\def\ps@pprintTitle{%
 \let\@oddhead\@empty
 \let\@evenhead\@empty
 \def\@oddfoot{\centerline{\thepage}}%
 \let\@evenfoot\@oddfoot}
\newsavebox{\foobox}
\newcolumntype{M}[1]{>{\centering\arraybackslash}m{#1}}
\numberwithin{equation}{section}
\theoremstyle{plain}
\newtheorem{thm}{\protect\theoremname}[section]
\theoremstyle{plain*}
\newtheorem*{thm*}{\protect\theoremname}
\theoremstyle{plain}
\newtheorem{lem}[thm]{\protect\lemmaname}
\theoremstyle{plain*}
\newtheorem*{lem*}{\protect\lemmaname}  
  \theoremstyle{plain}
  \newtheorem{prop}[thm]{\protect\propositionname}
    \theoremstyle{plain*}
  \newtheorem*{prop*}{\protect\propositionname}
\theoremstyle{remark}
\theoremstyle{remark*}
\newtheorem*{question*}{Question} 
\theoremstyle{remark}
\newtheorem{rem}[thm]{\protect\remarkname}
\theoremstyle{remark*}
\newtheorem*{rem*}{\protect\remarkname}
\theoremstyle{remark}
\theoremstyle{remark*}
\newtheorem*{example*}{\protect\examplename}
\theoremstyle{plain}
\newtheorem{cor}[thm]{\protect\corollaryname}
\providecommand{\corollaryname}{Corollary}
\theoremstyle{definition}
\theoremstyle{plain} 
\newcommand{\thistheoremname}{}
\newtheorem{genericthm}[thm]{\thistheoremname}
\newtheorem*{genericthm*}{\thistheoremname}
\newenvironment{namedthm*}[1]
  {\renewcommand{\thistheoremname}{#1}%
   \begin{genericthm*}}
  {\end{genericthm*}}
 \providecommand{\lemmaname}{Lemma}
  \providecommand{\propositionname}{Proposition}
  \providecommand{\remarkname}{Remark}
\providecommand{\theoremname}{Theorem}
\newcommand{\R}{\mathbb{R}}
\newcommand{\N}{\mathbb{N}}
\newcommand{\Q}{\mathbb{Q}}
\newcommand{\Z}{\mathbb{Z}}
\newcommand\precdot{\mathrel{\ooalign{$\prec$\cr
  \hidewidth\raise0ex\hbox{$\cdot\mkern0.5mu$}\cr}}}
\newcommand\preceqdot{\mathrel{\ooalign{$\preceq$\cr
  \hidewidth\raise0.225ex\hbox{$\cdot\mkern0.5mu$}\cr}}}
\begin{document}
\begin{frontmatter}
\title{Mixing and rigidity along asymptotically linearly independent sequences}

\author[add1]{Rigoberto Zelada}
\ead{zeladacifuentes.1@osu.edu}

\address[add1]{Department of Mathematics. The Ohio State University, Columbus, OH 43210, USA}

\begin{abstract}
We utilize Gaussian measure preserving systems to prove the existence and genericity of Lebesgue measure preserving transformations $T:[0,1]\rightarrow [0,1]$ which exhibit both mixing and rigidity behavior along families of \emph{asymptotically linearly independent} sequences. 
Let  $\lambda_1,...,\lambda_N\in[0,1]$ and let $\phi_1,...,\phi_N:\mathbb N\rightarrow\mathbb Z$ be asymptotically linearly independent (i.e. for any $(a_1,...,a_N)\in\mathbb Z^N\setminus\{\vec 0\}$, $\lim_{k\rightarrow\infty}|\sum_{j=1}^Na_j\phi_j(k)|=\infty$). Then the class of invertible Lebesgue measure preserving transformations $T:[0,1]\rightarrow[0,1]$ for which there exists a sequence $(n_k)_{k\in\N}$ in $\N$ with 
$$\lim_{k\rightarrow\infty}\mu(A\cap T^{-\phi_j(n_k) }B)= (1-\lambda_j)\mu(A\cap B)+\lambda_j\mu(A)\mu(B),$$ 
for any measurable $A,B\subseteq [0,1]$ and any $j\in\{1,...,N\}$, is generic.\\
This result is a refinement of a result due  to A. M. St{\"e}pin \cite[Theorem 2]{stepin1987spectral} and a generalization of a result due to V. Bergelson, S. Kasjan, and M. Lema{\'n}czyk \cite[Corollary F]{BKLUltrafilterPoly}.
\end{abstract}
\begin{keyword}
Ergodic theory, Gaussian systems, generic transformation, rigidity sequence.  
\end{keyword}
\end{frontmatter}
\tableofcontents
\section{Introduction}\label{Section1Gaussian}
Let $([0,1],\mathcal B,\mu)$ be the  probability space
where $\mathcal B=\text{Borel}([0,1])$ and $\mu$ is the Lebesgue measure. Denote 
by  $\text{Aut}([0,1],\mathcal B,\mu)$  the set of invertible measure preserving transformations $T:[0,1]\rightarrow [0,1]$ endowed with the weak topology (i.e. the topology defined on $\text{Aut}([0,1],\mathcal B,\mu)$ by $T_n\rightarrow T$ if and only if for each $f\in L^2(\mu)$, $\|T_nf-Tf\|_{L^2}\rightarrow 0$). With this topology $\text{Aut}([0,1],\mathcal B,\mu)$ is a completely metrisable space.\\

St{\"e}pin proved in \cite[Theorem 2]{stepin1987spectral} that, given $\lambda\in[0,1]$, the set of transformations $T\in \text{Aut}([0,1],\mathcal B,\mu)$ for which there exists an increasing sequence $(n_k)_{k\in\N}$ in $\N=\{1,2,...\}$ such that for any $A,B\in\mathcal B$,
\begin{equation}\label{0.StepinResult}
\lim_{k\rightarrow\infty}\mu(A\cap T^{-n_k}B)=(1-\lambda)\mu(A\cap B)+\lambda\mu(A)\mu(B),
\end{equation}
is a dense $G_\delta$ set in $\text{Aut}([0,1],\mathcal B,\mu)$. A refinement of St{\"e}pin's theorem, which is a special case of  \cref{0.IndependentPolysStepinResult} below, states that for any (strictly) monotone sequence $\phi:\N\rightarrow\Z$ and any $\lambda\in [0,1]$,
the set $\mathcal G(\phi,\lambda)$ consisting of all transformations $T\in \text{Aut}([0,1],\mathcal B,\mu)$ for which there exists an increasing sequence $(n_k)_{k\in\N}$ in $\N$ such that for any $A,B\in\mathcal B$, 
$$\lim_{k\rightarrow\infty}\mu(A\cap T^{-\phi(n_k)}B)=(1-\lambda)\mu(A\cap B)+\lambda\mu(A)\mu(B),$$
is again dense $G_\delta$.\\

It follows that for any $\lambda_1,\lambda_2\in[0,1]$ and any monotone sequences $\phi_1,\phi_2:\N\rightarrow\Z$, the set $\mathcal G(\phi_1,\lambda_1)\cap \mathcal G(\phi_2,\lambda_2)$ is residual (i.e. it contains a dense $G_\delta$ set). Thus, there exists $T\in\text{Aut}([0,1],\mathcal B,\mu)$ such that for some increasing sequences $(n^{(1)}_k)_{k\in\N}$ and $(n^{(2)}_k)_{k\in\N}$ in $\N$ and any $A,B\in\mathcal B$,
\begin{equation}\label{0.FirstExpresion}
\lim_{k\rightarrow\infty}\mu(A\cap T^{-\phi_1(n^{(1)}_k)}B)=(1-\lambda_1)\mu(A\cap B)+\lambda_1\mu(A)\mu(B)
\end{equation}
and
\begin{equation}\label{0.SecondExpression}
\lim_{k\rightarrow\infty}\mu(A\cap T^{-\phi_2(n^{(2)}_k)}B)=(1-\lambda_2)\mu(A\cap B)+\lambda_2\mu(A)\mu(B).
\end{equation}
Note that depending on our choice of $\lambda_1$, $\lambda_2$, $\phi_1$, and $\phi_2$, it might be the case that for every $T\in\mathcal G(\phi_1,\lambda_1)\cap \mathcal G(\phi_2,\lambda_2)$, the sequences $(n^{(1)}_k)_{k\in\N}$ and $(n^{(2)}_k)_{k\in\N}$ in \eqref{0.FirstExpresion} and \eqref{0.SecondExpression} must be different.\\

For instance, when $\lambda_1=0$, $\lambda_2=1$, $\phi_1(n)=n$, and $\phi_2(n)=2n$ for each $n\in\N$, we have that if \eqref{0.FirstExpresion} and \eqref{0.SecondExpression} hold for some $T\in\mathcal G(\phi_1,\lambda_1)\cap \mathcal G(\phi_2,\lambda_2)$, then 
$$\lim_{k\rightarrow\infty}|n^{(1)}_k-n^{(2)}_k|=\infty.$$
To see this, suppose for sake of contradiction that $\lim_{j\rightarrow\infty}n^{(1)}_{k_j}-n^{(2)}_{k_j}=a\in\Z$ for some increasing sequence $(k_j)_{j\in\N}$ in $\N$. Since $\lambda_1=0$,
$$\lim_{j\rightarrow\infty}\mu(A\cap T^{-2n^{(1)}_{k_j}}B)=\mu(A\cap B),$$
for any $A,B\in\mathcal B$. 
Picking $A\in\mathcal B$ with $\mu(A)\in (0,1)$ and letting $B=T^{-2a}A$, we obtain
\begin{equation*}\label{0.BadLimit}
\mu^2(A)=\mu(A)\mu(B)=\lim_{j\rightarrow\infty}\mu(A\cap T^{-2n^{(2)}_{k_j}}B)
=\lim_{j\rightarrow\infty}\mu(A\cap T^{-2n^{(1)}_{k_j}+2a}B)=\mu(A\cap T^{2a}B)=\mu(A).
\end{equation*}
Noting that $\mu^2(A)\neq \mu(A)$, we reach the desired contradiction.\\ 

The following result, which is  a consequence of \cite[Corollary F]{BKLUltrafilterPoly}, provides sufficient conditions on sequences of the form 
$(v_1(k))_{k\in\N}$ and $(v_2(k))_{k\in\N}$, where $v_1,v_2 \in\Z[x]$, to ensure the existence of a $T\in \text{Aut}([0,1],\mathcal B,\mu)$ such that \eqref{0.FirstExpresion} and \eqref{0.SecondExpression} hold with $(n^{(1)}_k)_{k\in\N}=(n^{(2)}_k)_{k\in\N}$ and arbitrary $\lambda_1,\lambda_2\in\{0,1\}$. We denote the set of all (strictly) increasing sequences $(n_k)_{k\in\N}$ in $\N$ by $\N^\N_\infty$.
\begin{thm}\label{0.IndependentPolysBKL}
Let $N\in\N$, let $\lambda_1,...,\lambda_N\in\{0,1\}$, and let  $v_1,...,v_N\in\Z[x]$ be  $\Q$-linearly independent polynomials such that $v_j(0)=0$ for each $j\in\{1,...,N\}$. Then the set
\begin{multline*}
    \{T\in\text{Aut}([0,1],\mathcal B,\mu)\,|\,\exists (n_k)_{k\in\N}\in\N^\N_\infty\,\forall j\in\{1,...,N\}\,\forall A,B\in\mathcal B,\\
    \lim_{k\rightarrow\infty}\mu(A\cap T^{-v_j(n_k)}B)=(1-\lambda_j)\mu(A\cap B)+\lambda_j\mu(A)\mu(B)\}
\end{multline*}
is a dense $G_\delta$ set.
\end{thm}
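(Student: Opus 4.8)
The plan is to follow the two standard steps behind any St\"epin-type genericity statement, writing $\mathcal G=\mathcal G(v_1,\dots,v_N;\lambda_1,\dots,\lambda_N)$ for the set in the statement: first show that $\mathcal G$ is a $G_\delta$ subset of $\text{Aut}([0,1],\mathcal B,\mu)$, and then show that it is dense, the density being reduced to the \emph{existence} of a single transformation in $\mathcal G$, which I would import from \cite[Corollary F]{BKLUltrafilterPoly}.

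For the $G_\delta$ part I would fix a countable subalgebra $\mathcal A=\{E_1,E_2,\dots\}$ of $\mathcal B$ that is dense for the pseudometric $(A,B)\mapsto\mu(A\triangle B)$ (e.g.\ finite unions of dyadic intervals). Since each $T$ is measure preserving and $|\mu(A\cap T^{-m}B)-\mu(E_a\cap T^{-m}E_b)|\le\mu(A\triangle E_a)+\mu(B\triangle E_b)$, the limit relation for all $A,B\in\mathcal B$ is equivalent to the same relation for all $A,B\in\mathcal A$, by a routine $\varepsilon$-approximation (which also controls $c_j(A,B):=(1-\lambda_j)\mu(A\cap B)+\lambda_j\mu(A)\mu(B)$). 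Setting
\[
V_{n,\ell}\;=\;\bigcap_{j=1}^{N}\ \bigcap_{1\le a,b\le \ell}\Big\{\,T\ :\ \big|\mu\!\left(E_a\cap T^{-v_j(n)}E_b\right)-c_j(E_a,E_b)\big|<\tfrac1\ell\,\Big\},
\]
each $V_{n,\ell}$ is weakly open because $T\mapsto\mu(E_a\cap T^{-m}E_b)=\langle \mathbf 1_{E_a},U_T^{m}\mathbf 1_{E_b}\rangle$ is continuous for fixed $m$ (here $U_T$ is the Koopman operator). A diagonalization — crucially, $V_{n,\ell}$ imposes all $N$ conditions simultaneously at a single $n$, and one extracts a strictly increasing sequence from the $n$'s produced — then gives $\mathcal G=\bigcap_{\ell\in\N}\bigcup_{n\ge \ell}V_{n,\ell}$, so $\mathcal G$ is $G_\delta$. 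The one point to be careful about is exactly this interchange of the ``$\exists$ strictly increasing $(n_k)$'' quantifier with the countable intersection, which the sets $V_{n,\ell}$ are designed to accommodate.

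For density, I would first observe that $\mathcal G$ is invariant under conjugation: if $T\in\mathcal G$ is witnessed by $(n_k)_{k\in\N}$ and $S\in\text{Aut}([0,1],\mathcal B,\mu)$, then applying the defining limits to $S^{-1}A,\,S^{-1}B$ and using $\mu(S^{-1}A\cap S^{-1}B)=\mu(A\cap B)$ and $\mu(S^{-1}A)=\mu(A)$ shows $STS^{-1}\in\mathcal G$, again with the same $(n_k)_{k\in\N}$; and $\mathcal G$ is clearly invariant under measure-theoretic isomorphism, so it makes sense to populate it with a Gaussian automorphism transported to $[0,1]$ via the isomorphism of standard probability spaces. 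By \cite[Corollary F]{BKLUltrafilterPoly} such a (weakly mixing) transformation lies in $\mathcal G$; being weakly mixing it is aperiodic, and by the classical Halmos conjugacy lemma the conjugacy class of any aperiodic transformation is weakly dense in $\text{Aut}([0,1],\mathcal B,\mu)$. Hence $\mathcal G$ contains a weakly dense set, so $\mathcal G$ is dense; combined with the previous paragraph, $\mathcal G$ is a dense $G_\delta$.

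The only genuinely substantial input is \cite[Corollary F]{BKLUltrafilterPoly}: the construction of one system realizing the prescribed simultaneous mixing/rigidity pattern along a \emph{common} sequence $(v_j(n_k))_{k}$ is where the hypotheses ($\mathbb Q$-linear independence of the $v_j$, $v_j(0)=0$) are actually used, through a Gaussian/spectral argument producing a continuous measure on the circle whose Fourier transform behaves appropriately along a single sequence simultaneously in the independent ``directions'' $v_1,\dots,v_N$. Everything else in the present argument — the $G_\delta$ bookkeeping and the conjugation/Halmos density step — is soft, so I expect the spectral construction behind \cite[Corollary F]{BKLUltrafilterPoly} to be the only real obstacle (and one we are entitled to cite here).
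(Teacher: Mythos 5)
Your proposal is correct, and it is worth comparing it with how the paper actually obtains this statement. The paper never proves \cref{0.IndependentPolysBKL} in isolation: it is cited as a consequence of \cite[Corollary F]{BKLUltrafilterPoly}, and within the paper it is subsumed by \cref{0.IndependentPolysStepinResult}, which is proved by combining the existence theorem (\cref{0.MainResultExistence}, whose polynomial, zero-constant-term case is \cref{3.IndependentPoly}) with the interpolation theorem (\cref{0.MainInterpolationResult}). Your $G_\delta$ bookkeeping is essentially the paper's: the paper uses the open sets $\mathcal O(q,\ell)$ built from dyadic intervals and upgrades from dyadic rectangles to arbitrary $A,B\in\mathcal B$ via the $\pi$-$\lambda$ theorem, while you use a countable dense algebra and an $\varepsilon$-approximation; these are interchangeable, and your sets $V_{n,\ell}$ handle the ``common sequence'' quantifier correctly. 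The genuine divergence is in the density step. In the paper, density is proved inside \cref{5.MainInterpolationResult} by approximating $T_0$ by a cyclic permutation $R$ of dyadic intervals, conjugating $N+1$ pattern transformations close to $R$ (via \cref{5.ConjugacyLemma}), and gluing affinely rescaled copies of them on strips of relative measures $\lambda_t-\lambda_{t-1}$; this gluing is forced by the fact that for $\lambda_j\in(0,1)$ Condition C supplies no single transformation realizing the pattern. Since your statement has $\lambda_j\in\{0,1\}$, the target pattern itself lies in $\{0,1\}^N$, so your shortcut --- conjugation-invariance of the set plus Halmos's theorem that the conjugacy class of an aperiodic (here weakly mixing) transformation is weakly dense --- is exactly the degenerate case of the paper's construction in which only one $\delta_t$ is nonzero, and it is valid; it buys a much softer argument, but unlike the paper's route it does not extend to $\lambda_j\in[0,1]$ or yield \cref{0.IndependentPolysStepinResult}. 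Your reliance on \cite[Corollary F]{BKLUltrafilterPoly} for the existence of one weakly mixing transformation with a common witnessing sequence is legitimate and parallels the paper, which gives its own Gaussian/spectral construction of such a system in \cref{3.IndependentPoly} and transfers it to $[0,1]$ via (a slight modification of) \cref{1.EquivalenceOftypesOfSystems}.
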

\cref{0.IndependentPolysStepinResult} below, which we prove in Section \ref{Section6InterpolationResults}, extends \cref{0.IndependentPolysBKL} to any real numbers $\lambda_1,...,\lambda_N\in[0,1]$ and arbitrary \textbf{asymptotically linearly independent sequences} $\phi_1,...,\phi_N:\N\rightarrow\Z$. The sequences $\phi_1,...,\phi_N$ are asymptotically (linearly) independent if for any $\vec a=(a_1,...,a_N)\in\Z^N\setminus \{\vec 0\}$,
$$\lim_{n\rightarrow\infty}|\sum_{j=1}^Na_j\phi_j(n)|=\infty.$$
\begin{thm}\label{0.IndependentPolysStepinResult}
Let $N\in\N$ and let $\lambda_1,...,\lambda_N\in [0,1]$.  For any asymptotically independent sequences $\phi_1,...,\phi_N:\N\rightarrow\Z$, the set
\begin{multline*}
    \{T\in\text{Aut}([0,1],\mathcal B,\mu)\,|\,\exists (n_k)_{k\in\N}\in\N^\N_\infty\,\forall j\in\{1,...,N\}\,\forall A,B\in\mathcal B,\\
    \lim_{k\rightarrow\infty}\mu(A\cap T^{- \phi_j(n_k)}B)=(1-\lambda_j)\mu(A\cap B)+\lambda_j\mu(A)\mu(B)\}
\end{multline*}
is a dense $G_\delta$ set.
\end{thm}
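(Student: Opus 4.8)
The plan is to use the standard Baire-category strategy for producing generic transformations: exhibit the target set as a countable intersection of open dense sets.  First I would fix a countable base $\{A_1,A_2,\dots\}$ for the measure algebra of $([0,1],\mathcal B,\mu)$ (e.g.\ dyadic intervals), and for each $m\in\N$, each finite $F\subseteq\{1,\dots,m\}$-indexed tuple of basic sets, and each $\varepsilon=1/m$, define
\begin{multline*}
\mathcal U_{m}=\bigl\{T\in\text{Aut}([0,1],\mathcal B,\mu)\,\big|\,\exists\,n>m\ \forall j\in\{1,\dots,N\}\ \forall i,i'\le m,\\
\bigl|\mu(A_i\cap T^{-\phi_j(n)}A_{i'})-(1-\lambda_j)\mu(A_i\cap A_{i'})-\lambda_j\mu(A_i)\mu(A_{i'})\bigr|<1/m\bigr\}.
\end{multline*}
Continuity of $T\mapsto\mu(A_i\cap T^{-\phi_j(n)}A_{i'})$ in the weak topology (for each \emph{fixed} $n$) makes each $\mathcal U_m$ open, and by a diagonal argument $\bigcap_m\mathcal U_m$ is exactly the set in the statement.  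So the whole theorem reduces to showing each $\mathcal U_m$ is dense.

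For density, the idea is the one behind St\"epin's theorem and \cref{0.IndependentPolysBKL}: it suffices to approximate an arbitrary $T_0\in\text{Aut}([0,1],\mathcal B,\mu)$ (weakly, on the finitely many basic sets up to stage $m$) by a transformation $T$ that \emph{genuinely} has the desired multiple asymptotic behavior — for then $T\in\bigcap_m\mathcal U_m$ a fortiori, and since conjugates of a fixed system are dense (the weak closure of the conjugacy class of any aperiodic/ergodic transformation, in particular of any weakly mixing one, is all of $\text{Aut}$, by the Halmos--Rokhlin--type conjugacy lemma), it is enough to produce \emph{one} transformation $T$ with the property and then note its conjugates meet every $\mathcal U_m$.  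Concretely I would take $T$ to be a suitable Gaussian measure preserving system: given a sequence $(n_k)$ along which $\phi_j(n_k)$ should witness the $\lambda_j$-behavior, one wants a (real, symmetric) spectral measure $\sigma$ on the circle such that $\widehat\sigma(\phi_j(n_k))\to\lambda_j$ for every $j$ simultaneously, where $\widehat\sigma$ denotes the Fourier coefficients; the Gaussian system $T=T_\sigma$ then satisfies, by the Gaussian covariance/second-moment computation, $\mu(A\cap T^{-r}B)\to(1-\lambda)\mu(A\cap B)+\lambda\mu(A)\mu(B)$ along any sequence $r=r_k$ with $\widehat\sigma(r_k)\to\lambda$, uniformly over $A,B$ in the algebra (this is exactly the mechanism by which rigidity corresponds to $\widehat\sigma(r_k)\to1$ and mixing to $\widehat\sigma(r_k)\to0$).

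The main obstacle — and the place where asymptotic linear independence is used — is the simultaneous spectral realization: constructing a single symmetric probability measure $\sigma$ on $\mathbb T$ and an increasing sequence $(n_k)$ with $\widehat\sigma(\phi_j(n_k))\to\lambda_j$ for all $j=1,\dots,N$ at once.  The natural approach is to build $\sigma$ as a weak-$*$ limit (or infinite convex combination) of finitely supported measures $\tfrac12(\delta_{\theta}+\delta_{-\theta})$ at carefully chosen frequencies $\theta$, using the independence hypothesis to decouple the constraints: because $\lim_{k}|\sum_j a_j\phi_j(k)|=\infty$ for every nonzero integer vector $\vec a$, the finite-dimensional ``orbit'' vectors $(\phi_1(n),\dots,\phi_N(n))$ become, for large $n$, equidistributed-like / jointly independent modulo any fixed sublattice, so one can pick $\theta$'s (or a sequence $(n_k)$ together with a measure) forcing each coordinate Fourier coefficient to its prescribed target $\lambda_j$ without the choices interfering.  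Equivalently, one invokes a Weyl/equidistribution or Helson-type argument: asymptotic independence guarantees that along a suitable subsequence $(n_k)$ the points $(\phi_1(n_k)t_1,\dots,\phi_N(n_k)t_N)$ behave independently for generic $(t_1,\dots,t_N)$, letting us pass from $N$ separate single-target constructions (each a minor variant of St\"epin's) to a joint one.  I expect this decoupling lemma — phrased as ``for asymptotically independent $\phi_j$ and any $\lambda_j\in[0,1]$ there is a symmetric $\sigma$ on $\mathbb T$ and $(n_k)\in\N^\N_\infty$ with $\widehat\sigma(\phi_j(n_k))\to\lambda_j$ for all $j$'' — to be the technical heart, with the remaining steps (Gaussian covariance computation, openness via weak continuity at fixed $n$, density via the conjugacy lemma, diagonalization) being routine.
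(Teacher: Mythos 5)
Your Baire-category frame (open sets defined by finitely many dyadic sets at a fixed time $n$, continuity of $T\mapsto\mu(E\cap T^{-\phi_j(n)}F)$ in the weak topology, and the identification of the countable intersection with the target set) matches the paper and is fine. The genuine gap is in your mechanism for intermediate $\lambda_j$: it is \emph{not} true that a Gaussian system whose spectral measure satisfies $\widehat\sigma(\phi_j(n_k))\to$ an intermediate value yields $\mu(A\cap T^{-\phi_j(n_k)}B)\to(1-\lambda_j)\mu(A\cap B)+\lambda_j\mu(A)\mu(B)$. What such spectral convergence gives (via compactness of Gaussian self-joinings, as in Section 2 of the paper) is that $\Delta_{\phi_j(n_k)}$ converges to the \emph{Gaussian} self-joining whose cross-covariances are the scaled ones; the value of that joining on a product set $A\times B$ is a nonlinear (Gaussian-copula type) function of the correlation, not the affine interpolation. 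Concretely, if $f$ is a single standard Gaussian coordinate and $A=B=\{f>0\}$, the Gaussian joining with correlation $c$ assigns $A\times B$ the mass $\tfrac14+\arcsin(c)/(2\pi)$, whereas your target is $\tfrac14+c/4$. Equivalently, $(1-\lambda)\Delta_0+\lambda\,\gamma\otimes\gamma$ is not a Gaussian joining for $\lambda\in(0,1)$, so it can never arise as such a limit. The spectral-to-dynamical dictionary you invoke works only at the extremes $\lambda\in\{0,1\}$ (and even there one needs $\widehat\sigma(\phi_j(n_k)+m)$ to converge for \emph{every} shift $m$, not just $m=0$).

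This also undermines your reduction ``produce one transformation with the exact property and then use density of its conjugacy class'': the reduction itself is sound, but your only proposed way of producing such a transformation is the faulty Gaussian one, and the natural repair --- a direct sum of the extreme systems on invariant pieces of measures $\lambda_t-\lambda_{t-1}$ --- fails to have the exact property for all Borel sets (take $A=B$ equal to one invariant piece: the limit is $\mu(A)$, not $(1-\lambda_j)\mu(A)+\lambda_j\mu(A)^2$). The paper gets around this with a two-step argument. First, \cref{0.MainResultExistence} constructs, via a Weyl-type equidistribution lemma close in spirit to your decoupling idea, Gaussian systems only for the $2^N$ extreme vectors $\vec\xi\in\{0,1\}^N$ along one common sequence $(n_k)$. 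Second, the interpolation theorem \cref{0.MainInterpolationResult} proves density of each open set directly: approximate $T_0$ by a cyclic permutation $R$ of dyadic intervals of rank $\ell'$, bring conjugates of the extreme transformations $\partial$-close to $R$, and glue them, rescaled onto subintervals of relative lengths $\lambda_t-\lambda_{t-1}$ inside each dyadic interval; this realizes the affine combination \emph{exactly on the finite dyadic algebra}, which is all the open set requires (no single transformation with the global property is ever exhibited outside the Baire-category conclusion). Without some such interpolation step, your argument does not go through for $\lambda_j\in(0,1)$.
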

We will now formulate two results which are needed for the derivation  of \cref{0.IndependentPolysStepinResult}  (see \cref{0.MainInterpolationResult} and \cref{0.MainResultExistence} below).\\

The first of these results  is proved by utilizing a modified version of the "interpolation" techniques introduced in \cite{stepin1987spectral} and can be stated as follows:
\begin{thm}\label{0.MainInterpolationResult}
Let $N\in\N$, let $\lambda_1,...,\lambda_N\in[0,1]$, and let $\phi_1,...,\phi_N:\N\rightarrow\Z$. Suppose that $\phi_1,...,\phi_N$ satisfy the  following condition: 
\begin{adjustwidth}{0.5cm}{0.5cm}
\underline{Condition C}: There exists an $(n_k)_{k\in\N}\in\N^\N_\infty$ such that for any $\vec \xi=(\xi_1,...,\xi_N)\in\{0,1\}^N$, there exists an aperiodic  $T_{\vec \xi}\in\text{Aut}([0,1],\mathcal B,\mu)$ with the property that for each $j\in\{1,...,N\}$ and any $A,B\in\mathcal B$,
\begin{equation}\label{0.Mixing/RigidCondition}
\lim_{k\rightarrow\infty}\mu(A\cap T_{\vec \xi}^{- \phi_j(n_k)}B)=(1-\xi_j)\mu(A\cap B)+\xi_j\mu(A)\mu(B).
\end{equation}
\end{adjustwidth}
Then the set 
\begin{multline*}
\{T\in\text{Aut}([0,1],\mathcal B,\mu)\,|\exists (k_\ell)_{\ell\in\N}\in\N^\N_\infty\,\forall j\in\{1,...,N\}\,\forall A,B\in\mathcal B,\\
\lim_{\ell\rightarrow\infty}\mu(A\cap T^{- \phi_j(n_{k_\ell})}B)=(1-\lambda_j)\mu(A\cap B)+\lambda_j\mu(A)\mu(B)\}
\end{multline*}
is a dense $G_\delta$ set.
\end{thm}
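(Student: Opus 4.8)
The plan is to treat the two halves of the statement separately: that the set --- call it $\mathcal G$ --- is a $G_\delta$ is soft, while its density will be reduced, via conjugation, to the construction of a single aperiodic member, which is where the ``interpolation'' enters.

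For the $G_\delta$ part I would fix a countable subalgebra $\mathcal D$ of $\mathcal B$ that is dense in $(\mathcal B,\mu)$, enumerate $\mathcal D\times\mathcal D=\{(A_i,B_i)\}_{i\in\N}$, put $c_{i,j}=(1-\lambda_j)\mu(A_i\cap B_i)+\lambda_j\mu(A_i)\mu(B_i)$, and observe that a $3\varepsilon$-argument transfers the required limits from the pairs $(A_i,B_i)$ to all measurable pairs along the same sequence. A diagonal extraction then rewrites
$$\mathcal G=\bigcap_{\varepsilon\in\Q^{+}}\ \bigcap_{L\in\N}\ \bigcap_{p\in\N}\ \bigcup_{k\ge p}\Big\{\,T\ :\ \big|\mu(A_i\cap T^{-\phi_j(n_k)}B_i)-c_{i,j}\big|<\varepsilon\ \text{ for all } i\le L,\ j\le N\,\Big\}.$$
Each braced set is weakly open because $T\mapsto\mu(A\cap T^{-m}B)$ is weakly continuous for every fixed $m\in\Z$: for $m\ge0$ from continuity of $T\mapsto U_T$ into the strong operator topology plus the fact that strong operator limits of contractions pass through any fixed power, and for $m<0$ also using that inversion is weakly continuous on $\text{Aut}([0,1],\mathcal B,\mu)$. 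Hence $\mathcal G$ is a $G_\delta$.

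For density I would first note that $\mathcal G$ is conjugation-invariant: if $T\in\mathcal G$ is witnessed by $(k_\ell)$ and $R\in\text{Aut}([0,1],\mathcal B,\mu)$, then, since $R$ preserves $\mu$, $\mu\big(A\cap(RTR^{-1})^{-\phi_j(n_{k_\ell})}B\big)=\mu\big(R^{-1}A\cap T^{-\phi_j(n_{k_\ell})}R^{-1}B\big)\to(1-\lambda_j)\mu(A\cap B)+\lambda_j\mu(A)\mu(B)$, so $RTR^{-1}\in\mathcal G$ with the same witness; by Halmos's conjugacy lemma the conjugates of any aperiodic transformation are weakly dense, so it suffices to exhibit one aperiodic $T\in\mathcal G$. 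Fix weights $w_{\vec\xi}\ge0$ ($\vec\xi\in\{0,1\}^N$) with $\sum_{\vec\xi}w_{\vec\xi}=1$ and $\sum_{\vec\xi:\,\xi_j=1}w_{\vec\xi}=\lambda_j$ for each $j$ (e.g.\ $w_{\vec\xi}=\prod_j\lambda_j^{\xi_j}(1-\lambda_j)^{1-\xi_j}$). Using the aperiodic $T_{\vec\xi}$ and the common sequence $(n_k)$ supplied by Condition~C, I would build $T$ by cutting and stacking, as a limit of periodic transformations, together with a subsequence $(k_\ell)$ of $(n_k)$, so that at stage $\ell$ the stage-$\ell$ tower splits into columns $L_\ell^{\vec\xi}$ of measure $\approx w_{\vec\xi}$ with: (i) $L_\ell^{\vec\xi}$ approximately $T^m$-invariant for $|m|\le H_\ell$, where $H_\ell\to\infty$ and $H_\ell\ge\max_j|\phi_j(n_{k_\ell})|$; (ii) inside $L_\ell^{\vec\xi}$ the first $H_\ell$ iterates of $T$ copy those of a tall Rokhlin tower of $T_{\vec\xi}$, so that the correlations $\mu(A\cap T^{-\phi_j(n_{k_\ell})}B)$ computed within $L_\ell^{\vec\xi}$ agree, up to an error $\varepsilon_\ell\to0$ (for a suitable choice of $k_\ell$), with those dictated by \eqref{0.Mixing/RigidCondition} for $T_{\vec\xi}$; and (iii) $\mathbbm{1}_{L_\ell^{\vec\xi}}\rightharpoonup w_{\vec\xi}$ weakly in $L^2(\mu)$ as $\ell\to\infty$, arranged by spreading the columns ever more drastically from one stage to the next, in the spirit of Ornstein's mixing constructions. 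Granting (i)--(iii), for fixed measurable $A,B$ and fixed $j$, decomposing $A\cap T^{-\phi_j(n_{k_\ell})}B$ along the columns (valid up to $o(1)$ by (i)) and using (ii) gives
$$\mu\big(A\cap T^{-\phi_j(n_{k_\ell})}B\big)=\sum_{\vec\xi}\Big[(1-\xi_j)\,\mu(A\cap B\cap L_\ell^{\vec\xi})+\xi_j\,w_{\vec\xi}^{-1}\,\mu(A\cap L_\ell^{\vec\xi})\,\mu(B\cap L_\ell^{\vec\xi})\Big]+o(1),$$
whose right-hand side, by (iii), tends to $\sum_{\vec\xi}\big[(1-\xi_j)w_{\vec\xi}\mu(A\cap B)+\xi_j w_{\vec\xi}\mu(A)\mu(B)\big]=(1-\lambda_j)\mu(A\cap B)+\lambda_j\mu(A)\mu(B)$. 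Since $H_\ell\to\infty$ forces $T$ to be aperiodic, $T\in\mathcal G$, and density follows.

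The hard part will be the cutting-and-stacking construction, specifically making (i)--(iii) coexist: (i) and (ii) pull the columns $L_\ell^{\vec\xi}$ toward being rigid --- nearly invariant and carrying the entire $T_{\vec\xi}$-dynamics out to the horizon $H_\ell$ --- while (iii) demands that asymptotically they become invisible; these are reconcilable only because the horizon is finite at each finite stage whereas weak triviality is a limiting statement, so the columns can be re-shuffled increasingly drastically as $\ell$ grows. Two further points need care: propagating all approximation errors through the induction so that the limiting $T$ satisfies the exact (not merely approximate) correlations along $(k_\ell)$, and arranging that the single subsequence $(k_\ell)$ works simultaneously for all of $\phi_1,\dots,\phi_N$ --- which is exactly what a common sequence $(n_k)$ in Condition~C makes possible, since one Rokhlin tower of $T_{\vec\xi}$ of height $\ge\max_j|\phi_j(n_{k_\ell})|$ captures all $N$ limits of \eqref{0.Mixing/RigidCondition} at once. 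Verifying aperiodicity of the limiting transformation is routine.
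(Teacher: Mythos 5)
Your $G_\delta$ argument is fine and matches the paper's (the paper uses dyadic intervals plus a $\pi$-$\lambda$ argument where you use a countable dense algebra plus a $3\varepsilon$-argument), and your observation that $\mathcal G$ is conjugation-invariant, so that by Halmos's conjugacy lemma it would suffice to produce a single aperiodic member, is correct in principle. But the density half, as proposed, has a genuine gap exactly at its core step (ii). Condition C gives the limits \eqref{0.Mixing/RigidCondition} only pointwise in the fixed pair $(A,B)$, with no uniformity over sets; in your cutting-and-stacking scheme the identification of $A\cap L_\ell^{\vec\xi}$ with a subset of the $T_{\vec\xi}$-space is made through a copying map that changes at every stage $\ell$, so the sets to which you would apply \eqref{0.Mixing/RigidCondition} are not fixed, and nothing in Condition C guarantees the claimed $\varepsilon_\ell$-agreement of column correlations for arbitrary (or even a fixed countable dense family of) measurable $A,B$. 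There is also a circularity to break: $k_\ell$ must be chosen after the stage-$\ell$ pullback sets are fixed, yet the tower height $H_\ell\ge\max_j|\phi_j(n_{k_\ell})|$ used to define the copying (and hence the pullbacks) depends on $k_\ell$; and one must further check that later stages of the construction perturb $T$ so little that the stage-$\ell$ correlation estimates survive exactly in the limit. None of this is impossible in principle (the set $\mathcal G$ is indeed residual, hence contains aperiodic elements), but building one such $T$ directly is essentially the full strength of the theorem, and your sketch defers precisely that hardest part.

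The paper's proof is organized so that this difficulty never arises: it never constructs an element of $\mathcal G$ at all. It writes $\mathcal G=\bigcap_{q,\ell}\mathcal O(q,\ell)$ with $\mathcal O(q,\ell)$ open and involving only dyadic sets of rank $\le\ell$, and proves each $\mathcal O(q,\ell)$ is dense by a one-shot, finite-scale splicing: approximate the target $T_0$ by a cyclic permutation $R$ of dyadic intervals of rank $\ell'$, use the conjugacy lemma to place the (sorted, threshold-type) transformations $T_1,\dots,T_{N+1}$ from Condition C within $\epsilon/4$ of $R$, and glue affinely rescaled copies of them onto sub-slices of relative widths $\lambda_t-\lambda_{t-1}$ inside every rank-$\ell'$ interval. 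Because these slices are exactly invariant under the spliced $T$ and every rank-$\le\ell$ dyadic set decomposes exactly along them, the interpolation identity holds exactly in the limit along the whole sequence $(n_k)$ for those coarse sets --- which is all that density of $\mathcal O(q,\ell)$ requires --- and Baire's theorem finishes. So where you need uniform control over all sets along one subsequence for a single limit transformation, the paper only ever needs pointwise limits for finitely many fixed sets per open set, which Condition C supplies directly.
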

To help the  reader appreciate the content of  \cref{0.MainInterpolationResult}, let us consider the case $N=1$. Fix an increasing sequence $(m_k)_{k\in\N}$ in $\N$ and set $\phi_1(k)=m_k$ for each  $k\in\N$. We claim that there exists an increasing sequence  $(n_k)_{k\in\N}$ in $\N$ for which $\phi_1$ satisfies Condition C. In other words there are transformations $T_0$ and $T_1$ such that for any $A,B\in\mathcal B$,
\begin{equation}\label{0.RigidTransformation}
\lim_{k\rightarrow\infty}\mu(A\cap T_0^{-\phi_1(n_k)}B)=\mu(A\cap B)
\end{equation}
and
\begin{equation}\label{0.MixingTransformation}
\lim_{k\rightarrow\infty}\mu(A\cap T_1^{-\phi_1(n_k)}B)=\mu(A)\mu(B).
\end{equation}
Note that the set $\bigcap_{q\in\N}\bigcup_{k\in\N}\{\alpha\in\R\,|\,|e^{2\pi i\phi_1(k)\alpha}-1|<\frac{1}{q}\}$ is a dense $G_\delta$ subset of $\R$. Thus, we can pick an irrational $\alpha$ and an increasing sequence $(n_k)_{k\in\N}$ such that  $\lim_{k\rightarrow\infty}(\phi_1(n_k)\alpha\,\text{mod}\,1)=0$. Letting  $T_0$ be the (aperiodic) transformation defined by $T_0(x)=(x+\alpha)\,\text{mod}\,1$ we have that $T_0$  satisfies \eqref{0.RigidTransformation}. 
Our claim now follows by noting that any strongly mixing transformation $T\in\text{Aut}([0,1],\mathcal B,\mu)$\footnote{
Let $(X,\mathcal F,\nu)$ be a probability space. A measure preserving transformation $T:X\rightarrow X$ is called strongly mixing if for any $A,B\in\mathcal F$,
$$\lim_{n\rightarrow\infty}\nu(A\cap T^{-n}B)=\nu(A)\nu(B).$$
} is aperiodic and satisfies \eqref{0.MixingTransformation}.\\  

The above discussion leads to the following corollary to  \cref{0.MainInterpolationResult}. (\cref{0.RefinementOfStepin} below is a  refinement of the result due  to St{\"e}pin mentioned above.)
\begin{cor}\label{0.RefinementOfStepin}
Let $(m_k)_{k\in\N}$ be an increasing sequence in $\N$ and let $\lambda\in[0,1]$. Then 
\begin{multline*}
\{T\in\text{Aut}([0,1],\mathcal B,\mu)\,|\,
\exists(k_\ell)_{\ell\in\N}\in\N^\N_\infty\,\forall A,B\in\mathcal B,\\
\lim_{\ell\rightarrow\infty}\mu(A\cap T^{-m_{k_\ell}}B)=(1-\lambda)\mu(A\cap B)+\lambda\mu(A)\mu(B)\}
\end{multline*}
is a dense $G_\delta$ set. 
\end{cor}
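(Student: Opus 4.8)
The plan is to obtain \cref{0.RefinementOfStepin} as the $N=1$ case of \cref{0.MainInterpolationResult}, applied with $\phi_1=(m_k)_{k\in\N}$ and $\lambda_1=\lambda$; thus essentially everything reduces to verifying that this single sequence satisfies Condition C. Granting that for the moment, \cref{0.MainInterpolationResult} produces a dense $G_\delta$ set $\mathcal G'$ of transformations $T$ admitting a sequence $(k_\ell)_{\ell\in\N}\in\N^\N_\infty$ with $\lim_{\ell\to\infty}\mu(A\cap T^{-m_{n_{k_\ell}}}B)=(1-\lambda)\mu(A\cap B)+\lambda\mu(A)\mu(B)$ for all $A,B\in\mathcal B$, where $(n_k)_{k\in\N}$ is the sequence supplied by Condition C. Since $\ell\mapsto n_{k_\ell}$ is itself a strictly increasing sequence in $\N$, every such $T$ belongs to the set $\mathcal G$ appearing in the statement, i.e.\ $\mathcal G'\subseteq\mathcal G$; and a standard argument (approximating arbitrary $A,B$ by members of a fixed countable subfamily of $\mathcal B$ that is dense for $\rho(A,B)=\mu(A\triangle B)$, and using metrisability of the weak topology) shows that $\mathcal G$ is itself $G_\delta$. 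Since $\mathcal G$ contains the dense set $\mathcal G'$ it is dense, and being also $G_\delta$ it is a dense $G_\delta$ set.

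It therefore remains to verify Condition C for $(m_k)_{k\in\N}$: I must exhibit one increasing sequence $(n_k)_{k\in\N}$ together with aperiodic $T_0,T_1\in\text{Aut}([0,1],\mathcal B,\mu)$ satisfying \eqref{0.Mixing/RigidCondition} (with $\phi_1$ in the role of $\phi_j$) for $\xi_1=0$ and $\xi_1=1$ respectively. For $T_0$ I would argue exactly as in the discussion preceding the corollary: the set $\bigcap_{q\in\N}\bigcup_{k\in\N}\{\alpha\in\R:|e^{2\pi i m_k\alpha}-1|<\tfrac1q\}$ is a dense $G_\delta$ subset of $\R$, so by Baire's theorem its intersection with the dense $G_\delta$ set $\R\setminus\Q$ is nonempty; choosing an irrational $\alpha$ in it and then an increasing sequence $(n_k)_{k\in\N}$ with $m_{n_k}\alpha\to0\pmod 1$, and taking $T_0(x)=(x+\alpha)\bmod 1$ (which is aperiodic since $\alpha$ is irrational), approximation by trigonometric polynomials gives $\|f\circ T_0^{-m_{n_k}}-f\|_{L^2}\to0$ for every $f\in L^2(\mu)$, whence $\lim_{k\to\infty}\mu(A\cap T_0^{-m_{n_k}}B)=\mu(A\cap B)$. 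For $T_1$ I would fix any aperiodic strongly mixing $T_1\in\text{Aut}([0,1],\mathcal B,\mu)$ (such a $T_1$ exists, e.g.\ a Bernoulli automorphism carried over to $[0,1]$ via a measure isomorphism; recall that strong mixing forces aperiodicity); since $(m_k)_{k\in\N}$ is increasing we have $m_{n_k}\to\infty$, so the definition of strong mixing yields $\lim_{k\to\infty}\mu(A\cap T_1^{-m_{n_k}}B)=\mu(A)\mu(B)$ for all $A,B\in\mathcal B$. This verifies Condition C and completes the proof.

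The real substance of the argument is packaged in \cref{0.MainInterpolationResult}; in the present reduction the only point needing care is that a \emph{single} increasing sequence $(n_k)_{k\in\N}$ must simultaneously serve as a rigidity sequence for $T_0$ and as a mixing sequence for $T_1$. This is harmless, since a strongly mixing transformation mixes along \emph{every} sequence of indices tending to infinity; hence $(n_k)_{k\in\N}$ may be chosen solely to accommodate the rigidity of the rotation $T_0$, after which $T_1$ works automatically. Beyond this observation the corollary is a direct specialisation of \cref{0.MainInterpolationResult}, so no further obstacle arises.
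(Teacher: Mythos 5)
Your proposal is correct and follows essentially the same route as the paper: the corollary is obtained from \cref{0.MainInterpolationResult} with $N=1$, verifying Condition C exactly as in the discussion preceding the corollary (an irrational rotation $T_0$ with $m_{n_k}\alpha\to 0 \bmod 1$ for the rigid case, and any strongly mixing transformation for the mixing case, which works along every sequence tending to infinity). Your extra remarks — that $\ell\mapsto n_{k_\ell}$ is again an increasing sequence so the theorem's set sits inside the corollary's set, and that the corollary's set is itself $G_\delta$ by the same dyadic-interval approximation used in the paper's proof of \cref{5.MainInterpolationResult} — are precisely the points the paper leaves implicit.
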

\begin{rem}
1. The special case of     \cref{0.RefinementOfStepin} corresponding to $\lambda=0$ gives an equivalent form  of Proposition 2.8 in \cite{BdJLRPublished} which states that given an increasing sequence $(m_k)_{k\in\N}$ in $\N$, the set 
\begin{equation*}
\{T\in\text{Aut}([0,1],\mathcal B,\mu)\,|\,
\exists(k_\ell)_{\ell\in\N}\in\N^\N_\infty\,\forall A,B\in\mathcal B,\,\lim_{\ell\rightarrow\infty}\mu(A\cap T^{-m_{k_\ell}}B)=\mu(A\cap B)\}
\end{equation*}
is residual. \\
2. The special case of     \cref{0.RefinementOfStepin} corresponding to $\lambda=1$ gives an equivalent form of the "folklore theorem" in  \cite[Proposition 2.14]{BdJLRArxivrigidity}, which states that given an increasing sequence $(m_k)_{k\in\N}$ in $\N$, the set 
\begin{equation*}
\{T\in\text{Aut}([0,1],\mathcal B,\mu)\,|\,
\exists(k_\ell)_{\ell\in\N}\in\N^\N_\infty\,\forall A,B\in\mathcal B,\,\lim_{\ell\rightarrow\infty}\mu(A\cap T^{-m_{k_\ell}}B)=\mu(A)\mu(B)\}
\end{equation*}
is residual. 
\end{rem}
As we will see below,  Condition C in \cref{0.MainInterpolationResult} is satisfied by any asymptotically independent sequences  $\phi_1,...,\phi_N:\N\rightarrow\Z$.  We remark in passing that for each  $N\geq 2$, there exist $\Q$-linearly dependent polynomials $v_1,...,v_N\in\Z[x]$ for which the (non-asymptotically independent) sequences $(\phi_j(k))_{k\in\N}=(v_j(k))_{k\in\N}$, $j\in\{1,...,N\}$, satisfy Condition C.  For instance, one can utilize the results in \cite{BKLUltrafilterPoly} to show that $\phi_1(n)=2n$ and $\phi_2(n)=3n$, $n\in\N$, satisfy Condition C. Moreover, one can deduce from \cite{BKLUltrafilterPoly}  that for any $N\geq 2$, the sequences 
$$\phi_j(n)=\left(\prod_{\{m\in\{1,...,2^N-2\}\,|\,j\in A_m\}}p_m\right)n,\,j\in\{1,...,N\},$$
where $A_1,...,A_{2^N-2}$ is an enumeration of the non-empty proper subsets of $\{1,...,N\}$ and $p_1,...,p_{2^N-2}$ are distinct prime numbers, satisfy Condition C (see also Section \ref{Section7OtherFamilies} of this paper).\\

The second result needed for the proof of \cref{0.IndependentPolysStepinResult} guarantees the existence of measure preserving transformations for which the sequences $\phi_1,...,\phi_N:\N\rightarrow\Z$ in \cref{0.IndependentPolysStepinResult} satisfy Condition C.
\begin{thm}\label{0.MainResultExistence}
Let $N\in\N$ and let $\phi_1,...,\phi_N:\N\rightarrow\Z$ be asymptotically independent sequences. Then there exists an increasing sequence $(n_k)_{k\in\N}$ in $\N$ such that for any $\vec \xi=(\xi_1,...,\xi_N)\in\{0,1\}^N$, there exists a weakly mixing\footnote{
Let $(X,\mathcal F,\nu)$ be a probability space. A measure preserving transformation $T:X\rightarrow X$ is called weakly mixing if for any $A,B\in\mathcal F$,
$$\lim_{N\rightarrow\infty}\frac{1}{N}\sum_{n=1}^N|\nu(A\cap T^{-n}B)-\nu(A)\nu(B)|=0.$$
Note that every weakly mixing transformation $S$ defined on $([0,1],\mathcal B,\mu)$ is aperiodic.
} 
$T_{\vec \xi}\in\text{Aut}([0,1],\mathcal B,\mu)$ with the property that for each $j\in\{1,...,N\}$ and any $A,B\in\mathcal B$,
\begin{equation}\label{0.KeyLimit}
\lim_{k\rightarrow\infty}\mu(A\cap T_{\vec \xi}^{- \phi_j(n_k)}B)=(1-\xi_j)\mu(A\cap B)+\xi_j\mu(A)\mu(B).
\end{equation}
\end{thm}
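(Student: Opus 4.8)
The plan is to construct each $T_{\vec\xi}$ as a Gaussian measure preserving system, using the fact that the spectral behavior of such systems is entirely controlled by a single finite measure on the circle. Recall that to any finite symmetric positive measure $\sigma$ on $\mathbb T$ one associates the Gaussian system $(X_\sigma,\mathcal B_\sigma,\mu_\sigma,T_\sigma)$, and that for this system and functions in the first chaos one has correlations governed by $\widehat\sigma$; in particular $\mu_\sigma(A\cap T_\sigma^{-n}B)\to\mu_\sigma(A)\mu_\sigma(B)$ along $(n_k)$ for \emph{all} $A,B$ precisely when $\widehat\sigma(n_k)\to 0$ (mixing along $(n_k)$), while $\mu_\sigma(A\cap T_\sigma^{-n_k}B)\to\mu_\sigma(A\cap B)$ for all $A,B$ precisely when $\widehat\sigma(n_k)\to\sigma(\mathbb T)$, i.e.\ the measure $\sigma$ is "rigid" along $(n_k)$. (One normalizes $\sigma(\mathbb T)=1$.) Moreover a Gaussian system is weakly mixing if and only if $\sigma$ is continuous (non-atomic), and every such system, realized on $([0,1],\mathcal B,\mu)$ via a measure isomorphism, lands in $\mathrm{Aut}([0,1],\mathcal B,\mu)$. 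So the problem reduces to a problem about a single measure: for each $\vec\xi\in\{0,1\}^N$ we need a continuous probability measure $\sigma_{\vec\xi}$ on $\mathbb T$ such that, along one common sequence $(n_k)$, $\widehat{\sigma_{\vec\xi}}(\phi_j(n_k))\to 1$ when $\xi_j=0$ and $\widehat{\sigma_{\vec\xi}}(\phi_j(n_k))\to 0$ when $\xi_j=1$.

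\textbf{Building the measures.} First I would record the two "atomic building blocks" in the spectral picture. For rigidity along $(\phi_j(n_k))$ one wants $\widehat\sigma(\phi_j(n_k))\to 1$; the cleanest way is to take $\sigma$ supported on a Kronecker-type set: using that $\bigcap_{q}\bigcup_{k}\{\alpha:|e^{2\pi i \phi_j(k)\alpha}-1|<1/q\}$ is a dense $G_\delta$, and more generally, since $\phi_1,\dots,\phi_N$ are \emph{asymptotically independent}, the orbit of a generic $\vec\alpha\in\mathbb T^N$ equidistributes in a way that lets us find a single $(n_k)$ making $\phi_j(n_k)\alpha_j\to 0$ simultaneously for all $j$ in the "all-rigid" case. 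For genuine mixing along $(\phi_j(n_k))$ when $\xi_j=1$, I would instead use continuous measures whose Fourier coefficients decay along the relevant subsequence — e.g.\ Riesz-product or Gaussian-type measures, or simply invoke that a mixing Gaussian system has $\widehat\sigma(n)\to 0$, hence $\widehat\sigma(\phi_j(n_k))\to 0$ for any $\phi_j$ with $|\phi_j(n_k)|\to\infty$, which holds automatically by asymptotic independence (take $\vec a=e_j$). The real content is handling a mixed $\vec\xi$ with both kinds of indices present \emph{along one common sequence}: for the indices $j$ with $\xi_j=0$ we need a measure rigid along $(\phi_j(n_k))$, and for those with $\xi_j=1$ we need it mixing along $(\phi_j(n_k))$, but these must be reconciled on \emph{one} measure and \emph{one} sequence.

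\textbf{The diagonal trick.} My approach to the mixed case is to work on a product: take $\sigma_{\vec\xi} = \bigotimes_{j:\xi_j=0}\sigma^{\mathrm{rig}}_j \;\otimes\; \tau$ pushed appropriately, or more precisely build a single measure on $\mathbb T$ by a convolution/product construction so that its Fourier transform factors as $\widehat{\sigma_{\vec\xi}}(m) = \prod_{j:\xi_j=0}\widehat{\sigma^{\mathrm{rig}}_j}(m)\cdot\widehat\tau(m)$, where $\sigma^{\mathrm{rig}}_j$ is designed so that $\widehat{\sigma^{\mathrm{rig}}_j}(\phi_j(n_k))\to 1$ while $\widehat{\sigma^{\mathrm{rig}}_j}(\phi_i(n_k))\to 0$ for $i\ne j$ (here asymptotic independence is exactly what makes $|\phi_i(n_k)-\phi_j(n_k)\cdot(\text{rational})|\to\infty$ on the relevant support, giving the desired decay), and $\tau$ is a fixed continuous measure with $\widehat\tau(\phi_i(n_k))\to 0$ for every $i$ with $\xi_i=1$. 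One then first fixes the $\alpha_j$'s and a common subsequence $(n_k)$ making all the $\phi_j(n_k)\alpha_j\to 0$ (for $j$ with $\xi_j=0$) by a diagonal argument over the countably many $\vec\xi$'s, \emph{then} chooses $\tau$. Continuity (hence weak mixing of the Gaussian system, hence aperiodicity, as noted in the footnote) is arranged by smearing each $\sigma^{\mathrm{rig}}_j$ slightly and taking $\tau$ continuous.

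\textbf{The main obstacle.} The hard part is the simultaneous reconciliation: we must produce \emph{one} increasing sequence $(n_k)$ that works for \emph{all} $2^N$ sign patterns $\vec\xi$ at once, and for each $\vec\xi$ the measure $\sigma_{\vec\xi}$ must be rigid along $(\phi_j(n_k))$ for the "$0$" coordinates while being mixing along $(\phi_j(n_k))$ for the "$1$" coordinates — the tension being that along a rigidity sequence $\widehat{\sigma^{\mathrm{rig}}_j}$ is close to $1$, and we need the \emph{product} to still decay in the mixing directions. This is where asymptotic independence does the essential work: because $\sum a_j\phi_j(n_k)\to\infty$ for every nonzero integer vector, the "rigid" factors $\widehat{\sigma^{\mathrm{rig}}_j}(\phi_i(n_k))$ for $i\ne j$ can be forced to $0$, so they do not obstruct decay in direction $i$, and the single fixed continuous factor $\tau$ (e.g.\ a measure with $\widehat\tau\to 0$ at infinity along $\mathbb Z$, such as any Rajchman measure, noting $|\phi_i(n_k)|\to\infty$) then supplies the needed decay. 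I would execute this as: (i) extract the common $(n_k)$ and $\alpha_j$'s by a diagonalization using asymptotic independence; (ii) define the $2^N$ measures and verify the Fourier asymptotics (\ref{0.KeyLimit}) factor by factor; (iii) invoke the standard correspondence between Gaussian systems on $([0,1],\mathcal B,\mu)$ and their spectral measures to translate the Fourier statements into the correlation statements and to read off weak mixing from continuity of $\sigma_{\vec\xi}$.
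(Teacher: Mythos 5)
Your reduction to spectral measures of Gaussian systems is exactly the right frame (and is the paper's frame too), but the construction you propose for the mixed patterns $\vec\xi$ breaks down because of how convolution interacts with rigidity. Since $\widehat{\sigma*\tau}=\widehat\sigma\,\widehat\tau$ and all factors have modulus $\le 1$, a convolution is rigid along a sequence $(a_k)$ only if \emph{every} factor has $|\widehat{\,\cdot\,}(a_k)|\to 1$ along it. In your product $\sigma_{\vec\xi}=\bigl(\ast_{j:\xi_j=0}\sigma_j^{\mathrm{rig}}\bigr)*\tau$ you require $\widehat{\sigma_j^{\mathrm{rig}}}(\phi_i(n_k))\to 0$ for $i\ne j$, and you suggest taking $\tau$ Rajchman; but then for a coordinate $j_0$ with $\xi_{j_0}=0$ the factors $\sigma_j^{\mathrm{rig}}$ with $j\ne j_0$ (and $\tau$ itself, since $|\phi_{j_0}(n_k)|\to\infty$) force $\widehat{\sigma_{\vec\xi}}(\phi_{j_0}(n_k))\to 0$, i.e.\ the constructed measure is \emph{mixing}, not rigid, along precisely the times where rigidity is required. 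The requirements you place on the building blocks are mutually inconsistent whenever $\vec\xi$ has a zero coordinate, so the ``diagonal trick'' does not resolve the tension; it relocates the whole mixed-behavior problem into the choice of $\tau$, for which no workable candidate is given. Two smaller points: ``smearing'' a Kronecker-type atom by a fixed amount destroys rigidity (the smear's Fourier factor decays along $|\phi_j(n_k)|\to\infty$), so continuity cannot be obtained that way; and mixing of the Gaussian system along $(n_k)$ requires $\widehat\sigma(n_k+m)\to 0$ for every $m\in\mathbb Z$, not just $m=0$ (cf.\ \cref{1.NotionsOfMixing}).

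What is genuinely needed — and what the paper does — is a \emph{single} measure per pattern, built so that one driving mechanism produces value $1$ in the rigid directions and $0$ in the mixing directions simultaneously. Concretely, $\sigma_{\vec\xi}$ is the law of the random series $\sum_t\alpha_t^{(\vec\xi)}(\omega_1(t)-\omega_2(t))\bmod 1$ over independent fair coin flips, where the frequencies $\alpha_k^{(\vec\xi)}$ and a common sequence $(n_k)$ are chosen inductively so that $\phi_\ell(n_k)\,\alpha_k^{(\vec\xi)}$ is within $o(1)$ of $b_\ell^{(\vec\xi)}=1-\xi_\ell/2$ modulo $1$ for all $\ell$ and all $2^N$ patterns at once, while all earlier and later terms contribute $o(1)$. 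Then the $k$-th coin flip contributes the Fourier factor $\tfrac12\bigl(1+e^{2\pi i b_\ell^{(\vec\xi)}}\bigr)$, which is $1$ when $\xi_\ell=0$ and $0$ when $\xi_\ell=1$, and continuity (hence weak mixing) follows from finiteness of point preimages as in \cref{3.IndependentPoly}. The steering step is where asymptotic independence really enters: after passing to a subsequence one gets strong asymptotic independence, and the Weyl-type \cref{4.WeylsGeneralization} shows that for almost every choice of the $\alpha$'s the vector $\bigl(\phi_\ell(m_t)\alpha_k^{(\vec\xi)}\bigr)_{\ell,\vec\xi,k}$ equidistributes in the torus, which is what allows one common $(n_k)$ to hit all the targets simultaneously. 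Your proposal contains neither this simultaneous equidistribution lemma nor a substitute for it, and its factorized construction cannot be repaired without effectively reverting to such a single-measure argument.
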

\begin{rem}
When $(\phi_j(k))_{k\in\N}=(v_j(k))_{k\in\N}$, $j\in\{1,...,N\}$, for some  $\Q$-linearly independent polynomials $v_1,...,v_N\in\Z[x]$  satisfying $v_j(0)=0$,  \cref{0.MainResultExistence} follows from Theorem 3.11 in \cite{BKLUltrafilterPoly}. We give an alternative proof of this restricted version of \cref{0.MainResultExistence} in Section \ref{Section3NoConstantTerm}.
\end{rem}

Consider now the polynomials $v_1,...,v_N\in\Z[x]$. We conclude this introduction by formulating a simple corollary of \cref{0.MainResultExistence} which links the linear independence of the polynomials $v_1(x)-v_1(0),...,v_N(x)-v_N(0)\in\Z[x]$ to the possible values of the limits of the form $$\lim_{k\rightarrow\infty}\mu(A\cap T^{- v_j(n_k)}B).$$
(Observe that the linear independence of the polynomials  $v_1(x)-v_1(0),...,v_N(x)-v_N(0)$ is equivalent to the asymptotic independence of the sequences $(v_1(k))_{k\in\N}$,...,$(v_N(k))_{k\in\N}$.)
\begin{cor}[Cf. Corollary F in \cite{BKLUltrafilterPoly}]\label{0.SimplePolyIndependence}
Let $N\in\N$ and let $t\in\{0,...,N\}$. For any non-constant polynomials $v_1,...,v_N\in\Z[x]$ such that $v_1(x)-v_1(0),...,v_N(x)-v_N(0)$ are $\Q$-linearly independent, there exists an increasing  sequence $(n_k)_{k\in\N}$ in $\N$ and a $T\in\text{Aut}([0,1],\mathcal B,\mu)$ with the property that for any $A,B\in\mathcal B$ and any $j\in\{1,...,N\}$,
 $$\lim_{k\rightarrow\infty}\mu(A\cap T^{- v_j(n_k)}B)=\begin{cases}
 \mu(A\cap B)\text{ if }j\leq t,\\
 \mu(A)\mu(B)\text{ if }j\in\{1,...,N\}\setminus\{0,...,t\}.
 \end{cases}$$
 \end{cor}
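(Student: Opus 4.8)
The plan is to obtain \cref{0.SimplePolyIndependence} as an immediate specialization of \cref{0.MainResultExistence}. First I would set $\phi_j(k):=v_j(k)$ for each $j\in\{1,\dots,N\}$; since $v_j\in\Z[x]$, this defines a sequence $\phi_j\colon\N\to\Z$. The only point to check before invoking \cref{0.MainResultExistence} is that $\phi_1,\dots,\phi_N$ are asymptotically independent, which is the equivalence flagged in the paragraph preceding the statement. Concretely, for $\vec a=(a_1,\dots,a_N)\in\Z^N\setminus\{\vec 0\}$ one writes
$$\sum_{j=1}^N a_j\phi_j(k)=w(k)+c,\qquad w(x):=\sum_{j=1}^N a_j\bigl(v_j(x)-v_j(0)\bigr),\quad c:=\sum_{j=1}^N a_j v_j(0).$$
By the assumed $\Q$-linear independence of $v_1(x)-v_1(0),\dots,v_N(x)-v_N(0)$, the polynomial $w$ is not identically zero; since $w(0)=0$ it has degree at least $1$, so $|w(k)+c|\to\infty$ as $k\to\infty$. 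This establishes the asymptotic independence of $\phi_1,\dots,\phi_N$.

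Next I would apply \cref{0.MainResultExistence} to these sequences, obtaining an increasing sequence $(n_k)_{k\in\N}$ in $\N$ and, for every $\vec\xi\in\{0,1\}^N$, a weakly mixing $T_{\vec\xi}\in\text{Aut}([0,1],\mathcal B,\mu)$ satisfying \eqref{0.KeyLimit}. It then suffices to make the single choice $\vec\xi=(\xi_1,\dots,\xi_N)$ with $\xi_j=0$ for $j\le t$ and $\xi_j=1$ for $t<j\le N$, and to take $T:=T_{\vec\xi}$ together with the same sequence $(n_k)_{k\in\N}$. For $j\le t$, \eqref{0.KeyLimit} reads $\lim_{k\to\infty}\mu(A\cap T^{-v_j(n_k)}B)=\mu(A\cap B)$, while for $j>t$ it reads $\lim_{k\to\infty}\mu(A\cap T^{-v_j(n_k)}B)=\mu(A)\mu(B)$, which is precisely the asserted case distinction.

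Since all the analytic content is already packaged into \cref{0.MainResultExistence}, there is no serious obstacle here: the proof reduces to recognizing that $\Q$-linear independence of the $v_j(x)-v_j(0)$ is the same as asymptotic independence of the $(v_j(k))_{k\in\N}$, and to selecting the appropriate $0/1$ pattern $\vec\xi$. One may also note in passing that the hypothesis that each $v_j$ is non-constant is in fact forced by the $\Q$-linear independence assumption, since a constant $v_j$ would contribute the zero polynomial $v_j(x)-v_j(0)$ to the family.
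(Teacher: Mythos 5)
Your proposal is correct and matches the paper's intended argument: the paper presents \cref{0.SimplePolyIndependence} as an immediate consequence of \cref{0.MainResultExistence}, using precisely the observation (stated parenthetically before the corollary) that $\Q$-linear independence of $v_1(x)-v_1(0),\dots,v_N(x)-v_N(0)$ is equivalent to asymptotic independence of $(v_1(k))_{k\in\N},\dots,(v_N(k))_{k\in\N}$, and then choosing the pattern $\xi_j=0$ for $j\le t$ and $\xi_j=1$ for $j>t$. Your verification of that equivalence and the remark that non-constancy of the $v_j$ is forced by the independence hypothesis are both accurate.
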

 The structure of this paper is as follows. In Section \ref{Section2GaussianBackground} we introduce the necessary background on Gaussian systems. In Section \ref{Section3NoConstantTerm} we prove a version of \cref{0.MainResultExistence} dealing with polynomials  having zero constant term. In Section \ref{Section4ArbitraryZPolynomials} we prove \cref{0.MainResultExistence}. The proof of the special case of \cref{0.MainResultExistence} given in Sections \ref{Section3NoConstantTerm} is  quite a bit simpler than and somewhat different from  the proof  of \cref{0.MainResultExistence} and is of interest on its own. In Section \ref{Section6InterpolationResults}   we prove \cref{0.MainInterpolationResult} and obtain \cref{0.IndependentPolysStepinResult} as a corollary. In Section \ref{Section7OtherFamilies} we use a slight modification of the methods introduced in Section \ref{Section3NoConstantTerm} to provide examples of non-asymptotically independent sequences for which Condition C holds. 
\section{Background on Gaussian Systems}\label{Section2GaussianBackground}
In this Section we review the necessary background material on Gaussian systems.
\subsection{Basic definitions}
Let $\mathcal A=\text{Borel}(\R^\Z)$ and consider the measurable space $(\R^\Z,\mathcal A)$. For each $n\in\Z$, we will let 
\begin{equation}\label{1.ProjectionDefn}
X_n:\R^\Z\rightarrow \R
\end{equation}
denote the projection onto the $n$th coordinate (i.e. for each $\omega\in\R^\Z$, $X_n(\omega)=\omega(n)$).\\
A non-negative Borel measure $\rho$ on $\mathbb T=[0,1)$ is called \textbf{symmetric} if for any $n\in\Z$,
$$\int_\mathbb Te^{2\pi inx}\text{d}\rho(x)=\int_\mathbb Te^{-2\pi inx}\text{d}\rho(x).$$
It is well known that for any symmetric non-negative finite Borel measure $\rho$ on $\mathbb T$, there exists a unique probability measure $\gamma=\gamma_{\rho}:\mathcal A\rightarrow [0,1]$ such that (a) for any 
$f\in H_1=\overline{\text{span}_{\R}\{X_n\,|\,n\in\Z\}^{L^2(\gamma)}}$, 
$f$ has a Gaussian distribution with mean zero\footnote{We will treat the constant function $f=0$ as a normal random variable with variance zero.} and (b)
for any $m,n\in\Z$,
\begin{equation}\label{1.CorrelationsCoincide}
\int_{\R^\Z} X_nX_m\text{d}\gamma=\int_{\mathbb T}e^{2\pi i(m-n)x}\text{d}\rho(x).
\end{equation}
We call the probability measure $\gamma$ the \textbf{Gaussian measure associated with $\rho$} and refer to $\rho$ as the \textbf{spectral measure associated with $\gamma$}. As we will see below, many of the properties of $\rho$ (and hence $H_1$) are intrinsically connected with those of $\gamma$.\\

Let $T:\R^\Z\rightarrow\R^\Z$ denote the shift map defined by
$$[T(\omega)](n)=\omega(n+1)$$
for each $\omega\in\R^\Z$ and each $n\in\Z$. The quadruple $(\R^\Z,\mathcal A,\gamma,T)$ is an invertible probability measure preserving system called the \textbf{Gaussian system associated with $\rho$}. (For the construction of a Gaussian system, see  \cite[Chapters 8]{cornfeld1982ergodic} or \cite[Appendix C]{kechris2010Global}, for example.)\\

Most of the results in the coming sections deal with non-trivial Gaussian systems. A Gaussian system $(\R^\Z,\mathcal A,\gamma,T)$ is \textbf{non-trivial} if its spectral measure is not the zero measure. (When $\rho$ is the zero measure, the associated Gaussian system is isomorphic to the probability measure preserving system with only one point.)
\subsection{Gaussian self-joinings of a Gaussian system}
In this subsection we review the necessary background material on Gaussian self-joinings of Gaussian systems, which were introduced in \cite{lemanczyk2000gaussian}.\\

A \textbf{self-joining} of a Gaussian system $(\R^\Z,\mathcal A,,\gamma, T)$ is a ($T\times T$)-invariant Borel probability measure $\Gamma:\mathcal A\otimes\mathcal A\rightarrow [0,1]$ such that for any $A\in\mathcal A$, $\Gamma(\R^\Z\times A)=\Gamma(A\times\R^\Z)=\gamma(A)$. Denote the set of all self-joinings of $(\R^\Z,\mathcal A,\gamma, T)$ by $\mathcal J(\gamma)$. 
Identifying $\gamma$ with a Borel probability measure on $[0,1]$, one can view $\mathcal J(\gamma)$ as a topological subspace of the space of all Borel probability measures on $[0,1]\times[0,1]$ with the weak-* topology. With this topology, $\mathcal J(\gamma)$ is a compact metrizable space with the property that for any  sequence $(\Gamma_k)_{k\in\N}$ in $\mathcal J(\gamma)$, 
$$\lim_{k\rightarrow\infty}\Gamma_k=\Gamma$$
if  and only if for every $A,B\in\mathcal A$,
\begin{equation}\label{1.JoiningConvergence}
\lim_{k\rightarrow\infty}\Gamma_k(A\times B)=\Gamma(A\times B).
\end{equation}
\begin{rem}
Condition \eqref{1.JoiningConvergence} is equivalent to the following (seemingly stronger) condition: For any $f,g\in L^2(\gamma)$, $$\lim_{k\rightarrow\infty}\int_{\R^\Z\times\R^\Z} f(\omega')g(\omega'')\text{d}\Gamma_k(\omega',\omega'')=\int_{\R^\Z\times\R^\Z} f(\omega')g(\omega'')\text{d}\Gamma(\omega',\omega'').$$
\end{rem}

Consider now the projections $X'_n,X''_n:\R^\Z\times\R^\Z\rightarrow \R$, $n\in\Z$, defined by
$$X'_n(\omega',\omega'')=\omega'(n)\text{ and }X_n''(\omega',\omega'')=\omega''(n)$$
for each $(\omega',\omega'')\in \R^\Z\times\R^\Z$. For any $\Gamma\in\mathcal J(\gamma)$, we will let $H_1'$ and $H_1''$ denote the closed real subspaces of $L^2(\Gamma)$ spanned by $(X'_n)_{n\in\Z}$ and $(X''_n)_{n\in\Z}$, respectively. Note that both $H_1'$ and $H_1''$ depend only on the topology of $L^2(\gamma)$ and not on the specific choice of $\Gamma$.\\
Given $\Gamma\in\mathcal J(\gamma)$, we say that $\Gamma$ is a \textbf{Gaussian self-joining} (of $(\R^\Z,\mathcal A,\gamma,T)$) if  $\overline{H'_1+H''_1}$ is a Gaussian  subspace in $L^2(\Gamma)$, meaning that for any $f\in \overline{H'_1+H''_1}$, $f$ has a Gaussian distribution. Denote the set of all Gaussian self-joinings of $\gamma$ by $\mathcal J_G(\gamma)$. One can show that for any $\Gamma\in\mathcal J_G(\gamma)$, $\Gamma$ is completely determined by the values of the correlations
$$\int_{\R^\Z\times\R^\Z}X_n'X_m''\text{d}\Gamma,\,n,m\in\Z.$$

The following are important examples of Gaussian self-joinings of $(\R^\Z,\mathcal A,\gamma,T)$:
\begin{enumerate}
    \item [-] The product measure $\gamma\otimes \gamma$. This measure is characterized by the correlations 
    \begin{equation}\label{1.IndependentCorrelations}
    \int_{\R^\Z\times\R^\Z}X_n'X_m''\text{d}\Gamma=0,\,n,m\in\Z.
    \end{equation}
    \item [-] The measure $\Delta_a$, $a\in\Z$, defined by $\Delta_a(A\times B)=\gamma(A\cap T^{-a}B)$ for any $A,B\in\mathcal A$. This measure is characterized by the correlations
     \begin{equation}\label{1.DependentCorrelations}
     \int_{\R^\Z\times\R^\Z}X_n'X_m''\text{d}\Gamma=\int_{\R^\Z}X_nX_{m+a}\text{d}\gamma,\,n,m\in\Z.
     \end{equation}
\end{enumerate}
The next proposition was mentioned as a consequence of Theorem 1 in \cite[page   267]{lemanczyk2000gaussian}.
\begin{prop}
$\mathcal J_G(\gamma)$ is a closed (and hence compact) subspace of $\mathcal J(\gamma)$.
\end{prop}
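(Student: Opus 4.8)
The plan is to show that if $(\Gamma_k)_{k\in\N}$ is a sequence in $\mathcal J_G(\gamma)$ converging in $\mathcal J(\gamma)$ to some $\Gamma$, then $\Gamma$ is itself a Gaussian self-joining. Since $\mathcal J(\gamma)$ is compact metrizable, closedness is equivalent to sequential closedness, so this suffices. The only thing to verify is that every element $f$ of the closed subspace $\overline{H_1'+H_1''}\subseteq L^2(\Gamma)$ has a Gaussian distribution. Because $H_1'$ and $H_1''$ are spanned (over $\R$, after closure) by the coordinate projections $X_n', X_m''$, and these same projections generate the corresponding subspaces for every joining simultaneously (they depend only on the topology of $L^2(\gamma)$, as noted in the excerpt), it is enough to check that every finite real linear combination $g=\sum_{n}a_n X_n' + \sum_m b_m X_m''$ is Gaussian under $\Gamma$, and then pass to $L^2(\Gamma)$-limits: an $L^2$-limit of mean-zero Gaussian random variables is again mean-zero Gaussian (the variances converge, and if a variance limit is $0$ we use the convention that the constant $0$ is degenerate Gaussian).

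First I would fix such a finite combination $g$ and compute its variance under $\Gamma$. Since $g$ is a finite sum, $\int g^2\,\mathrm d\Gamma$ is a finite linear combination of correlations $\int X_n'X_m''\,\mathrm d\Gamma$, $\int X_n'X_m'\,\mathrm d\Gamma=\int X_nX_m\,\mathrm d\gamma$, and $\int X_n''X_m''\,\mathrm d\Gamma=\int X_nX_m\,\mathrm d\gamma$; by \eqref{1.JoiningConvergence} (and the remark following it, or directly since $g$ is a fixed finite combination of projections), $\int g^2\,\mathrm d\Gamma=\lim_{k\to\infty}\int g^2\,\mathrm d\Gamma_k=:\sigma^2$. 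Each $\Gamma_k\in\mathcal J_G(\gamma)$, so under $\Gamma_k$ the random variable $g$ lies in the Gaussian subspace $\overline{H_1'+H_1''}\subseteq L^2(\Gamma_k)$ and hence is a mean-zero Gaussian with variance $\sigma_k^2:=\int g^2\,\mathrm d\Gamma_k\to\sigma^2$. Consequently, for every $t\in\R$,
\[
\int_{\R^\Z\times\R^\Z} e^{itg}\,\mathrm d\Gamma_k = e^{-t^2\sigma_k^2/2}.
\]
Now I would take the limit $k\to\infty$. The function $(\omega',\omega'')\mapsto e^{itg(\omega',\omega'')}$ is a bounded continuous function of finitely many coordinates, hence by weak-* convergence $\Gamma_k\to\Gamma$ (equivalently by the remark after \eqref{1.JoiningConvergence}, applied to the real and imaginary parts, which are bounded and of the required form) the left side converges to $\int e^{itg}\,\mathrm d\Gamma$. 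The right side converges to $e^{-t^2\sigma^2/2}$. Therefore the characteristic function of $g$ under $\Gamma$ is $e^{-t^2\sigma^2/2}$, i.e. $g$ is mean-zero Gaussian with variance $\sigma^2$ (degenerate if $\sigma^2=0$). Passing to $L^2(\Gamma)$-limits as described, every $f\in\overline{H_1'+H_1''}$ is mean-zero Gaussian, so $\Gamma\in\mathcal J_G(\gamma)$.

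The main obstacle, such as it is, is purely bookkeeping: making precise that the subspaces $H_1', H_1''$ and their sum are "the same" inside every $L^2(\Gamma_k)$ and inside $L^2(\Gamma)$, so that a single finite linear combination $g$ of projections can be tracked across all the joinings, and checking the convergence $\int e^{itg}\,\mathrm d\Gamma_k\to\int e^{itg}\,\mathrm d\Gamma$ carefully — either by invoking the remark after \eqref{1.JoiningConvergence} with $f,g$ replaced by suitable bounded functions, or by approximating $e^{itg}$ by $L^2(\gamma)$-functions. A minor point worth a sentence is the degenerate case $\sigma^2=0$, handled by the paper's stated convention that the zero function counts as a Gaussian of variance zero.
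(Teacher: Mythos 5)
Your proposal is correct and takes essentially the same route as the paper's proof: both verify sequential closedness by showing that the characteristic function of a sum from $H_1'+H_1''$ (in your case, a finite real combination of the projections $X_n',X_m''$) under $\Gamma$ is the pointwise limit of its Gaussian characteristic functions under the $\Gamma_k$ --- using the remark after \eqref{1.JoiningConvergence} applied to the bounded functions $e^{itf_1},e^{itf_2}$ --- and then use that limits of Gaussian distributions are Gaussian to cover the closure. Your explicit verification that $\sigma_k^2\to\sigma^2$ via convergence of the correlations is a harmless, slightly more detailed version of what the paper leaves implicit.
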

\begin{proof}
 Let $(\Gamma_k)_{k\in\N}$ be a sequence in $\mathcal J_G(\gamma)$ such that $\lim_{k\rightarrow \infty}\Gamma_k=\Gamma$ for some $\Gamma\in\mathcal J(\gamma)$. Since the limit of Gaussian distributions is again a Gaussian distribution, it suffices to show that for any  $f_1\in H_1'$ and $f_2\in H_1''$, the probability measure $\Gamma\circ (f_1+f_2)^{-1}$ has a Gaussian distribution. To prove this, we will compute the characteristic function $\phi$  of $\Gamma\circ(f_1+f_2)^{-1}$. For each $t\in\R$, 
 \begin{multline}\label{1.CharacteristicFunction}
 \phi(t)=\int_{\R^\Z\times\R^\Z} e^{ it[f_1(\omega')+f_2(\omega'')]}\text{d}\Gamma(\omega',\omega'')= \int_{\R^\Z\times\R^\Z} e^{ itf_1(\omega')}e^{itf_2(\omega'')}\text{d}\Gamma(\omega',\omega'')\\
 =\lim_{k\rightarrow\infty}\int_{\R^\Z\times\R^\Z} e^{ itf_1(\omega')}e^{itf_2(\omega'')}\text{d}\Gamma_k(\omega',\omega'')= \lim_{k\rightarrow\infty}\int_{\R^\Z\times\R^\Z} e^{ it[f_1(\omega')+f_2(\omega'')]}\text{d}\Gamma_k(\omega',\omega'').
 \end{multline}
For each $k\in\N$, $\Gamma_k\circ(f_1+f_2)^{-1}$ has a Gaussian distribution. Thus, by \eqref{1.CharacteristicFunction}, $\Gamma\circ(f_1+f_2)^{-1}$ has also a Gaussian distribution.
\end{proof}

\subsection{Connections between the mixing properties of $(\R^\Z,\mathcal A,\gamma,T)$ and its spectral measure}\label{Subsection2.2GaussianMixingProperties}
Before stating the results in this subsection, we need some definitions.\\

Let $(X,\mathcal F,\nu, S)$ be an invertible probability measure preserving system. We say that \textbf{$S$ has the mixing property along the sequence $(n_k)_{k\in\N}$ in $\Z$} if for any $A,B\in\mathcal F$,
$$\lim_{k\rightarrow\infty}\nu(A\cap S^{-n_k}B)=\nu(A)\nu(B).$$
We say that a system $(X,\mathcal F,\nu,S)$ is \textbf{rigid along the sequence $(n_k)_{k\in\N}$ in $\Z$} (or equivalently, $(n_k)_{k\in\N}$ is a rigidity sequence for $(X,\mathcal F,\nu,S)$) if for any $A,B\in\mathcal F$,
$$\lim_{k\rightarrow\infty}\nu(A\cap S^{-n_k}B)=\nu(A\cap B).$$ 

Now let $\rho$ be a positive finite Borel measure on $\mathbb T$ and let  $(n_k)_{k\in\N}$ be a sequence in $\Z$. We say that $\rho$ has the \textbf{mixing property along the sequence $(n_k)_{k\in\N}$ in $\Z$} if for every $m\in\Z$,
$$\lim_{k\rightarrow\infty}\int_\mathbb T e^{2\pi i (n_k+m)x}\text{d}\rho(x)=0.$$
We  say that $\rho$ is \textbf{rigid along the sequence $(n_k)_{k\in\N}$ in $\Z$} if for every $m\in\Z$,
$$\lim_{k\rightarrow\infty}\int_\mathbb T e^{2\pi i (n_k+m)x}\text{d}\rho(x)=\int_\mathbb T e^{2\pi imx}\text{d}\rho(x).$$

The following result exhibits the close connection between the "dynamical" properties of  a spectral measure $\rho$ defined on $\mathbb T$ and the Gaussian system associated with $\rho$.
\begin{thm}\label{1.NotionsOfMixing}
Let $\rho$ be a symmetric positive finite Borel measure on $\mathbb T$ and let
$(\R^\Z,\mathcal A,\gamma,T)$ be the Gaussian system associated with it. Given a  sequence $(n_k)_{k\in\N}$ in $\Z$, the following statements hold:
\begin{enumerate}[(i)]
    \item $T$ has the mixing property along $(n_k)_{k\in\N}$ if and only if 
    $\rho$ has the mixing property along $(n_k)_{k\in\N}$. 
    \item $T$ is rigid along $(n_k)_{k\in\N}$ if and only if  $\rho$ is rigid along $(n_k)_{k\in\N}$. 
    \item Let $a\in \Z$. The following are equivalent:
    \begin{enumerate}[(1)]
     \item For every $A,B\in\mathcal A$,
    \begin{equation}\label{1.a-rigidityForGaussians}
    \lim_{k\rightarrow\infty}\gamma(A\cap T^{-n_k}B)=\gamma(A\cap T^{-a}B).
    \end{equation}
     \item For every $m\in\Z$,
    \begin{equation}\label{1.a-rigidityForSpectralMeasure}
    \lim_{k\rightarrow\infty}\int_\mathbb Te^{2\pi i(n_k+m)x}\text{d}\rho(x)=\int_\mathbb T e^{2\pi i(a+m)x}\text{d}\rho(x).
    \end{equation}
    \end{enumerate}
\end{enumerate}
\end{thm}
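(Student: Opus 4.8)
The plan is to transfer every statement through the Koopman operator $U_T$ on $L^2(\gamma)$ (taken complex, together with the complexified Wiener--It\^o chaos spaces) and the standard structure theory of Gaussian systems. Write $\widehat\rho(\ell)=\int_{\mathbb T}e^{2\pi i\ell x}\,\text{d}\rho(x)$; by symmetry of $\rho$ this is real and even in $\ell$. Recall (see \cite{cornfeld1982ergodic}) that $L^2(\gamma)=\bigoplus_{k\ge0}\mathcal H^{(k)}$, where $\mathcal H^{(0)}=\C$, $\mathcal H^{(1)}=H_1$, each $\mathcal H^{(k)}$ is $U_T$-invariant, and there is a canonical unitary identification $\mathcal H^{(k)}\cong H_1^{\odot k}$ ($k$-fold symmetric tensor power) under which $U_T|_{\mathcal H^{(k)}}$ becomes $(U_T|_{H_1})^{\odot k}$; moreover $U_T|_{H_1}$ is unitarily equivalent to multiplication by $e^{2\pi i x}$ on $L^2(\rho)$ via $X_n\leftrightarrow e^{2\pi inx}$, so in particular $\langle U_T^{\ell}X_0,X_m\rangle_\gamma=\int_{\R^\Z}X_{\ell}X_{m}\,\text{d}\gamma=\widehat\rho(m-\ell)$. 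Finally note the elementary dictionary, valid because the linear span of indicator functions is dense in $L^2(\gamma)$, because $\|U_T^{n}\|=1$ for all $n$, and by polarization together with $U_T$-invariance of the constants: ``$T$ has the mixing property along $(n_k)$'' $\Leftrightarrow$ ``$U_T^{n_k}\to0$ in the weak operator topology on $L^2_0:=L^2(\gamma)\ominus\C$''; ``$T$ is rigid along $(n_k)$'' $\Leftrightarrow$ ``$U_T^{n_k}\to\mathrm{Id}$ weakly (equivalently strongly, $U_T^{n_k}$ being unitary) on $L^2(\gamma)$''; and statement (1) of (iii) $\Leftrightarrow$ ``$U_T^{n_k}\to U_T^a$ weakly'' $\Leftrightarrow$ ``$U_T^{n_k-a}\to\mathrm{Id}$ weakly''.

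\emph{Part (i).} From the correlation formula, the density of $\operatorname{span}\{X_n\}$ in $H_1$, the uniform bound $\|U_T^{n_k}\|=1$, and the evenness of $\widehat\rho$, one checks directly that $U_T^{n_k}\to0$ weakly on $H_1$ if and only if $\widehat\rho(n_k+m)\to0$ for every $m\in\Z$, i.e. if and only if $\rho$ has the mixing property along $(n_k)$. To promote this to $L^2_0$ I will use the elementary fact that if $\sup_k\|V_k\|\le1$ and $V_k\to0$ weakly on a Hilbert space $\mathcal H$, then $V_k^{\odot m}\to0$ weakly on $\mathcal H^{\odot m}$: on an elementary tensor $\xi_1\otimes\cdots\otimes\xi_m$ one has $\langle V_k^{\otimes m}\xi,\eta\rangle=\prod_{j=1}^m\langle V_k\xi_j,\eta_j\rangle$, a product of $m$ bounded factors each tending to $0$; then invoke density of finite sums of elementary tensors and the uniform bound, and restrict to the invariant symmetric subspace. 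Applied with $\mathcal H=H_1$, this gives $U_T^{n_k}\to0$ weakly on every $\mathcal H^{(k)}$, $k\ge1$, hence on $L^2_0=\bigoplus_{k\ge1}\mathcal H^{(k)}$ by a $3\varepsilon$ argument over finite partial sums (using $\|U_T^{n_k}\|=1$ once more). The reverse implication is immediate by restricting to $X_0$ and $X_m$ in $H_1\subseteq L^2_0$. This proves (i).

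\emph{Parts (ii) and (iii).} It suffices to prove (ii), since (iii) follows by applying (ii) to the sequence $(n_k-a)_{k\in\N}$ in $\Z$: by the dictionary, statement (1) of (iii) is exactly ``$T$ rigid along $(n_k-a)$'', while statement (2) is, after replacing $m$ by $m+a$, exactly ``$\widehat\rho\big((n_k-a)+m\big)\to\widehat\rho(m)$ for all $m$'', i.e. ``$\rho$ rigid along $(n_k-a)$'', and (ii) identifies these. For (ii): in the spectral model $H_1\cong L^2(\rho)$, $U_T^{n_k}\to\mathrm{Id}$ strongly on $H_1$ iff $\|e^{2\pi in_kx}g-g\|_{L^2(\rho)}\to0$ for all $g\in L^2(\rho)$; evaluating at $g=1$ gives $\int_{\mathbb T}|e^{2\pi in_kx}-1|^2\,\text{d}\rho=2\widehat\rho(0)-2\widehat\rho(n_k)\to0$, and conversely this single convergence suffices because $\|e^{2\pi in_kx}g-g\|_{L^2(\rho)}^2\le\|g\|_\infty^2\big(2\widehat\rho(0)-2\widehat\rho(n_k)\big)$ for bounded $g$ (then approximate arbitrary $g\in L^2(\rho)$). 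Thus $U_T^{n_k}\to\mathrm{Id}$ strongly on $H_1$ iff $\widehat\rho(n_k)\to\widehat\rho(0)$, and multiplying by $e^{2\pi imx}$ shows this is equivalent to $\widehat\rho(n_k+m)\to\widehat\rho(m)$ for all $m$, i.e. to $\rho$ being rigid along $(n_k)$. To pass from $H_1$ to all of $L^2(\gamma)$ I use the companion fact: if $\sup_k\|V_k\|\le1$ and $V_k\to\mathrm{Id}$ strongly on $\mathcal H$, then $V_k^{\odot m}\to\mathrm{Id}$ strongly on $\mathcal H^{\odot m}$ (telescoping $V_k\xi_1\otimes\cdots\otimes V_k\xi_m-\xi_1\otimes\cdots\otimes\xi_m$ on elementary tensors, then density and the uniform bound). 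Hence $U_T^{n_k}\to\mathrm{Id}$ strongly on each $\mathcal H^{(k)}$ and thus on $L^2(\gamma)$, completing (ii).

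The analytic content is light; the real substance is the reduction, namely having at hand the Wiener--It\^o decomposition and the spectral model $H_1\cong L^2(\rho)$, so that $U_T$ on the $k$-th chaos is the $k$-fold symmetric power of $U_T|_{H_1}$, together with the two ``stability under symmetric tensor powers'' observations (weak convergence to $0$, strong convergence to the identity). If one wants a self-contained account, establishing $U_T|_{\mathcal H^{(k)}}\cong(U_T|_{H_1})^{\odot k}$ is the heaviest ingredient, but it is classical \cite{cornfeld1982ergodic}. A minor point of care: since $e^{2\pi inx}$ is complex-valued, the spectral model is cleanest on the complexified chaos spaces, which is harmless as every limit in the statement is real.
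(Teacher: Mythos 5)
Your proposal is correct, but it proves the theorem by a genuinely different route than the paper. The paper (which only writes out part (i), declaring (ii) and (iii) "similar") gets the hard direction -- from the spectral condition on $\rho$ to the set-wise limits for $T$ -- via the machinery of Gaussian self-joinings: one takes a subsequence along which $\Delta_{n_k}\to\Gamma$ in the compact space $\mathcal J_G(\gamma)$, computes that all correlations $\int X_n'X_m''\,\text{d}\Gamma$ vanish (resp.\ match those of $\Delta_a$), invokes the fact that a Gaussian joining is determined by these correlations to identify $\Gamma=\gamma\otimes\gamma$ (resp.\ $\Delta_a$), and then uses compactness to get convergence of the full sequence of joinings, hence of $\gamma(A\cap T^{-n_k}B)$ for all $A,B$. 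You instead work on the Koopman side, using the Wiener--It\^o chaos decomposition with $U_T|_{\mathcal H^{(k)}}\cong(U_T|_{H_1})^{\odot k}$ and the spectral model $H_1\cong L^2(\rho)$, together with the two elementary tensor-power stability lemmas (weak convergence to $0$, strong convergence to the identity, both preserved by symmetric powers of contractions). Each approach leans on one classical structural input about Gaussian systems -- the paper on determination and compactness of $\mathcal J_G(\gamma)$ (Lema\'nczyk et al.), you on the Fock-space structure of $L^2(\gamma)$ -- so neither is self-contained, and the two inputs are of comparable weight. The joining argument is softer and shorter, and sidesteps any convergence-of-operators bookkeeping; your argument is more quantitative in spirit, avoids subsequence/compactness reasoning entirely, treats all three parts uniformly (your reduction of (iii) to (ii) via the shifted sequence $(n_k-a)$ is cleaner than "the proofs are similar"), and the passage through strong convergence in (ii) makes the rigidity statement transparent. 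One small point to keep in mind: your dictionary step implicitly uses that set-wise mixing/rigidity extends to $L^2$ via density of (spans of) indicators and the uniform bound $\|U_T^{n_k}\|=1$; this is the same standard fact the paper uses tacitly when it writes $\int X_0T^{n_k}X_m\,\text{d}\gamma\to\int X_0\,\text{d}\gamma\int X_m\,\text{d}\gamma$, so it is not a gap, but it deserves the one-line justification you give it.
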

\begin{proof}
The proofs of (i), (ii), and (iii) are similar. We will only prove (i).\\

 Suppose first that $T$ has the mixing property along $(n_k)_{k\in\N}$. Then, for any $m\in\Z$,
$$\lim_{k\rightarrow\infty}\int_\mathbb T e^{2\pi i(n_k+m)x}\text{d}\rho=\lim_{k\rightarrow\infty}\int_{\R^\Z}X_0T^{n_k}X_m\text{d}\gamma=\int_{\R^\Z}X_0\text{d}\gamma\int_{\R^\Z}X_m\text{d}\gamma=0.$$
Thus, $\rho$ has the mixing property along $(n_k)_{k\in\N}$.\\
Suppose now that $\rho$ has the mixing property along $(n_k)_{k\in\N}$. Let $(k_j)_{j\in\N}$ be an increasing sequence in $\N$ such that $\lim_{j\rightarrow\infty}\Delta_{n_{k_j}}=\Gamma$ for some $\Gamma\in\mathcal J_G(\gamma)$. For any $n,m\in\Z$, we have 
\begin{multline*}
\int_{\R^\Z\times\R^\Z}X_n'X_m''\text{d}\Gamma=\lim_{j\rightarrow\infty}\int_{\R^\Z\times\R^\Z}X'_nX''_m\text{d}\Delta_{n_{k_j}}=\lim_{j\rightarrow\infty}\int_{\R^\Z}X_nT^{n_{k_j}}X_m\text{d}\gamma\\
=\lim_{j\rightarrow\infty}\int_{\R^\Z}X_nX_{(n_{k_j}+m)}\text{d}\gamma=\lim_{j\rightarrow\infty}\int_\mathbb T e^{2\pi i(n_{k_{j}}+(m-n))x}\text{d}\rho=0.
\end{multline*}
Thus, by \eqref{1.IndependentCorrelations}, $\Gamma=\gamma\otimes\gamma$. It now follows from the compactness of $\mathcal J_G(\gamma)$, that $\lim_{k\rightarrow\infty}\Delta_{n_k}=\gamma\otimes\gamma$. In other words, for any $A,B\in\mathcal A$,
$$\lim_{k\rightarrow\infty}\gamma(A\cap T^{-n_k}B)=\lim_{k\rightarrow\infty}\Delta_{n_k}(A\times B)=\gamma\otimes \gamma(A\times B)=\gamma(A)\gamma(B).$$
We are done.
\end{proof}
We now record for future use the following classical result (see \cite[page 191]{cornfeld1982ergodic} and Theorem 1 in \cite[page 368]{cornfeld1982ergodic}, for example).
\begin{prop}\label{1.TheWienerConsequence}
Let $(\R^\Z,\mathcal A,\gamma, T)$ be a Gaussian system and let $\rho$ be the spectral measure associated with it. The following are equivalent: (i) $\rho$ is continuous. (ii) $T$ is weakly mixing. (iii) $T$ is ergodic.
\end{prop}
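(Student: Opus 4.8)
I would prove the equivalences in the cycle $(ii)\Rightarrow(iii)\Rightarrow(i)\Rightarrow(ii)$. The first link is free of content, since weak mixing always implies ergodicity, so the work is in $(iii)\Rightarrow(i)$ and $(i)\Rightarrow(ii)$. Both use the identification already implicit in \eqref{1.CorrelationsCoincide}: writing $\widehat\rho(n)=\int_{\mathbb T}e^{2\pi inx}\,\text{d}\rho(x)$, one has $\langle T^nX_0,X_0\rangle_{L^2(\gamma)}=\widehat\rho(n)$ for all $n\in\Z$ (using that $TX_n=X_{n+1}$ and that $\rho$ is symmetric); since $\{T^nX_0:n\in\Z\}$ spans a dense subspace of $H_1$, the Koopman operator $T$ restricted to $H_1$ is a cyclic unitary whose maximal spectral type is $\rho$. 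In particular, $T|_{H_1}$ (complexified) has an eigenvector with eigenvalue $e^{2\pi i\theta_0}$ precisely when $\rho(\{\theta_0\})>0$.

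For $(iii)\Rightarrow(i)$ I would prove the contrapositive. Suppose $\rho(\{\theta_0\})>0$ for some $\theta_0\in\mathbb T$. By the previous remark there is a nonzero $f$ in the complexification of $H_1$ with $Tf=e^{2\pi i\theta_0}f$; then $\overline f$ also lies there and $T\overline f=e^{-2\pi i\theta_0}\overline f$, so $|f|^2=f\overline f\in L^2(\gamma)$ satisfies $T(|f|^2)=|f|^2$. Moreover $|f|^2$ is not a.e.\ constant: its real and imaginary parts belong to the Gaussian space $H_1$, are jointly Gaussian, and are not both identically zero, so $|f|^2$ is a non-degenerate quadratic form in a Gaussian vector and has strictly positive variance. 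Hence $T$ possesses a non-constant invariant function and is not ergodic.

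For $(i)\Rightarrow(ii)$ I would route through \cref{1.NotionsOfMixing}(i). By Wiener's lemma, $\frac1N\sum_{|n|\le N}|\widehat\rho(n)|^2\to\sum_{x\in\mathbb T}\rho(\{x\})^2$, so continuity of $\rho$ is equivalent to $\frac1N\sum_{|n|\le N}|\widehat\rho(n)|^2\to0$. Assuming the latter, fix $M\in\N$; then $\frac1N\sum_{n\le N}\sum_{|m|\le M}|\widehat\rho(n+m)|^2\to0$, so by the Koopman--von Neumann lemma there is a set $S_M\subseteq\N$ of density one along which $\sum_{|m|\le M}|\widehat\rho(n+m)|^2\to0$; diagonalizing over $M$ yields a single increasing sequence $(n_k)_{k\in\N}$, still of density one, with $\lim_{k\to\infty}\widehat\rho(n_k+m)=0$ for every fixed $m\in\Z$. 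This says exactly that $\rho$ has the mixing property along $(n_k)$, so \cref{1.NotionsOfMixing}(i) gives that $T$ has the mixing property along $(n_k)$. Since $(n_k)$ has density one, for every $A,B\in\mathcal B$ the bounded quantities $|\mu(A\cap T^{-n}B)-\mu(A)\mu(B)|$ then have vanishing Cesàro averages, i.e.\ $T$ is weakly mixing.

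The step I expect to be fussiest is the passage in the third paragraph from the single Cesàro limit for $|\widehat\rho(\cdot)|^2$ to one density-one sequence that annihilates all shifted Fourier coefficients simultaneously; this is the iterated use of the Koopman--von Neumann lemma together with a diagonal argument, and should be written carefully. The remaining ingredients --- Wiener's lemma, and the implication ``convergence along a density-one set $\Rightarrow$ convergence of Cesàro averages'' for bounded sequences --- are routine. I would also note the classical alternative (cf.\ \cite[Ch.~8 and p.~368]{cornfeld1982ergodic}): pass to the full Wiener chaos decomposition $L^2(\gamma)=\bigoplus_{n\ge0}H_n$, observe that $T|_{H_n}$ has maximal spectral type the convolution power $\rho^{*n}$, and conclude $(i)\Rightarrow(ii)$ at once from the fact that convolving a non-atomic measure with anything stays non-atomic, while an atom of $\rho$ directly produces the non-constant invariant function used above.
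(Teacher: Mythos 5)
Your proposal is correct, but it is worth noting that the paper does not prove \cref{1.TheWienerConsequence} at all: it simply records the proposition as a classical fact with references to \cite[page 191 and Theorem 1, page 368]{cornfeld1982ergodic}, where the standard argument runs through the full Wiener chaos decomposition $L^2(\gamma)=\bigoplus_{n\geq 0}H_n$ and the fact that the maximal spectral type of $T$ on $H_n$ is the convolution power $\rho^{*n}$ (the alternative you mention at the end). Your route is genuinely different and essentially self-contained relative to the paper: for (iii)$\Rightarrow$(i) you use the cyclicity of $X_0$ in the complexified $H_1$ with spectral measure $\rho$, so an atom produces an eigenfunction $f$ and the invariant, non-constant function $|f|^2$ (the non-constancy argument via the unbounded support of a non-degenerate Gaussian is fine); for (i)$\Rightarrow$(ii) you combine Wiener's lemma, Koopman--von Neumann, and a diagonal argument to produce a density-one sequence along which $\rho$ has the mixing property, and then invoke \cref{1.NotionsOfMixing}(i) --- which is proved earlier in the paper via Gaussian self-joinings, so there is no circularity --- to transfer mixing along that sequence to $T$, and finally use boundedness to pass from convergence along a density-one set to vanishing Ces\`aro averages, which is exactly the paper's footnote definition of weak mixing. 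What your approach buys is that all the ``higher chaos'' difficulty is absorbed into the already-established joining argument of \cref{1.NotionsOfMixing}, at the cost of the slightly fussy nested/diagonal density-one construction you rightly flag; the classical chaos-decomposition proof is more structural and gives the stronger spectral statement ($\rho$ non-atomic forces every $\rho^{*n}$ non-atomic), but requires machinery the paper never develops.
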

We conclude this section with an easy consequence of \cref{1.NotionsOfMixing} which illustrates the connection between non-trivial Gaussian systems  and $\text{Aut}([0,1],\mathcal B,\mu)$.
\begin{prop}\label{1.EquivalenceOftypesOfSystems}
Let $(n_k)_{k\in\N}$ be a sequence in $\Z$, let $\xi\in\{0,1\}$, and let $a\in\Z$. The following are equivalent:
\begin{enumerate}[(i)]
\item There exists a non-trivial Gaussian system $(\R^\Z,\mathcal A,\gamma,T)$ such that for any $A,B\in\mathcal A$,
\begin{equation}\label{1.LimitAlongSystem}
    \lim_{k\rightarrow\infty}\gamma(A\cap T^{-n_k}B)=(1-\xi)\gamma(A\cap T^{-a} B)+\xi\gamma(A)\gamma(B).
\end{equation}
\item There exists an $S\in\text{Aut}([0,1],\mathcal B,\mu)$ such that for any $A,B\in\mathcal B$,
\begin{equation}\label{1.LimitAlongTransformation}
    \lim_{k\rightarrow\infty}\mu(A\cap S^{-n_k}B)=(1-\xi)\mu(A\cap S^{-a} B)+\xi\mu(A)\mu(B).
\end{equation}
\end{enumerate}
\end{prop}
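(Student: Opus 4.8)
The plan is to prove the two implications separately, with the direction (i)$\Rightarrow$(ii) being essentially immediate and (ii)$\Rightarrow$(i) requiring the construction of a Gaussian system out of an arbitrary transformation. For (i)$\Rightarrow$(ii), suppose a non-trivial Gaussian system $(\R^\Z,\mathcal A,\gamma,T)$ satisfies \eqref{1.LimitAlongSystem}. Since $\gamma$, viewed as a Borel probability measure on $[0,1]$ (after identifying $\R^\Z$ with $[0,1]$ via a measure isomorphism, which exists because $(\R^\Z,\mathcal A,\gamma)$ is a standard probability space and $\gamma$ is non-atomic — here is where non-triviality is used, via \cref{1.TheWienerConsequence} the spectral measure is either trivial or has a continuous part, and a non-trivial Gaussian measure is always non-atomic), the shift $T$ transports to some $S\in\text{Aut}([0,1],\mathcal B,\mu)$, and \eqref{1.LimitAlongTransformation} is just \eqref{1.LimitAlongSystem} rewritten under the isomorphism.

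The substantive direction is (ii)$\Rightarrow$(i). Given $S\in\text{Aut}([0,1],\mathcal B,\mu)$ satisfying \eqref{1.LimitAlongTransformation}, I would build a Gaussian system whose spectral measure $\rho$ records the relevant correlations. The natural choice: pick any $f\in L^2(\mu)$ with $\int f\,d\mu=0$ and $\int f^2\,d\mu=1$, for instance a suitable function so that the $S$-orbit of $f$ is "rich enough", and let $\rho$ be the spectral measure of $f$ with respect to the Koopman operator of $S$ (by Bochner/Herglotz, $\widehat{\rho}(n)=\int f\cdot (f\circ S^n)\,d\mu$), symmetrized if necessary. Then form the Gaussian system $(\R^\Z,\mathcal A,\gamma,T)$ associated with $\rho$. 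By construction $\widehat\rho(n_k+m)=\int (f\circ S^m)\cdot(f\circ S^{n_k})\,d\mu$, and applying \eqref{1.LimitAlongTransformation} to the sets on which $f$ is built — or more cleanly, passing to the $L^2$ statement equivalent to \eqref{1.LimitAlongTransformation} and testing against $f$ and $f\circ S^m$ — gives that $\rho$ satisfies condition (2) of part (iii) of \cref{1.NotionsOfMixing} when $\xi=0$, and the mixing condition when $\xi=1$. (For $\xi=1$ one needs $\int f\,d\mu=0$ so that $\int f\cdot(f\circ S^{n_k})\,d\mu\to 0$.) Invoking \cref{1.NotionsOfMixing}(iii) (resp. (i)) then yields \eqref{1.LimitAlongSystem} for the Gaussian system $T$, and $\rho\neq 0$ since $\int f^2\,d\mu=1>0$, so the system is non-trivial.

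The main obstacle is verifying that the single function $f$ and its image measure $\rho$ actually capture enough of $S$ to make the spectral-measure conditions of \cref{1.NotionsOfMixing} equivalent to \eqref{1.LimitAlongTransformation}: the limit \eqref{1.LimitAlongTransformation} quantifies over \emph{all} $A,B\in\mathcal B$, whereas $\rho$ only sees the cyclic subspace generated by $f$. This is not a real difficulty, because \cref{1.NotionsOfMixing}(iii) is a statement purely about $\rho$ and the Gaussian system $T$ it generates — we only need \eqref{1.LimitAlongTransformation} to \emph{imply} the Fourier-coefficient asymptotics \eqref{1.a-rigidityForSpectralMeasure} for this particular $\rho$, which follows by testing \eqref{1.LimitAlongTransformation} (in its $L^2$ form) against $g=f$ and $h=f\circ S^m$, using $\widehat\rho(\ell)=\int g\cdot(g\circ S^\ell)\,d\mu$. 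A minor bookkeeping point is symmetry of $\rho$: if the orbit correlations are not already symmetric one replaces $\rho$ by $\tfrac12(\rho+\check\rho)$, or equivalently works with a real $f$, for which $\int f\cdot(f\circ S^n)\,d\mu=\int f\cdot(f\circ S^{-n})\,d\mu$ automatically by measure-preservation; I would take $f$ real so symmetry is free. I would close by remarking that the $a=0$, $\xi\in\{0,1\}$ cases of this proposition are exactly what is needed to pass between the Gaussian constructions of Sections \ref{Section3NoConstantTerm}--\ref{Section4ArbitraryZPolynomials} and the statements about $\text{Aut}([0,1],\mathcal B,\mu)$ in \cref{0.MainResultExistence}.
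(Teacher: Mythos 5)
Your proposal is correct and takes essentially the same route as the paper: (i)$\Rightarrow$(ii) via the measure-theoretic isomorphism of a non-trivial Gaussian system with $([0,1],\mathcal B,\mu,S)$, and (ii)$\Rightarrow$(i) by choosing a real mean-zero $f\in L^2(\mu)$, defining $\rho$ by $\int_{\mathbb T}e^{2\pi ikx}\,\text{d}\rho(x)=\int f\cdot(f\circ S^k)\,\text{d}\mu$ (symmetric since $f$ is real), deducing the required Fourier asymptotics by testing the $L^2$ form of \eqref{1.LimitAlongTransformation} against $f$ and $f\circ S^{-m}$, and then invoking \cref{1.NotionsOfMixing} for the Gaussian system associated with $\rho$. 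Only a minor remark: the non-atomicity of $\gamma$ in (i)$\Rightarrow$(ii) does not come from \cref{1.TheWienerConsequence} (the spectral measure need not have a continuous part); it follows simply because non-triviality forces some $X_n$ to be a non-degenerate Gaussian random variable.
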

\begin{proof}
 (i)$\implies$(ii): Note that any non-trivial Gaussian system is measure theoretically isomorphic to $([0,1],\mathcal B,\mu,S)$ for some $S\in \text{Aut}([0,1],\beta,\mu)$ (see \cite[Theorem 2.1]{waltersIntroduction}, for example).\\
 (ii)$\implies$(i): Let $f\in L^2(\mu)$ be a non-zero real-valued function such that $\int_{[0,1]}f\text{d}\mu=0$ and let $\rho$ be the positive finite Borel measure satisfying
 $$\int_{[0,1]} fS^kf\text{d}\mu=\int_\mathbb T e^{2\pi i kx}\text{d}\rho(x)$$
 for each $k\in\Z$. Since $\int_\mathbb T e^{2\pi i kx}\text{d}\rho(x)$ is a real number for each $k\in\Z$, we have that $\rho$ is symmetric.\\
 By \eqref{1.LimitAlongTransformation}, for any $g\in L^2(\mu)$,
 $$\lim_{k\rightarrow\infty}\int_{[0,1]}gS^{n_k}f\text{d}\mu=(1-\xi)\int_{[0,1]}gS^af\text{d}\mu+\xi\int_{[0,1]}g\text{d}\mu\int_{[0,1]}f\text{d}\mu.$$
 Thus, for any $m\in\Z$,
 \begin{multline*}
 \lim_{k\rightarrow\infty}\int_\mathbb T e^{2\pi i (n_k+m)x}\text{d}\rho(x)=\lim_{k\rightarrow\infty}\int_{[0,1]}fS^{n_k+m}f\text{d}\mu=\lim_{k\rightarrow\infty}\int_{[0,1]}S^{-m}fS^{n_k}f\text{d}\mu\\
 =(1-\xi)\int_{[0,1]}S^{-m}fS^af\text{d}\mu+\xi\int_{[0,1]}S^{-m}f\text{d}\mu\int_{[0,1]}f\text{d}\mu=(1-\xi)\int_{[0,1]}fS^{a+m}f\text{d}\mu\\
 =(1-\xi)\int_\mathbb T e^{2\pi i(a+m)x}\text{d}\rho(x).
 \end{multline*}
 Taking $(\R^\Z,\mathcal A,\gamma, T)$ to be the non-trivial Gaussian system associated with $\rho$ in (i), we see that \eqref{1.LimitAlongSystem} holds.
\end{proof}
\section{A version of \cref{0.MainResultExistence} for polynomials having zero constant term}\label{Section3NoConstantTerm}
In this section we prove a special case of \cref{0.MainResultExistence}  which deals with polynomials $v_1,...,v_N$ in $\Z[x]$ satisfying $v_j(0)=0$ for each $j\in\{1,...,N\}$. It will be stated in the language of Gaussian systems (see \cref{3.IndependentPoly} below). Unlike the proof of \cref{0.MainResultExistence} in its full generality, the proof of this special case  utilizes a simple and explicit construction for the spectral measures associated with each of the Gaussian systems guaranteed to exist in \cref{3.IndependentPoly}. As demonstrated in \cite[Proposition 7.1]{BerZel_JCTA_iteratedDifferences2021} and in Section \ref{Section7OtherFamilies} of  this paper, this method can be used to provide examples of measure preserving systems with various kinds of asymptotic behavior.
We remark that while \cref{0.MainResultExistence} deals with automorphisms of $[0,1]$ the formulation of \cref{3.IndependentPoly} deals  with non-trivial Gaussian systems $(\R^\Z,\mathcal A,\gamma,T)$. This distinction is immaterial due to a slight modification of  \cref{1.EquivalenceOftypesOfSystems}.

\begin{thm}[Cf. \cref{0.MainResultExistence}]\label{3.IndependentPoly}
Let $N\in\N$, let $(m_k)_{k\in\N}$ be an increasing sequence in $\N$ with $k!|m_k$ for each $k\in\N$,  and let the non-constant polynomials $v_1,...,v_N\in\Z[x]$ be $\Q$-linearly independent and such that for each $j\in\{1,...,N\}$, $v_j(0)=0$. Then there exists a subsequence  $(n_k)_{k\in\N}$ of $(m_k)_{k\in\N}$ such that for any $\vec \xi=(\xi_1,...,\xi_N)\in\{0,1\}^N$, there exists a non-trivial weakly mixing Gaussian system  $(\R^\Z,\mathcal A,\gamma_{\vec \xi},T_{\vec \xi})$ with the property that for each $j\in\{1,...,N\}$ and any $A,B\in\mathcal A$,
\begin{equation}\label{3.MixingOrRigidExpression}
\lim_{k\rightarrow\infty}\gamma_{\vec\xi}(A\cap T_{\vec \xi}^{-v_j(n_k)}B)=(1-\xi_j)\gamma_{\vec \xi}(A\cap B)+\xi_j\gamma_{\vec \xi}(A)\gamma_{\vec \xi}(B).
\end{equation}
\end{thm}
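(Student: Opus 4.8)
The plan is to build each spectral measure $\rho_{\vec\xi}$ explicitly as a (weighted) sum of translates of a single continuous ``base'' measure, so that the desired mixing/rigidity behavior along $(v_j(n_k))_{k\in\N}$ is encoded directly into the Fourier coefficients. By \cref{1.NotionsOfMixing} and \cref{1.TheWienerConsequence}, it suffices to produce, for each $\vec\xi$, a symmetric continuous positive finite Borel measure $\rho_{\vec\xi}$ on $\mathbb T$ together with a single subsequence $(n_k)$ of $(m_k)$ (not depending on $\vec\xi$) such that for every $m\in\Z$ and every $j$ with $\xi_j=1$ we have $\lim_k\int_{\mathbb T}e^{2\pi i(v_j(n_k)+m)x}\,\mathrm d\rho_{\vec\xi}(x)=0$, while for every $j$ with $\xi_j=0$ we have $\lim_k\int_{\mathbb T}e^{2\pi i v_j(n_k)x}\,\mathrm d\rho_{\vec\xi}(x)=\rho_{\vec\xi}(\mathbb T)$ and more generally $\lim_k\int_{\mathbb T}e^{2\pi i(v_j(n_k)+m)x}\,\mathrm d\rho_{\vec\xi}(x)=\int_{\mathbb T}e^{2\pi i mx}\,\mathrm d\rho_{\vec\xi}(x)$ (this last being rigidity of $\rho_{\vec\xi}$ along $(v_j(n_k))_k$).

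First I would fix a continuous symmetric probability measure $\nu$ on $\mathbb T$ whose Fourier coefficients $\hat\nu(n)=\int e^{-2\pi i nx}\mathrm d\nu$ tend to $0$ along \emph{every} subsequence one will eventually extract — for instance a measure whose Fourier coefficients vanish at infinity would be too weak, but a lacunary Riesz-product-type measure, or simply a measure supported on a well-chosen set, can be arranged so that $\hat\nu(v_j(n))\to 0$ as $n\to\infty$ after a diagonal extraction. Concretely, for the ``mixing'' coordinates one wants $\hat\nu(v_j(n_k))\to 0$; since the $v_j$ are non-constant with $v_j(0)=0$, along a generic/diagonal choice of $(n_k)$ inside $(m_k)$ this can be secured for all $j$ simultaneously. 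For the ``rigid'' coordinates one instead builds $\rho_{\vec\xi}$ so that $v_j(n_k)x\to 0$ in $\mathbb T$ for $\rho_{\vec\xi}$-a.e.\ $x$: this is where the hypothesis $k!\mid m_k$ is used, exactly as in the $N=1$ discussion in the introduction — one picks $\rho_{\vec\xi}$ to put its mass on points $x$ that are ``well-approximable'' relative to the values $v_j(n_k)$, using that $v_j(m_k)$ is divisible by a large factorial so that $e^{2\pi i v_j(m_k)p/q!}=1$ for all large $k$ once $x=p/q!$-type points are chosen, and then taking a suitable continuous measure (e.g.\ an infinite convolution / Riesz product along the scales $q!$) supported near such points. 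The $\Q$-linear independence of $v_1,\dots,v_N$ (equivalently, asymptotic independence of the sequences $(v_j(k))_k$) is what lets one \emph{decouple} the coordinates: for a fixed $\vec\xi$ one superimposes a ``rigidity part'' $\rho^{\mathrm{rig}}_{\vec\xi}$ handling all $j$ with $\xi_j=0$ and a ``mixing part'' $\rho^{\mathrm{mix}}_{\vec\xi}$ handling all $j$ with $\xi_j=1$, and asymptotic independence guarantees that the rigidity scales for one group do not accidentally produce rigidity (or fail to produce mixing) for the other group — because a nontrivial integer combination $\sum a_j v_j(n_k)$ goes to infinity, decorrelating the relevant exponential sums. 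Finally I would carry out a single diagonal argument over $k\in\N$, $m$ in a countable exhausting family, the finitely many vectors $\vec\xi\in\{0,1\}^N$, and the finitely many $j$, extracting one subsequence $(n_k)$ of $(m_k)$ that works for all $\vec\xi$ at once, and then invoke \cref{1.NotionsOfMixing}(i),(ii) and \cref{1.TheWienerConsequence} to pass from the spectral statements to the asserted convergence \eqref{3.MixingOrRigidExpression} for the associated weakly mixing Gaussian systems.

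The main obstacle I anticipate is the simultaneous control: one must exhibit, for each $\vec\xi$, a single continuous measure whose Fourier transform is \emph{small} along $(v_j(n_k))_k$ for the mixing indices and \emph{rigid} along $(v_j(n_k))_k$ for the rigidity indices, using the \emph{same} subsequence $(n_k)$ for all $\vec\xi$. Making the rigidity part explicit — choosing the supporting points/scales so that $v_j(n_k)x\to 0$ in $\mathbb T$ for all rigidity indices $j$ while the measure stays continuous (hence the system weakly mixing) — is the technical heart, and it is precisely here that the divisibility condition $k!\mid m_k$ and the $\Q$-linear independence of the $v_j$ both get used. Once the measures are constructed the rest (symmetry, finiteness, the diagonal extraction, and translation from $\rho$ to $\gamma$) is routine given the machinery already set up in \cref{Section2GaussianBackground}.
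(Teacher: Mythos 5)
Your reduction to spectral measures via \cref{1.NotionsOfMixing} and \cref{1.TheWienerConsequence} is the right first step, but the architecture you propose for building $\rho_{\vec\xi}$ has a genuine gap. If you superimpose a ``rigidity part'' and a ``mixing part'', say $\rho_{\vec\xi}=\rho^{\mathrm{rig}}_{\vec\xi}+\rho^{\mathrm{mix}}_{\vec\xi}$, then Fourier coefficients add, so for an index $j$ with $\xi_j=0$ the required convergence $\hat\rho_{\vec\xi}(v_j(n_k)+m)\to\hat\rho_{\vec\xi}(m)$ forces the \emph{mixing} part itself to be rigid along $(v_j(n_k))_k$, while for an index $j$ with $\xi_j=1$ the required decay forces the \emph{rigidity} part itself to have Fourier coefficients tending to $0$ along $(v_j(n_k))_k$. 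Hence each summand must satisfy all $N$ constraints simultaneously and the decomposition buys nothing: the whole difficulty is to produce \emph{one} continuous measure that is rigid along some of the sequences $(v_j(n_k))_k$ and mixing along the others, which is exactly the step you defer as ``the technical heart''. Moreover, the role you assign to $\Q$-linear independence --- ``decoupling'' the two groups because $\sum_j a_jv_j(n_k)\to\infty$ --- is a non sequitur here: divergence of integer combinations of the sequences says nothing about the behavior of the Fourier transform of a fixed measure along one group of sequences versus the other.

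In the paper's proof the missing step is carried out by a single explicit construction in which linear independence enters through linear algebra rather than through decorrelation of exponential sums: writing $v_j(x)=\sum_{s=1}^d a_{j,s}x^s$ and letting $D=(a_{j,s})$, one solves $D\vec x_{\vec\xi}=\vec b_{\vec\xi}$ with $b^{(\vec\xi)}_j=1-\frac{\xi_j}{2}$ (possible because $D$ has rank $N$), chooses a subsequence $(n_k)$ of $(m_k)$ with strong divisibility and growth properties, and takes $\sigma_{\vec\xi}$ to be the pushforward of the $(\frac12,\frac12)$-Bernoulli measure under $\omega\mapsto\sum_{t\geq 1}\sum_{s=1}^d x^{(\vec\xi)}_s n_t^{-s}\,\omega(t)$, symmetrized by taking the difference of two independent copies so that the Fourier coefficients become $|\hat\rho|^2$. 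The divisibility conditions make $v_j(n_k)$ times this series congruent, up to $o(1)$, to $b^{(\vec\xi)}_j\omega(k)$ mod $1$, which is $\equiv 0$ when $\xi_j=0$ (rigidity) and takes the values $0,\frac12$ with equal probability when $\xi_j=1$ (mixing, using mixing of the shift on $\{0,1\}^\N$ to factor the integral); the same estimate yields continuity of $\sigma_{\vec\xi}$, hence weak mixing of the Gaussian system. Without this (or some substitute) device encoding all coordinates of $\vec\xi$ into a single measure, your outline does not yet constitute a proof.
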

\begin{proof}
By \cref{1.NotionsOfMixing} and \cref{1.TheWienerConsequence}, it suffices to show that there exist a subsequence $(n_k)_{k\in\N}$ of  $(m_k)_{k\in\N}$ and continuous Borel probability measures $\sigma_{\vec \xi}$ on $\mathbb T=[0,1)$, $\vec \xi\in\{0,1\}^N$,  such that for each $\vec\xi=(\xi_1,...,\xi_N)\in\{0,1\}^N$, the  sequence
$$a^{(\vec \xi)}_k=\int_\mathbb T e^{2\pi ikx}\text{d}\sigma_{\vec \xi}(x),\,k\in\Z,$$
is a real-valued sequence with $a^{(\vec \xi)}_0=1$ (which implies that $\sigma_{\vec \xi}$ is symmetric and non-zero),  and for each $j\in\{1,...,N\}$ and any $m\in\Z$,
\begin{equation}\label{3.TargetProperties}
    \lim_{k\rightarrow\infty}\int_\mathbb T e^{2\pi i (v_j(n_k)+m)x}\text{d}\sigma_{\vec \xi}(x)=(1-\xi_j)\int_\mathbb T e^{2\pi i mx}\text{d}\sigma_{\vec \xi}(x).
\end{equation}
We now proceed to construct the probability measures $\sigma_{\vec \xi}$, $\vec \xi\in\{0,1\}^N$, with the desired properties. Let
$$d=\max _{1\leq j\leq N}\deg v_j$$
and for $j\in\{1,...,N\}$, let $a_{j,1},...,a_{j,d}\in\Z$ be such that
\begin{equation}
    v_j(x)=\sum_{s=1}^da_{j,s}x^s.
\end{equation}
We define the  $N\times d$ matrix $D$ by  
\begin{equation}\label{3.Adefn}
(D)_{j,s}=a_{j,s}
\end{equation}
for $j\in\{1,...,N\}$ and $s\in\{1,...,d\}$. For each $j\in\{1,...,N\}$ and each $\vec \xi=(\xi_1,...,\xi_N)\in\{0,1\}^N$, let $b^{(\vec \xi)}_j=1-\frac{\xi_j}{2}$  and set 
\begin{equation}\label{3.SolutionVector}
    \vec b_{\vec \xi}=(b^{(\vec \xi)}_1,...,b^{(\vec \xi)}_N).
\end{equation}
Since $v_1,...,v_N$ are linearly independent, the rank of $D$ is $N$. Hence, for each $\vec \xi\in\{0,1\}^N$, there exists a non-zero $\vec x_{\vec \xi}=(x^{(\vec \xi)}_1,...,x^{(\vec \xi)}_d)\in \Q^d$ satisfying
\begin{equation}\label{3.DefnOfX}
D\vec x_{\vec \xi}=\vec b_{\vec \xi}.
\end{equation}
Let $n_0\in\N$ be such that $n_0>1$. Choose a subsequence $(n_k)_{k\in\N}$ of $(m_k)_{k\in\N}$ with the property that for any $\vec \xi=(\xi_1,...,\xi_N)\in\{0,1\}^N$, any $j\in\{1,...,d\}$, and any $k\in\N$,  (a) $dn_0|x_j^{(\vec \xi)}|<n_1$, (b) $x_j^{(\vec \xi)}n_k\in\Z$,  and (c) $(2dn_{k-1}^{d+1})|n_{k}$.\\ 
Let $\{0,1\}^\N$ be endowed with the product topology and let $\mathbb P$ be the $(\frac{1}{2},\frac{1}{2})$-probability measure on $\{0,1\}^\N$. For each $\vec \xi\in\{0,1\}^N$, we define $f_{\vec \xi}:\{0,1\}^\N\times \{0,1\}^\N\rightarrow\mathbb T$ by 
\begin{equation}\label{3.f_xiDefn}
f_{\vec  \xi}(\omega_1,\omega_2)=\sum_{t=1}^\infty\sum_{s=1}^d\frac{x^{(\vec \xi)}_s}{n_t^s}(\omega_1(t)-\omega_2(t))\mod 1.
\end{equation}
Since for any $\omega_1,\omega_2\in\{0,1\}^\N$ and any $k\in\N$, $|\omega_1(k)-\omega_2(k)|\leq1$, (a) implies that for any $t\in\N$ and any $s\in\{1,...,d\}$,
$$\left|\frac{x^{(\vec \xi)}_s}{n_t^s}(\omega_1(t)-\omega_2(t))\right|\leq \frac{|x^{(\vec \xi)}_s|}{n_t^s}\leq \frac{n_1}{dn_0n_t^s}.$$ 
By (c), 
\begin{equation}\label{3.Bound}
 \sum_{t=1}^\infty\sum_{s=1}^d\frac{n_1}{dn_0n_t^s}\leq \sum_{t=1}^\infty\frac{dn_1}{dn_0n_t}
=\frac{1}{n_0}\sum_{t=1}^\infty\frac{n_1}{n_t}\leq \frac{1}{n_0}\sum_{t=0}^\infty\frac{1}{n_1^t}=\frac{1}{n_0}\frac{n_1}{n_1-1}\leq 1.
\end{equation}
Thus, by Weierstrass M-test, the function  $g_{\vec \xi}:\{0,1\}^\N\times\{0,1\}^\N\rightarrow \R$ given by 
$$g_{\vec \xi}(\omega_1,\omega_2)= \sum_{t=1}^\infty\sum_{s=1}^d\frac{x^{(\vec \xi)}_s}{n_t^s}(\omega_1(t)-\omega_2(t))$$
is well-defined and continuous.\\
Let $\phi$ be the canonical map from $\R$ to $[0,1)=\R/\Z$ (so $\phi(x)=x\mod\,1$ and $\phi$ is continuous). Since  $f_{\vec \xi}=\phi\circ g_{\vec \xi}$, we have that $f_{\vec \xi}$ is continuous and hence measurable. For each $\vec \xi \in \{0,1\}^N$, we will let 
$$\sigma_{\vec \xi}=(\mathbb P\times \mathbb P)\circ f_{\vec \xi}^{-1}.$$

Fix now $\vec \xi=(\xi_1,...,\xi_N)\in\{0,1\}^N$. Clearly $\sigma_{\vec \xi}$ is a Borel probability measure on $\mathbb T$ (and so,  $a^{(\vec \xi)}_0=1$). All it remains to show is that (i) $\sigma_{\vec \xi}$ is continuous, (ii) that $(a^{(\vec \xi)}_k)_{k\in\Z}$ is real-valued,  and (iii) that $\sigma_{\vec \xi}$ satisfies \eqref{3.TargetProperties}. For this let $f:\{0,1\}^\N\rightarrow\R$ be defined by 
$$f(\omega)=\sum_{t=1}^\infty\sum_{s=1}^d\frac{x^{(\vec \xi)}_s}{n_t^s}\frac{\omega(t)}{2}.$$
(Note that by an inequality similar to \eqref{3.Bound}, one can show that $f$ is well-defined and continuous).\\

\underline{(i)}: We will now show that $\sigma_{\vec \xi}$ is continuous, but first we need some estimates.\\ 

Combining (b) and (c) we obtain that for each $\ell\in\{1,...,d\}$, each $\omega\in\{0,1\}^\N$, and each $k>1$,
\begin{multline}\label{3.FirstTermsAreIntegers} 
n_k^{\ell}f(\omega)\,\text{mod}\,1      \equiv   n_k^\ell(\sum_{t=1}^\infty\sum_{s=1}^d\frac{x^{(\vec \xi)}_s}{n_t^s}\frac{\omega(t)}{2})\equiv   \sum_{t=1}^\infty\sum_{s=1}^d\frac{n_k^\ell x^{(\vec \xi)}_s}{n_t^s}\frac{\omega(t)}{2}\\
\equiv      \underbrace{\sum_{t=1}^{k-1}\sum_{s=1}^d\frac{n_k^\ell x^{(\vec \xi)}_s}{n_t^{s}}\frac{\omega(t)}{2})}_{\text{This is an integer}}+\underbrace{\sum_{s=1}^{\ell-1}\frac{n_k^\ell x^{(\vec \xi)}_s}{n_k^{s}}\frac{\omega(k)}{2}}_{\text{This 
is an integer}}+\sum_{s=\ell}^d\frac{x^{(\vec 
\xi)}_s}{n_k^{s-\ell}}\frac{\omega(k)}{2}+\sum_{t=k+1}^\infty\sum_{s=1}^d\frac{n_k^\ell x^{(\vec 
\xi)}_s}{n_t^{s}}\frac{\omega(t)}{2}\\
\equiv   x^{(\vec \xi)}_\ell\frac{\omega(k)}{2}+\sum_{s=\ell+1}^d\frac{x^{(\vec \xi)}_s}{n_k^{s-\ell}}\frac{\omega(k)}{2}+\sum_{t=k+1}^\infty\sum_{s=1}^d\frac{n_k^\ell x^{(\vec \xi)}_s}{n_t^{s}}\frac{\omega(t)}{2}\mod 1.
\end{multline}
By  (a) and  (c) we have
\begin{multline}\label{3.BoundOnTheTail}
|\sum_{s=\ell+1}^d\frac{x^{(\vec \xi)}_s}{n_k^{s-\ell}}\frac{\omega(k)}{2}+\sum_{t=k+1}^\infty\sum_{s=1}^d\frac{n_k^\ell x^{(\vec \xi)}_s}{n_t^{s}}\frac{\omega(t)}{2}|\leq \sum_{s=\ell+1}^d\frac{n_1}{n_k^{s-\ell}}+\sum_{t=k+1}^\infty\frac{n_k^\ell n_1}{n_t}\\
\leq \sum_{t=1}^d\frac{n_1}{n_k^t}+\sum_{t=1}^\infty\frac{n_1}{n_k^t}\leq 2n_1\sum_{t=1}^\infty\frac{1}{n_k^t}=\frac{2n_1}{n_k-1}.
\end{multline}
(Note that when $\ell=d$, $|\sum_{t=k+1}^\infty\sum_{s=1}^d\frac{n_k^\ell x^{(\vec \xi)}_s}{n_t^{s}}\frac{\omega(t)}{2}|<\frac{2n_1}{n_k-1}$ also holds.)\\

Denote the distance to the closest integer by $\|\cdot\|$ (so for any $r\in\R$, $\|r\|=\inf_{n\in\Z}|r-n|$ and, in particular, $\|r\|\leq |r|$). Consider a polynomial with integer coefficients $v(n)=\sum_{\ell=1}^da_\ell n^\ell$. By \eqref{3.FirstTermsAreIntegers} and \eqref{3.BoundOnTheTail}, for any $k>1$ and any $\omega\in\{0,1\}^\N$, 
\begin{multline*}
\|v(n_k)f(\omega)-\sum_{\ell=1}^da_{\ell}x^{(\vec \xi)}_\ell\frac{\omega(k)}{2}\|
=\|\sum_{\ell=1}^d a_\ell[n_k^{\ell}f(\omega)-x^{(\vec \xi)}_\ell\frac{\omega(k)}{2}]\| 
\leq \sum_{\ell=1}^d|a_\ell|\left\|n_k^\ell f(\omega)-x^{(\vec \xi)}_\ell\frac{\omega(k)}{2}\right\|\\
=\sum_{\ell=1}^d|a_{\ell}|\left\|\sum_{s=\ell+1}^d\frac{x^{(\vec \xi)}_s}{n_k^{s-\ell}}\frac{\omega(k)}{2}+\sum_{t=k+1}^\infty\sum_{s=1}^d\frac{n_k^\ell x^{(\vec \xi)}_s}{n_t^{s}}\frac{\omega(t)}{2}\right\|\\
\leq  \sum_{\ell=1}^d|a_{\ell}|\left|\sum_{s=\ell+1}^d\frac{x^{(\vec \xi)}_s}{n_k^{s-\ell}}\frac{\omega(k)}{2}+\sum_{t=k+1}^\infty\sum_{s=1}^d\frac{n_k^\ell x^{(\vec \xi)}_s}{n_t^{s}}\frac{\omega(t)}{2}\right| \leq \sum_{\ell=1}^d|a_{\ell}|\left(\frac{2n_1}{n_k-1}\right).
\end{multline*}
Thus, for any $\epsilon>0$ there exists $k_\epsilon\in\N$ such  that for any $k>k_\epsilon$ and any  $\omega\in\{0,1\}^\N$,
\begin{equation}\label{3.ImportantForTheRemark}
\|v(n_k)f(\omega)-\sum_{\ell=1}^da_{\ell}x^{(\vec \xi)}_\ell\frac{\omega(k)}{2}\|<\epsilon.
\end{equation}

Pick now $\alpha\in\R$ and suppose that there exists an $\omega_\alpha\in\{0,1\}^\N$ such that $f(\omega_\alpha)\equiv\alpha\mod 1$.  By \eqref{3.ImportantForTheRemark}, there exists $k_{1/8}\in\N$ such that for any $k>k_{1/8}$ and any $\omega\in\{0,1\}^\N$ with $f(\omega)\equiv\alpha\mod 1$,
\begin{multline*}\label{3.EqualAfterSomePoint}
\|(1-\frac{\xi_1}{2})\frac{\omega(k)-\omega_\alpha(k)}{2}\|=\|b_1^{(\vec \xi)}\frac{\omega(k)-\omega_\alpha(k)}{2}\|=\|\sum_{\ell=1}^da_{1,\ell}x^{(\vec \xi)}_\ell\frac{\omega(k)-\omega_\alpha(k)}{2}\|\\
\leq \|\sum_{\ell=1}^da_{1,\ell}x^{(\vec \xi)}_\ell\frac{\omega(k)-\omega_\alpha(k)}{2}-v_1(n_k)(f(\omega)-f(\omega_\alpha))\|+\|v_1(n_k)(f(\omega)-f(\omega_\alpha))\| \\
\leq \|\sum_{\ell=1}^da_{1,\ell}x^{(\vec \xi)}_\ell\frac{\omega(k)}{2}-v_1(n_k)f(\omega)\|+\|\sum_{\ell=1}^da_{1,\ell}x^{(\vec \xi)}_\ell\frac{\omega_\alpha(k)}{2}-v_1(n_k)f(\omega_\alpha)\|+\|v_1(n_k)(f(\omega)-f(\omega_\alpha))\|\\
<\frac{1}{8}+\frac{1}{8}+ \|v_1(n_k)(f(\omega)-f(\omega_\alpha))\|=\frac{1}{4}+0=\frac{1}{4},
\end{multline*}
Note that if $\omega(k)\neq \omega_\alpha(k)$, then $|(1-\frac{\xi_1}{2})\frac{\omega(k)-\omega_\alpha(k)}{2}|\in\{\frac{1}{4},\frac{1}{2}\}$ . Since for any $k>k_{1/8}$, $$|(1-\frac{\xi_1}{2})\frac{\omega(k)-\omega_\alpha(k)}{2}|=\|(1-\frac{\xi_1}{2})\frac{\omega(k)-\omega_\alpha(k)}{2}\|<\frac{1}{4},$$
we have $|(1-\frac{\xi_1}{2})\frac{\omega(k)-\omega_\alpha(k)}{2}|\not\in\{\frac{1}{4},\frac{1}{2}\}$
and hence $\omega(k)=\omega_\alpha(k)$.
It follows that $f^{-1}(\{\alpha+n\,|\,n\in\Z\})$ is a subset of
$$\{\omega\in\{0,1\}^\N\,|\,\forall k>k_{1/8},\,\omega(k)=\omega_\alpha(k)\},$$
which has at most $2^{k_{1/8}}$ elements.\\

Let $g:\{0,1\}^\N\rightarrow\mathbb T$ be defined by 
$$g(\omega)=2f(\omega)\,\text{mod}\,1=\sum_{t=1}^\infty\sum_{s=1}^d\frac{x^{(\vec \xi)}_s}{n_t^s}\omega(t)\mod 1.$$
and set 
$$\rho=\mathbb P\circ g^{-1}.$$
Take $\alpha\in [0,1)$ and let $x=\frac{\alpha}{2}$. Regarding $\alpha$ as an element of $\mathbb T=\R/\Z$, we have 
$$g^{-1}(\{\alpha\})=f^{-1}(\{x+n\,|\,n\in\Z\})\cup f^{-1}(\{x+\frac{1}{2}+n\,|\,n\in\Z\}).$$
It follows that $g^{-1}(\{\alpha\})$ is finite and hence
$$\rho(\{\alpha\})=\mathbb P(g^{-1}(\{\alpha\}))=0.$$
Noting that $f_{\vec \xi}(\omega_1,\omega_2)=g(\omega_1)-g(\omega_2)$, we have
\begin{multline*}
\sigma_{\vec \xi}(\{\alpha\})=\int_\mathbb T \mathbbm 1_{\{\alpha\}}(x)\text{d}\sigma_{\vec \xi}(x)=\int_{\{0,1\}^\N}\int_{\{0,1\}^\N}\mathbbm 1_{\{\alpha\}}(f_{\vec \xi}(\omega_1,\omega_2))\text{d}\mathbb P(\omega_1)\text{d}\mathbb P(\omega_2)\\
=\int_{\{0,1\}^\N}\int_{\{0,1\}^\N}\mathbbm 1_{\{\alpha\}}(g(\omega_1)-g(\omega_2))\text{d}\mathbb P(\omega_1)\text{d}\mathbb P(\omega_2)=\int_\mathbb T\int_\mathbb T \mathbbm 1_{\{\alpha\}}(x-y)\text{d}\rho(x)\text{d}\rho(y)=0.
\end{multline*}
So, $\sigma_{\vec \xi}$ is continuous.\\

\underline{(ii)}: For each $m\in\Z$,
\begin{equation}\label{3.FromSigmaTorho}
\int_\mathbb T e^{2\pi imx}\text{d}\sigma_{\vec \xi}(x)=\int_\mathbb T \int_\mathbb T e^{2\pi i m(x-y)}\text{d}\rho(x)\text{d}\rho(y)=|\int_\mathbb T e^{2\pi im x}\text{d}\rho(x)|^2.
\end{equation}
Thus the sequence $(a_k^{(\vec\xi)})_{k\in\Z}$ is real-valued.\\

\underline{(iii)}: Finally, we show that $\sigma_{\vec \xi}$ satisfies \eqref{3.TargetProperties}.
By \eqref{3.ImportantForTheRemark} and the definitions of $g$, $D$, $\vec x_{\vec \xi}$, and $\vec b_{\vec \xi}$, each $j\in\{1,...,N\}$ and each $m\in\Z$ satisfies
\begin{multline}\label{3.FirstMainEquality}
\lim_{k\rightarrow\infty}\int_\mathbb T e^{2\pi i (v_j(n_k)+m)x}\text{d}\rho(x)=\lim_{k\rightarrow\infty}\int_{\{0,1\}^\N} e^{2\pi i (v_j(n_k)+m)g(\omega)}\text{d}\mathbb P(\omega)\\=\lim_{k\rightarrow\infty}\int_{\{0,1\}^\N} e^{2\pi i (v_j(n_k)+m)2f(\omega)}\text{d}\mathbb P(\omega)
=\lim_{k\rightarrow\infty}\int_{\{0,1\}^\N} e^{2\pi i [2v_j(n_k)f(\omega)]}e^{2\pi i[2m f(\omega)]}\text{d}\mathbb P(\omega)\\
=\lim_{k\rightarrow\infty}\int_{\{0,1\}^\N} e^{2\pi i (2(1-\frac{\xi_j}{2})\frac{\omega(k)}{2})}e^{2\pi i[2mf(\omega)]}\text{d}\mathbb P(\omega)
=\lim_{k\rightarrow\infty}\int_{\{0,1\}^\N} e^{2\pi i(1-\frac{\xi_j}{2})\omega(k)}e^{2\pi i[2mf(\omega)]}\text{d}\mathbb P(\omega),
\end{multline}
whenever any (and hence each) of the limits in \eqref{3.FirstMainEquality} exists.\\
Since the shift map on $\{0,1\}^\N$  is $\mathbb P$-mixing, the last expression in \eqref{3.FirstMainEquality} can be rewritten as
\begin{equation}\label{3.KeyExpression}
\int_{\{0,1\}^\N} e^{2\pi i(1-\frac{\xi_j}{2})\omega(1)}\text{d}\mathbb P(\omega)\int_{\{0,1\}^\N} e^{2\pi i[2mf(\omega)]}\text{d}\mathbb P(\omega).
\end{equation}
Since $\omega(1)$ equals each of $1$ and $0$  with probability $\frac{1}{2}$, we get that \eqref{3.KeyExpression} equals
    $$\sum_{r=0}^1\frac{e^{2\pi i\frac{\xi_jr}{2}}}{2}\int_\mathbb T e^{2\pi imx}\text{d}\rho(x)=\begin{cases}
    0,\text{ if }\xi_j=1,\\
    \int_\mathbb T e^{2\pi imx}\text{d}\rho,\text{ if }\xi_j=0,
    \end{cases}.$$
So, by \eqref{3.FromSigmaTorho}, 
\begin{multline*}
    \lim_{k\rightarrow\infty}\int_\mathbb T e^{2\pi i (v_j(n_k)+m)x}\text{d}\sigma_{\vec \xi}(x)=\lim_{k\rightarrow\infty}|\int_\mathbb T e^{2\pi i (v_j(n_k)+m)x}\text{d}\rho(x)|^2\\
    =|(1-\xi_j)\int_\mathbb T e^{2\pi imx}\text{d}\rho(x)|^2=(1-\xi_j)|\int_\mathbb T e^{2\pi imx}\text{d}\rho(x)|^2=(1-\xi_j)\int_\mathbb T e^{2\pi imx}\text{d}\sigma_{\vec \xi}(x),
\end{multline*}
proving that \eqref{3.TargetProperties} holds. 
\end{proof}
\section{The proof of \cref{0.MainResultExistence}}\label{Section4ArbitraryZPolynomials}
In this section we prove  \cref{0.MainResultExistence} (=\cref{4.MainResultExistence} below) on its full generality. First, we need a technical lemma.\\
 
Given any  sequences $\phi_1,...,\phi_N:\N\rightarrow\Z$, we say that the sequences $\phi_1,...,\phi_N$ are \textbf{strongly asymptotically independent} if for any $\vec a=(a_1,...,a_N)\in \Z^N\setminus\{\vec 0\}$, the sequence 
$$a_1\phi_1(k)+\cdots+a_N\phi_N(k),\,k\in\N$$
is eventually a strictly monotone sequence (so, in particular, $\lim_{k\rightarrow\infty}|\sum_{s=1}^Na_s\phi_s(k)|=\infty$).
\begin{lem}\label{4.WeylsGeneralization}[Cf. Theorem 21 in \cite{weyl1916Mod1}]
Let $\phi_1,...,\phi_N:\N\rightarrow\Z$ be strongly asymptotically independent sequences. For any $t\in\N$, the set
\begin{multline*}
\mathfrak M_t(\phi_1,...,\phi_N)=\\
\{(\alpha_1,...,\alpha_t)\in \R^t\,|\,(\phi_1(k)\alpha_1,...,\phi_N(k)\alpha_1,...,\phi_1(k)\alpha_t,...,\phi_N(k)\alpha_t)_{k\in\N}\,\text{ is u.d. mod}\,1\}
\end{multline*}
has full Lebesgue measure on $\R^t$. Furthermore, for any $(\alpha_1,...,\alpha_t)\in \mathfrak M_t(\phi_1,...,\phi_N)$, the set 
\begin{equation}\label{4.KeyWeylsTypeResult} 
\mathfrak M(\phi_1,...,\phi_N,\alpha_1,...,\alpha_t)=\{\alpha\in\R\,|\,(\alpha_1,...,\alpha_t,\alpha)\in \mathfrak M_{t+1}(\phi_1,...,\phi_N)\}
\end{equation}
has full measure on $\R$.
\end{lem}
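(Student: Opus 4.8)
The plan is to reduce the statement to a single application of the Weyl criterion for uniform distribution, exploiting the strong asymptotic independence hypothesis. Recall that a sequence $(\vec y(k))_{k\in\N}$ in $\mathbb T^r$ (here $r = Nt$) is u.d. mod $1$ if and only if for every nonzero integer vector $\vec c$ the exponential sums $\frac{1}{K}\sum_{k=1}^K e^{2\pi i \langle \vec c, \vec y(k)\rangle}$ tend to $0$. For the set $\mathfrak M_t(\phi_1,\dots,\phi_N)$, a vector $(\alpha_1,\dots,\alpha_t)$ lies in it precisely when, for every choice of integers $(c_{j,i})_{1\le j\le N,\,1\le i\le t}$ not all zero,
$$\frac{1}{K}\sum_{k=1}^K \exp\!\Big(2\pi i \sum_{i=1}^t \alpha_i \big(\textstyle\sum_{j=1}^N c_{j,i}\phi_j(k)\big)\Big)\longrightarrow 0 .$$
First I would fix such a nonzero integer array $(c_{j,i})$ and let $I\subseteq\{1,\dots,t\}$ be the set of indices $i$ for which the row $(c_{1,i},\dots,c_{N,i})$ is nonzero; note $I\neq\emptyset$. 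For each $i\in I$, strong asymptotic independence guarantees that $\psi_i(k):=\sum_{j=1}^N c_{j,i}\phi_j(k)$ is eventually strictly monotone in $k$, hence $|\psi_i(k)|\to\infty$. The inner exponential is then $\exp(2\pi i \sum_{i\in I}\alpha_i\psi_i(k))$, a linear combination with integer coefficients $\psi_i(k)$ of the reals $\alpha_i$, $i\in I$.

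Next I would invoke the classical one-variable fact (the $N=1$, $t=|I|$ case, essentially Weyl's theorem on sequences of distinct integers, cf. Theorem 21 in \cite{weyl1916Mod1}): if $\psi_{i_1}(k),\dots,\psi_{i_r}(k)$ are integer sequences such that for every nonzero $(a_1,\dots,a_r)\in\Z^r$ one has $|a_1\psi_{i_1}(k)+\cdots+a_r\psi_{i_r}(k)|\to\infty$ (which here follows again from strong asymptotic independence applied to the combined integer combination $\sum_s a_s\psi_{i_s}$ of the $\phi_j$'s, using that a $\Z$-linear combination of $\Z$-linear combinations is a $\Z$-linear combination), then for Lebesgue-a.e. $(\alpha_{i_1},\dots,\alpha_{i_r})\in\R^r$ the exponential sums above tend to $0$. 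Thus for each fixed nonzero array $(c_{j,i})$, the set of bad $(\alpha_1,\dots,\alpha_t)\in\R^t$ has measure zero. Since there are only countably many integer arrays $(c_{j,i})$, the union of these null sets is still null, and its complement is exactly $\mathfrak M_t(\phi_1,\dots,\phi_N)$; this proves the first assertion by the Weyl criterion. For the ``furthermore'' part I would apply Fubini's theorem: the set $\mathfrak M_{t+1}(\phi_1,\dots,\phi_N)\subseteq\R^{t+1}$ has full $(t{+}1)$-dimensional measure by the first part, so its complement is null, hence for a.e. $(\alpha_1,\dots,\alpha_t)$ the slice $\mathfrak M(\phi_1,\dots,\phi_N,\alpha_1,\dots,\alpha_t)$ has full measure in $\R$; but the hypothesis of the ``furthermore'' clause fixes $(\alpha_1,\dots,\alpha_t)\in\mathfrak M_t(\phi_1,\dots,\phi_N)$, and for such a point one checks directly from the Weyl criterion that the conditional sums in the extra variable $\alpha$ still vanish for a.e. $\alpha$ — the array $(c_{j,i})$ restricted to the first $t$ coordinates, if nonzero, already forces decay uniformly in $\alpha$; if it is zero there, only the new coordinate survives and one is back to the one-variable statement.

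The main obstacle I expect is bookkeeping rather than depth: correctly organizing the reduction so that "strong asymptotic independence of $\phi_1,\dots,\phi_N$" is applied to the right integer combinations — one must verify that any nonzero $\Z$-linear combination of the auxiliary sequences $\psi_i(k)=\sum_j c_{j,i}\phi_j(k)$ is again a nonzero $\Z$-linear combination of the $\phi_j(k)$ (nonzero because the $\psi_i$ indexed by $I$ use linearly independent rows, so no cancellation of the $c$-array can occur), and hence is eventually strictly monotone with absolute value tending to infinity. Once that is set up, the measure-theoretic conclusion follows from the countable union of null sets plus Fubini, with no further analytic input beyond the classical one-dimensional Weyl equidistribution theorem.
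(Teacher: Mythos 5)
Your argument for the first assertion is essentially the paper's base-case argument (the $L^2$/Borel--Cantelli estimate for the exponential sums, carried out over $\R^t$ instead of $\R$), and it can be made to work; one caveat is that your verification of the hypothesis of the invoked ``classical fact'' is wrong as stated: the nonzero rows $(c_{1,i},\dots,c_{N,i})$, $i\in I$, of an arbitrary nonzero integer array need not be linearly independent (two rows may coincide), so it is not true that every nonzero $\Z$-combination of the $\psi_i$ is a nonzero combination of the $\phi_j$. This is harmless only because the single character you actually need (coefficient vector all ones on the $\psi_i$, $i\in I$) requires merely that for large $k\neq l$ the vectors $(\psi_i(k))_{i\in I}$ and $(\psi_i(l))_{i\in I}$ differ, which follows from the eventual strict monotonicity of any one $\psi_i$ with $i\in I$; so this part is fixable by running the $L^2$ computation directly rather than quoting a multi-dimensional equidistribution theorem.

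The genuine gap is in the ``furthermore'' clause, precisely in the mixed case where the array is nonzero both on the first $t$ coordinates and on the new one. You claim that ``the array restricted to the first $t$ coordinates, if nonzero, already forces decay uniformly in $\alpha$.'' This is false: membership of $(\alpha_1,\dots,\alpha_t)$ in $\mathfrak M_t$ gives $\frac{1}{K}\sum_{k=1}^K e^{2\pi i\theta_k}\to 0$ with $\theta_k=\sum_{i\le t}\alpha_i\psi_i(k)$, but multiplying the summands by $e^{2\pi i\alpha\psi''(k)}$, $\psi''(k)=\sum_j c_{j,t+1}\phi_j(k)$, can destroy the cancellation. For instance, with $N=t=1$, $\phi_1(k)=k$, $\alpha_1=\sqrt 2\in\mathfrak M_1$, and $c'=c''=1$, the sum $\frac{1}{K}\sum_{k=1}^K e^{2\pi ik(\sqrt 2+\alpha)}$ equals $1$ for every $\alpha\in\Z-\sqrt 2$, so there is no uniform decay, and nothing in your argument yields decay for a.e.\ $\alpha$ in this case. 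Note also that plain Fubini cannot rescue you, since the lemma (and its use in the proof of \cref{0.MainResultExistence}, where the $\alpha$'s are chosen one at a time inside prescribed tiny intervals) requires the conclusion for \emph{every} $(\alpha_1,\dots,\alpha_t)\in\mathfrak M_t$, not almost every. This mixed case is exactly where the paper works: it partitions $\mathbb T^{Nt}$ into cubes $\mathcal Q_{R,\vec r}$, passes to the simultaneous subsequences of $\phi_1,\dots,\phi_N$ along which $(\phi_j(n)\alpha_1,\dots,\phi_j(n)\alpha_t)$ lies in a fixed cube (these remain strongly asymptotically independent, and membership in $\mathfrak M_t$ gives each subsequence the correct density $R^{-Nt}$), applies the one-variable a.e.\ statement to each such subsequence, and reassembles. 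Alternatively, your own $L^2$ scheme can be repaired: integrating over the new variable $\alpha$ alone, the fixed phases $e^{2\pi i\theta_k}$ have modulus one and drop out of the norm computation, leaving a bound of order $M^{-2}\,\#\{k,l\le M:\psi''(k)=\psi''(l)\}=O(1/M)$, which yields a.e.\ decay in $\alpha$; but some such argument must be supplied, because the step as you wrote it does not hold.
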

\begin{proof}
To prove the first claim  we will use induction on $t\in\N$. When $t=1$, the proof is the same as that of Theorem 4.1 in \cite{kuipers2012uniform}.  By Weyl's criterion for uniform distribution mod 1,  it suffices to show that for any $(a_1,...,a_N)\in\Z^N\setminus\{\vec 0\}$ the set,
$$\{\alpha\in [0,1)\,|\,\lim_{M\rightarrow\infty}\frac{1}{M}\sum_{r=1}^M \exp[2\pi i\sum_{j=1}^N a_j\phi_j(r)\alpha]=0\}$$
has Lebesgue measure 1.\\
For each $M\in\N$ and each $\alpha \in [0,1)$ define  
$$S(M)(\alpha)=\frac{1}{M}\sum_{r=1}^M \exp[2\pi i\sum_{j=1}^N a_j\phi_j(r) \alpha].$$
Observe that 
\begin{equation}\label{4.sqrNorm}
||S(M)||_{L^2(\mathbb T)}^2=
\frac{1}{M^2}\sum_{r,s=1}^M\int_{\mathbb T}\exp[2\pi i\sum_{j=1}^N a_j(\phi_j(r)-\phi_j(s)) x]\text{d}x.
\end{equation}
The right hand side on \eqref{4.sqrNorm} can be written as
\begin{equation}\label{4.sqrtNorm2}
\frac{1}{M}+\frac{1}{M^2}\sum_{s>r=1}^M2\text{Re}\left(\int_{\mathbb T}\exp[2\pi i\sum_{j=1}^N a_j(\phi_j(s)-\phi_j(r))x]\text{d}x\right).
\end{equation}
So, since $\phi_1,...,\phi_N$ are strongly asymptotically independent, it follows from  \eqref{4.sqrtNorm2} that for  $M\in\N$ large enough
$$||S(M)||_{L^2(\mathbb T)}^2<\frac{2}{M}.$$
It follows that 
$$\int_{\mathbb T} \sum_{M=1}^\infty |S(M^2)(x)|^2\text{d}x=\sum_{M=1}^\infty||S(M^2)||_{L^2(\mathbb T)}^2<\infty$$
and hence, for almost every $\alpha\in \mathbb T$, $\sum_{M=1}^\infty |S(M^2)(\alpha)|^2 <\infty$.\\
So, in particular,  for almost every $\alpha\in \mathbb T$,
\begin{equation}\label{4.LimitAlongSquares}
    \lim_{M\rightarrow \infty} S(M^2)(\alpha)=0.
\end{equation}

We will now show that \eqref{4.LimitAlongSquares} implies that for almost every $\alpha\in \mathbb T$, $$\lim_{M\rightarrow\infty} S(M)(\alpha)=0.$$
Indeed, let $\alpha\in\mathbb T$ be such that $\lim_{M\rightarrow \infty} S(M^2)(\alpha)=0$ and let  $M,M_0\in\N$ satisfy $M_0^2\leq M<(M_0+1)^2$. Since  
$$|S(M)(\alpha)-S(M_0^2)(\alpha)|
\leq \frac{1}{M_0^2}\sum_{n=1}^{M_0^2}\left(1-\frac{M_0^2}{M}\right)+\frac{1}{M}\sum_{n=M_0^2+1}^{M}1,$$
we have that
$$|S(M)(\alpha)-S(M_0^2)(\alpha)|\leq 1-\frac{M_0^2}{M}+\frac{2M_0+1}{M}\leq 1-\frac{M_0^2}{(M_0+1)^2}+\frac{2M_0+1}{M_0^2}.$$
Thus, 
$$\lim_{M\rightarrow\infty} S(M)(\alpha)=0,$$
proving that $\mathfrak M_1(\phi_1,...,\phi_N)$ has full Lebesgue measure in $\R$.\\

Now let $t\in\N$ and suppose that for any $t'\leq t$ and any strongly asymptotically independent $g_1,...,g_N:\N\rightarrow\Z$, $\mathfrak M_{t'}(g_1,...,g_N)$ has full measure in $\R^{t'}$. We want to show that $\mathfrak M_{t+1}(\phi_1,...,\phi_N)$ has full measure in $\R^{t+1}$.\\
For each $R\in \N$ and each $\vec r=(r_{1,1},...,r_{N,1},...,r_{1,t},...,r_{N,t})\in\{0,...,R-1\}^{Nt}$ we define the set 
$$\mathcal Q_{R,\vec r}=[\frac{r_{1,1}}{R},\frac{r_{1,1}+1}{R})\times\cdots\times[\frac{r_{N,1}}{R},\frac{r_{N,1}+1}{R})\times\cdots\times[\frac{r_{1,t}}{R},\frac{r_{1,t}+1}{R})\times\cdots\times[\frac{r_{N,t}}{R},\frac{r_{N,t}+1}{R}).$$ 
Observe that for each $R\in\N$, $\{\mathcal Q_{R,\vec r}\,|\,\vec r\in\{0,...,R-1\}^{Nt}\}$ is a partition of $\mathbb T^{Nt}=[0,1)^{Nt}$.\\
Fix $(\alpha_1,...,\alpha_t)\in\mathfrak M_{t}(\phi_1,...,\phi_N)$. For each $R\in\N$ and each $\vec r\in\{0,...,R-1\}^{Nt}$, let $(n_k^{(R,\vec r)})_{k\in\N}$ be the unique increasing sequence satisfying
$$\{n^{(R,\vec r)}_k\,|\,k\in\N\}=\{n\in\N\,|\,(\phi_1(n)\alpha_1,...,\phi_N(n)\alpha_1,...,\phi_1(n)\alpha_t,...,\phi_N(n)\alpha_t)\text{ mod }1\in\mathcal Q_{R,\vec r}\}.$$
For each $j\in\{1,...,N\}$, let $\phi_j^{(R,\vec r)}:\N\rightarrow\Z$ be defined by
$$\phi_j^{(R,\vec r)}(k)=\phi_j(n^{(R,\vec r)}_k).$$
(Observe that since $\phi_1^{(R,\vec r)}$,...,$\phi_N^{(R,\vec r)}$ are "simultaneous" subsequences of $\phi_1,...,\phi_N$, the sequences $\phi_1^{(R,\vec r)}$,...,$\phi_N^{(R,\vec r)}$ are strongly asymptotically independent.)\\

Let 
$$\mathfrak M'(\phi_1,...,\phi_N,\alpha_1,...,\alpha_t)=\bigcap_{R\in\N}\bigcap_{\vec r\in\{0,...,R-1\}^{Nt}} \mathfrak M_1(\phi_1^{(R,\vec r)},...,\phi_N^{(R,\vec r)}).$$  
Note that, by the inductive hypothesis, $\mathfrak M'(\phi_1,...,\phi_N,\alpha_1,...,\alpha_t)$ has full measure in $\R$. Pick $\alpha \in \mathfrak M'(\phi_1,...,\phi_N,\alpha_1,...,\alpha_t)$. For any $(a_{1,1},...,a_{N,1},....,a_{1,t},...,a_{N,t})\in \Z^{Nt}$ and any $(a_1,...,a_N)\in \Z^N\setminus\{\vec 0\}$,
\begin{multline}\label{4.WeylTechnicalLimit1}
    \lim_{M\rightarrow\infty}\frac{1}{M}\sum_{n=1}^M\exp\left[2\pi i(\sum_{j=1}^N\sum_{s=1}^ta_{j,s}\phi_j(n)\alpha_s+\sum_{j=1}^Na_j\phi_j(n)\alpha)\right]\\
    =\lim_{R\rightarrow\infty}\lim_{M\rightarrow\infty}\frac{1}{M}\sum_{n=1}^{M}\exp\left[2\pi i(\sum_{j=1}^N\sum_{s=1}^ta_{j,s}\phi_j(n)\alpha_s+\sum_{j=1}^Na_j\phi_j(n)\alpha)\right]\\
   =\lim_{R\rightarrow\infty}\lim_{M\rightarrow\infty}\frac{1}{M}\sum_{n=1}^{M}\sum_{\vec r\in\{0,...,R-1\}^{Nt}}\mathbbm 1_{\{n_k^{(R,\vec r)}\,|\,k\in\N\}}(n)\exp\left[2\pi i(\sum_{j=1}^N\sum_{s=1}^ta_{j,s}\phi_j(n)\alpha_s+\sum_{j=1}^Na_j\phi_j(n)\alpha)\right]\\
    =\lim_{R\rightarrow\infty}\sum_{\vec r\in\{0,...,R-1\}^{Nt}}\left(\lim_{M\rightarrow\infty}\frac{1}{M}\sum_{n=1}^{M}\mathbbm 1_{\{n_k^{(R,\vec r)}\,|\,k\in\N\}}(n)\exp\left[2\pi i(\sum_{j=1}^N\sum_{s=1}^ta_{j,s}\phi_j(n)\alpha_s+\sum_{j=1}^Na_j\phi_j(n)\alpha)\right]\right)
    \end{multline}
Fix $R\in\N$ and  $\vec r\in\{0,...,R-1\}^{Nt}$. By our choice of  $(\alpha_1,...,\alpha_t)$ and the definition of $(n_k^{(R,\vec r)})_{k\in\N}$,
$$\lim_{M\rightarrow\infty}\frac{|\{n_k^{(R,\vec r)}\,|\,n_k^{(R,\vec r)}\leq M\}|}{M}=\frac{1}{R^{Nt}}.$$
So,
\begin{multline*}
\lim_{M\rightarrow\infty}\frac{1}{M}\sum_{n=1}^{M}\mathbbm 1_{\{n_k^{(R,\vec r)}\,|\,k\in\N\}}(n)\exp\left[2\pi i(\sum_{j=1}^N\sum_{s=1}^ta_{j,s}\phi_j(n)\alpha_s+\sum_{j=1}^Na_j\phi_j(n)\alpha)\right]\\
    =\lim_{M\rightarrow\infty}\frac{|\{n_k^{(R,\vec r)}\,|\,n_k^{(R,\vec r)}\leq M\}|}{M|\{n_k^{(R,\vec r)}\,|\,n_k^{(R,\vec r)}\leq M\}|}
    \sum_{n=1}^{|\{n_k^{(R,\vec r)}\,|\,n_k^{(R,\vec r)}\leq M\}|}\exp\left[2\pi i(\sum_{j=1}^N\sum_{s=1}^ta_{j,s}\phi^{(R,\vec r)}_j(n)\alpha_s+\sum_{j=1}^Na_j\phi^{(R,\vec r)}_j(n)\alpha)\right]\\
    =\lim_{M\rightarrow\infty}\frac{1}{R^{Nt}|\{n_k^{(R,\vec r)}\,|\,n_k^{(R,\vec r)}\leq M\}|}
    \sum_{n=1}^{|\{n_k^{(R,\vec r)}\,|\,n_k^{(R,\vec r)}\leq M\}|}\exp\left[2\pi i(\sum_{j=1}^N\sum_{s=1}^ta_{j,s}\phi^{(R,\vec r)}_j(n)\alpha_s+\sum_{j=1}^Na_j\phi^{(R,\vec r)}_j(n)\alpha)\right]\\
    = \frac{1}{R^{Nt}}\lim_{M\rightarrow\infty}\left(\frac{1}{M}
    \sum_{n=1}^{M}\exp\left[2\pi i(\sum_{j=1}^N\sum_{s=1}^ta_{j,s}\phi^{(R,\vec r)}_j(n)\alpha_s+\sum_{j=1}^Na_j\phi^{(R,\vec r)}_j(n)\alpha)\right]\right).
\end{multline*}
Observe that for  any $\epsilon>0$ there exists an $R_0\in\N$ such that for any $R\geq R_0$, any  
$\vec r\in\{0,...,R-1\}^{Nt},$
and any $n\in\N$,
$$\left|\exp[2\pi i\sum_{j=1}^N\sum_{s=1}^ta_{j,s}\phi^{(R,\vec r)}_j(n)\alpha_s]-\exp[2\pi i\sum_{j=1}^N\sum_{s=1}^ta_{j,s}\frac{r_{j,s}}{R}]\right|<\epsilon.$$
It follows from \eqref{4.WeylTechnicalLimit1} that
\begin{multline*}
    \lim_{M\rightarrow\infty}\frac{1}{M}\sum_{n=1}^M\exp\left[2\pi i(\sum_{j=1}^N\sum_{s=1}^ta_{j,s}\phi_j(n)\alpha_s+\sum_{j=1}^Na_j\phi_j(n)\alpha)\right]\\
    =\lim_{R\rightarrow\infty}\frac{1}{R^{Nt}}\sum_{\vec r\in\{0,...,R-1\}^{Nt}}\left(\lim_{M\rightarrow\infty}\frac{1}{M}
    \sum_{n=1}^{M}\exp\left[2\pi i(\sum_{j=1}^N\sum_{s=1}^ta_{j,s}\phi^{(R,\vec r)}_j(n)\alpha_s+\sum_{j=1}^Na_j\phi^{(R,\vec r)}_j(n)\alpha)\right]\right)\\
    =\lim_{R\rightarrow\infty}\frac{1}{R^{Nt}}\sum_{\vec r\in\{0,...,R-1\}^{Nt}}\left(\lim_{M\rightarrow\infty}\frac{\exp[2\pi i\sum_{j=1}^N\sum_{s=1}^ta_{j,s}\frac{r_{j,s}}{R}]}{M}
    \sum_{n=1}^{M}\exp[2\pi i\sum_{j=1}^Na_j\phi^{(R,\vec r)}_j(n)\alpha]\right)=0.
\end{multline*}
So $(\alpha_1,...,\alpha_t,\alpha)\in\mathfrak M_{t+1}(\phi_1,...,\phi_N)$.\\
Since $\mathfrak M_t(\phi_1,...,\phi_N)$ has full measure and for any $(\alpha_1,...,\alpha_t)\in \mathfrak M_t(\phi_1,...,\phi_N)$, $\mathfrak M'(\phi_1,...,\phi_N,\alpha_1,...,\alpha_t)$ also has full measure, Fubini's theorem implies that  $\mathfrak M_{t+1}(\phi_1,...,\phi_N)$ has full measure. This completes the induction.\\

To see that for any $(\alpha_1,...,\alpha_t)\in \mathfrak M_t(\phi_1,...,\phi_N)$, $\mathfrak M(\phi_1,...,\phi_N,\alpha_1,...,\alpha_t)$ has full measure, simply note that 
$$\mathfrak M'(\phi_1,...,\phi_N,\alpha_1,...,\alpha_t)\subseteq \mathfrak M(\phi_1,...,\phi_N,\alpha_1,...,\alpha_t).$$
\end{proof}

\begin{thm}\label{4.MainResultExistence}
Let $N\in\N$ and let $\phi_1,...,\phi_N:\N\rightarrow\Z$ be asymptotically independent sequences. Then there exists an increasing sequence $(n_k)_{k\in\N}$ in $\N$ such that for any $\vec \xi=(\xi_1,...,\xi_N)\in\{0,1\}^N$,  there exists a non-trivial weakly mixing Gaussian system  $(\R^\Z,\mathcal A,\gamma_{\vec \xi},T_{\vec \xi})$ with the property that for each $j\in\{1,...,N\}$ and any $A,B\in\mathcal A$, 
\begin{equation}\label{4.MixingOrRigidExpression}
\lim_{k\rightarrow\infty}\gamma_{\vec\xi}(A\cap T_{\vec \xi}^{-\phi_j(n_k)}B)=(1-\xi_j)\gamma_{\vec \xi}(A\cap B)+\xi_j\gamma_{\vec \xi}(A)\gamma_{\vec \xi}(B).
\end{equation}
\end{thm}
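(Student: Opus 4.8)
The plan is to argue entirely on the level of spectral measures. By \cref{1.NotionsOfMixing} and \cref{1.TheWienerConsequence} (together with the construction of Gaussian systems recalled in Section \ref{Section2GaussianBackground}), it suffices to produce one increasing sequence $(n_k)_{k\in\N}$ and, for each $\vec\xi=(\xi_1,\dots,\xi_N)\in\{0,1\}^N$, a \emph{continuous} probability measure $\sigma_{\vec\xi}$ on $\mathbb T=[0,1)$ whose Fourier coefficients are real (so that $\sigma_{\vec\xi}$ is symmetric and non-zero) and satisfy $\lim_{k\to\infty}\int_{\mathbb T}e^{2\pi i(\phi_j(n_k)+m)x}\mathrm d\sigma_{\vec\xi}(x)=(1-\xi_j)\int_{\mathbb T}e^{2\pi imx}\mathrm d\sigma_{\vec\xi}(x)$ for all $j\in\{1,\dots,N\}$ and all $m\in\Z$; the Gaussian system attached to $\sigma_{\vec\xi}$ is then non-trivial, weakly mixing, and has the required behaviour. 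Before starting I would replace $\phi_1,\dots,\phi_N$ by a common subsequence along which they are \emph{strongly} asymptotically independent: enumerating $\Z^N\setminus\{\vec 0\}$ and thinning once for each $\vec a$ (each $|\sum_j a_j\phi_j|\to\infty$, hence is strictly monotone along an appropriate further subsequence), a diagonal argument yields such a subsequence, and since we are free to choose $(n_k)$ inside it there is no loss in assuming $\phi_1,\dots,\phi_N$ are strongly asymptotically independent, so that \cref{4.WeylsGeneralization} applies.

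Next I would reduce the $2^N$ measures to just $N+1$ by convolution. Suppose we can build, along one common sequence $(n_k)$: a continuous probability measure $\mu_0$ with real Fourier coefficients that is rigid along $(\phi_j(n_k))_k$ for every $j$, i.e. $\int e^{2\pi i(\phi_j(n_k)+m)x}\mathrm d\mu_0(x)\to\int e^{2\pi imx}\mathrm d\mu_0(x)$; and, for each $j_0\in\{1,\dots,N\}$, a probability measure $\nu_{j_0}$ with real Fourier coefficients such that $\int e^{2\pi i(\phi_{j_0}(n_k)+m)x}\mathrm d\nu_{j_0}(x)\to 0$ while $\int e^{2\pi i(\phi_j(n_k)+m)x}\mathrm d\nu_{j_0}(x)\to\int e^{2\pi imx}\mathrm d\nu_{j_0}(x)$ for every $j\ne j_0$. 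Then set $\sigma_{\vec\xi}$ to be the convolution of $\mu_0$ with the measures $\nu_j$ over all $j$ with $\xi_j=1$. Its Fourier coefficients are the corresponding products, hence real, non-negative, and equal to $1$ at $0$; $\sigma_{\vec\xi}$ is continuous because $\mu_0$ is; for an index with $\xi_j=0$ every factor converges to its value at $m$, so the product converges to $\widehat{\sigma_{\vec\xi}}(m)$; and for an index with $\xi_j=1$ the factor coming from $\nu_j$ tends to $0$ while the others stay bounded by $1$, so the product tends to $0$. That is precisely the asserted limit.

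The core of the proof is the construction of $\mu_0$ and the $\nu_{j_0}$ along one common $(n_k)$, and here I would mimic the mechanism of \cref{3.IndependentPoly}, feeding it \cref{4.WeylsGeneralization} in place of the power structure $n,n^2,\dots,n^d$. For each of these $N+1$ measures $M$ take $M=(\mathbb P\times\mathbb P)\circ f_M^{-1}$, where $\mathbb P$ is the $(\tfrac12,\tfrac12)$-Bernoulli measure on $\{0,1\}^{\N}$ and $f_M(\omega_1,\omega_2)=\sum_{t\ge 1}c_t^{(M)}(\omega_1(t)-\omega_2(t))\bmod 1$ for positive reals $c_t^{(M)}$ with $\sum_t c_t^{(M)}<\infty$; then $\widehat M(n)=\prod_t\cos^2(\pi n c_t^{(M)})\ge 0$ is real, so $M$ is symmetric. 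The coefficients and the sequence $(n_k)$ are produced by one interleaved induction. Viewing the numbers $c_t^{(M)}$ over all $N+1$ choices of $M$ and all $t$ as a single sequence $(\alpha_s)_{s\ge1}$, I would use the ``furthermore'' clause of \cref{4.WeylsGeneralization} to choose the $\alpha_s$ one at a time so that every initial segment lies in the full-measure set $\mathfrak M_t(\phi_1,\dots,\phi_N)$ (for each $t$), intersecting at each step with the open interval of admissible small values and discarding the (null) rationals; along the way I would impose that within each fixed $M$ the $c_t^{(M)}$ decrease with $c_{t+1}^{(M)}<c_t^{(M)}/3$ (this makes $\omega\mapsto\sum_t c_t^{(M)}\omega(t)$ injective on $\{0,1\}^{\N}$, so $M$ is continuous), and that $c_t^{(M)}$, chosen in round $t$, is small enough that $|\phi_j(n_i)|\,c_t^{(M)}<2^{-t}$ for all $i<t$ and all $j$. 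Once the coefficients up through level $k$ are fixed, the equidistribution mod $1$ of the vector $(\phi_j(n)\alpha_s)_{j\le N,\ s\le(N+1)k}$ lets me pick $n_k>n_{k-1}$ so that: for $\mu_0$, $\|\phi_j(n_k)c_t^{(\mu_0)}\|<2^{-k}$ for all $j$ and $t\le k$; and for each $\nu_{j_0}$, $\|\phi_j(n_k)c_t^{(\nu_{j_0})}\|<2^{-k}$ for $t<k$ and all $j$, $\|\phi_j(n_k)c_k^{(\nu_{j_0})}\|<2^{-k}$ for $j\ne j_0$, and $\|\phi_{j_0}(n_k)c_k^{(\nu_{j_0})}-\tfrac12\|<2^{-k}$, where $\|\cdot\|$ denotes the distance to $\Z$. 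Splitting $\phi_j(n_k)\sum_t c_t^{(M)}(\omega_1(t)-\omega_2(t))$ into the levels $t<k$ (each summand within $2^{-k}$ of an integer), the level $t=k$, and the levels $t>k$ (absolutely $<2^{-k}$ in total by the smallness imposed on the later coefficients), one gets: $\phi_j(n_k)\sum_t c_t^{(\mu_0)}(\omega_1(t)-\omega_2(t))\to 0\bmod 1$ uniformly in $(\omega_1,\omega_2)$, so $\mu_0$ is rigid along every $(\phi_j(n_k))_k$; the same for $\nu_{j_0}$ and indices $j\ne j_0$; and $\phi_{j_0}(n_k)\sum_t c_t^{(\nu_{j_0})}(\omega_1(t)-\omega_2(t))\equiv\tfrac12(\omega_1(k)-\omega_2(k))+o(1)\bmod 1$. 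Consequently $\int e^{2\pi i(\phi_{j_0}(n_k)+m)x}\mathrm d\nu_{j_0}(x)$ equals, up to $o(1)$, $\mathbb E[(-1)^{\omega_1(k)+\omega_2(k)}e^{2\pi i m f_{\nu_{j_0}}(\omega_1,\omega_2)}]$; since $\omega_i(k)$ contributes only $\pm c_k^{(\nu_{j_0})}\to 0$ to $f_{\nu_{j_0}}$, the exponential becomes asymptotically independent of the sign factor, whose mean is $0$, so the whole expression tends to $0$ — that is the mixing of $\nu_{j_0}$ along $(\phi_{j_0}(n_k))_k$.

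The main obstacle is this last construction: one must run \cref{4.WeylsGeneralization} to select the coefficients as ``equidistribution-good'' reals so that indices $n_k$ realizing the prescribed residues mod $1$ actually exist, and at the same time keep the tails of the defining series negligible by interleaving the choices of the $c_t^{(M)}$ and of the $n_k$; the two delicate points are arranging that $\nu_{j_0}$ ``resonates'' with $\phi_{j_0}$ at the residue $\tfrac12$ while staying trivially rigid along the other sequences, and the short computation that turns this resonance into the mixing limit. Once the coefficients are in hand, the continuity of $\mu_0$ (hence of every $\sigma_{\vec\xi}$), the reality and non-negativity of the Fourier coefficients, and the convergence of the infinite products are routine consequences of the lacunarity and positivity built into the $c_t^{(M)}$, and \cref{1.NotionsOfMixing} together with \cref{1.TheWienerConsequence} then finishes the proof.
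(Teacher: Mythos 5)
Your proposal is correct, and it reaches the theorem by a route that differs from the paper in its organizing reduction, though the underlying machinery is the same. Like the paper, you first pass to a subsequence making $\phi_1,\dots,\phi_N$ strongly asymptotically independent, invoke \cref{4.WeylsGeneralization} (including its ``furthermore'' clause) to select coefficients one at a time, build measures as push-forwards of $\mathbb P\times\mathbb P$ under $(\omega_1,\omega_2)\mapsto\sum_t c_t(\omega_1(t)-\omega_2(t))\bmod 1$, and run an interleaved induction alternating between choosing coefficients (small relative to the already chosen $n_i$) and choosing $n_k$ via equidistribution so that prescribed residues mod $1$ are realized; rigidity comes from resonance near $0$ and mixing from resonance near $\tfrac12$, exactly as in the paper's scheme (where the targets are encoded by $b^{(\vec\xi)}_j=1-\tfrac{\xi_j}{2}$). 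The genuine difference is the reduction from $2^N$ measures to $N+1$ building blocks: the paper constructs, for each $\vec\xi$, a single measure whose $k$-th coefficient is tuned simultaneously against all $N$ sequences, whereas you construct one measure $\mu_0$ rigid along every $(\phi_j(n_k))_k$ and, for each $j_0$, a measure $\nu_{j_0}$ with a single non-trivial resonance (mixing along $\phi_{j_0}$, rigid along the rest), and then obtain $\sigma_{\vec\xi}$ by convolution, using multiplicativity of Fourier coefficients to dispatch all $2^N$ cases at once and the continuity of the factor $\mu_0$ to get continuity of every $\sigma_{\vec\xi}$. This buys a lighter induction (only one resonance per building block per step, and the $\{0,1\}^N$ bookkeeping becomes an algebraic triviality), at the cost of the extra convolution layer; the paper's direct construction avoids convolutions and handles all $\vec\xi$ by the same two-line continuity and decoupling arguments. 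Your mixing computation for $\nu_{j_0}$ is sound (the coordinate-$k$ factor $\tfrac12|1-e^{2\pi i m c_k^{(\nu_{j_0})}}|^2\to 0$ decouples by independence), and it could even be shortened by noting $\widehat{\nu_{j_0}}(n)=\prod_t\cos^2(\pi n c^{(\nu_{j_0})}_t)\le\cos^2\bigl(\pi(\phi_{j_0}(n_k)+m)c^{(\nu_{j_0})}_k\bigr)\to\cos^2(\pi/2)=0$.
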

\begin{proof}
As in the proof of \cref{3.IndependentPoly}, we will construct spectral measures $\sigma_{\vec \xi}$, $\vec \xi\in\{0,1\}^N$, which have associated Gaussian systems with the desired properties. For each $\vec \xi=(\xi_1,...,\xi_N) \in\{0,1\}^N$, let $\vec b_{\vec \xi}=(b_1^{(\vec \xi)},...,b_N^{(\vec \xi)})\in \Q^N$ be defined as in \eqref{3.SolutionVector}  (so $b^{(\vec \xi)}_j=1-\frac{\xi_j}{2}$ for each $j\in\{1,...,N\}$) and for each $k\in\N$ let
$$\Phi(k)=\max_{j\in\{1,...,N\}}|\phi_j(k)|+1.$$
We claim that there exist  (a) an increasing sequence  $(n_k)_{k\in\N}$ in $\N$  and (b) sequences of irrational numbers $(\alpha_k^{(\vec \xi)})_{k\in\N}$, $\vec \xi\in\{0,1\}^N$, which satisfy:
\begin{enumerate}[(1)]
    \item For each $k\in\N$ and each $\vec \xi\in\{0,1\}^N$, $\alpha_{k}^{(\vec \xi)}\in(0,\frac{1}{2^{k}\Phi(n_{k-1})}]$, where $n_0=1$. So, in particular, 
    $$\lim_{t\rightarrow\infty}\phi_\ell(n_t)\sum_{s=t+1}^\infty |\frac{\alpha_s^{(\vec \xi)}}{2}|=0$$
    for each $\ell\in\{1,...,N\}$.
    \item For each $k\in\N$, each $\vec \xi\in\{0,1\}^N$, and each $\ell\in\{1,...,N\}$,
    $$\|\phi_\ell(n_k)\frac{\alpha_k^{(\vec \xi)}}{2}-\frac{b^{(\vec \xi)}_\ell}{2}\|<\frac{1}{k},$$
    which implies
    $$\lim_{t\rightarrow\infty}\|\phi_\ell(n_t)\frac{\alpha_t^{(\vec \xi)}}{2}- \frac{b^{(\vec \xi)}_\ell}{2}\|=0.$$
    \item For each $k\in\N$, each $\vec \xi\in\{0,1\}^N$, each $\ell\in\{1,...,N\}$, and each $k_0\in \N$ with $k_0<k$,
    $$\|\phi_\ell(n_k)\frac{\alpha_{k_0}^{(\vec \xi)}}{2}\|<\frac{1}{k^2}.$$
    This means that
    $$\lim_{k\rightarrow\infty}\|\phi_\ell(n_k)\frac{\alpha_{k_0}^{(\vec \xi)}}{2}\|=0$$
    fast enough to ensure that
    $$\lim_{k\rightarrow\infty}\sum_{t=1}^k\|\phi_\ell(n_{k+1})\frac{\alpha_t^{(\vec \xi)}}{2}\|=0.$$
\end{enumerate}
Indeed, we define the sequences $(n_k)_{k\in\N}$ and $(\alpha_k^{(\vec \xi)})_{k\in\N}$, $\vec \xi\in\{0,1\}^N$, inductively on $k\in\N$. First, note that there exists an increasing  sequence  $(m_k)_{k\in\N}$ in $\N$ for which the sequences 
$$\psi_j(k)=\phi_j(m_k),\,j\in\{1,...,N\},$$
are strongly asymptotically independent. Let $\vec \xi_1,...,\vec \xi_{2^N}$ be an enumeration of $\{0,1\}^N$. In order to construct the desired sequences, we will need to show that the sequences $(\alpha_k^{(\vec \xi)})_{k\in\N}$, $\vec \xi\in\{0,1\}^N$, satisfy the following additional property:
\begin{enumerate}[(4)]
\item For any $k\in\N$, the sequence 
\begin{multline*}
\big(\phi_1(m_t)\alpha_1^{(\vec \xi_1)},....,\phi_N(m_t)\alpha_1^{(\vec \xi_1)},...,
    \phi_1(m_t)\alpha_{1}^{(\vec \xi_{2^N})},...,,\phi_N(m_t)\alpha_{1}^{(\vec \xi_{2^N})},\\
    \vdots\\
    \phi_1(m_t)\alpha_k^{(\vec \xi_1)},....,\phi_N(m_t)\alpha_k^{(\vec \xi_1)},...,
    \phi_1(m_t)\alpha_{k}^{(\vec \xi_{2^N})},...,,\phi_N(m_t)\alpha_{k}^{(\vec \xi_{2^N})}
    \big),\,t\in\N
\end{multline*}
is uniformly distributed $\mod\,1$.
\end{enumerate}
By \cref{4.WeylsGeneralization}, we can pick $$(\alpha_1^{(\vec \xi_1)},...,,\alpha_{1}^{(\vec \xi_{2^N})})\in (0,\frac{1}{2\Phi(1)}]^{2^N}$$
such that  the sequence 
    $$\big(\phi_1(m_t)\alpha_1^{(\vec \xi_1)},....,\phi_N(m_t)\alpha_1^{(\vec \xi_1)},...,
    \phi_1(m_t)\alpha_{1}^{(\vec \xi_{2^N})},...,\phi_N(m_t)\alpha_{1}^{(\vec \xi_{2^N})}\big),\,t\in\N$$
is uniformly distributed $\mod 1$ (and so, $\alpha_1^{(\vec \xi_1)},...,,\alpha_{1}^{(\vec \xi_{2^N})}$ satisfy (4)). Pick $t_1\in\N$ arbitrarily. Setting $n_1=m_{t_1}$, one can check that $n_1$ and $\alpha_{1}^{(\vec \xi_1)}$,....,$\alpha_{1}^{(\vec \xi_{2^N})}$ 
 satisfy conditions (1), (2), and (3) (note that for $k=1$, (2) is trivial and (3) is vacuous).\\
Fix now $k\in\N$ and  suppose we have chosen $\alpha_{1}^{(\vec \xi)},...,\alpha_{k}^{(\vec \xi)}$, $\vec \xi\in\{0,1\}^N$,  and $n_1<\cdots<n_k$ satisfying conditions (1)-(4). Note that $(0,\frac{1}{2^{k+1}\Phi(n_{k})}]$ has positive measure. By repeatedly applying  \eqref{4.KeyWeylsTypeResult} in  \cref{4.WeylsGeneralization}, we can find $$\alpha_{k+1}^{(\vec \xi_1)},....,\alpha_{k+1}^{(\vec \xi_{2^N})}\in (0,\frac{1}{2^{k+1}\Phi(n_{k})}]$$
such that for each $s\in\{1,...,2^N\}$ the sequence 
\begin{multline*}
\big(\phi_1(m_t)\alpha_1^{(\vec \xi_1)},....,\phi_N(m_t)\alpha_1^{(\vec \xi_1)},...,
    \phi_1(m_t)\alpha_{1}^{(\vec \xi_{2^N})},...,,\phi_N(m_t)\alpha_{1}^{(\vec \xi_{2^N})},\\
    \vdots\\
    \phi_1(m_t)\alpha_k^{(\vec \xi_1)},....,\phi_N(m_t)\alpha_k^{(\vec \xi_1)},...,
    \phi_1(m_t)\alpha_{k}^{(\vec \xi_{2^N})},...,,\phi_N(m_t)\alpha_{k}^{(\vec \xi_{2^N})},\\
    \phi_1(m_t)\alpha_{k+1}^{(\vec \xi_1)},...,\phi_N(m_t)\alpha_{k+1}^{(\vec \xi_{1})},...,\phi_1(m_t)\alpha_{k+1}^{(\vec \xi_s)},...,\phi_N(m_t)\alpha_{k+1}^{(\vec \xi_{s})}
    \big),\,t\in\N
\end{multline*}
is uniformly distributed $\mod 1$. It follows that $\alpha_1^{(\vec \xi)},...,\alpha_{k+1}^{(\vec \xi)}$, $\vec \xi\in\{0,1\}^N$, satisfy (4) and hence one can find $t_{k+1}\in\N$ for which (1)-(3) hold for $n_{k+1}=m_{t_{k+1}}$ and $n_k<n_{k+1}$, completing the induction.\\  

Fix $\vec \xi=(\xi_1,...,\xi_N)\in\{0,1\}^N$. 
By (1)-(3), for any $\epsilon>0$, there exists $k_\epsilon\in\N$ such that for  any $k>k_\epsilon$, any $\ell\in\{1,...,N\}$, and any $\omega\in\{0,1\}^\N$,
\begin{equation}\label{4.TailBound}
\|\phi_\ell(n_k)\sum_{t=k+1}^\infty\frac{\alpha^{(\vec \xi)}_t}{2}\omega(t)\|\leq| \phi_\ell(n_k)\sum_{t=k+1}^\infty\frac{\alpha^{(\vec \xi)}_t}{2}\omega(t)|\leq |\phi_\ell(n_k)|\sum_{t=k+1}^\infty|\frac{\alpha^{(\vec \xi)}_t}{2}|<\epsilon,
\end{equation}
\begin{equation}\label{4.BoundOnCentralTerm}
    \|\phi_\ell(n_k)\frac{\alpha_k^{(\vec \xi)}}{2}\omega(k)-\frac{b^{(\vec \xi)}_\ell}{2}\omega(k)\|\leq \|\phi_\ell(n_k)\frac{\alpha_k^{(\vec \xi)}}{2}-\frac{b^{(\vec \xi)}_\ell}{2}\|<\epsilon,
\end{equation} 
and
\begin{equation}\label{4.TheFirstPart}
 \|\phi_\ell(n_k)\sum_{t=1}^{k-1}\frac{\alpha_t^{(\vec \xi)}}{2}\omega(t)\|
 \leq \sum_{t=1}^{k-1}\|\phi_\ell(n_k)\frac{\alpha_t^{(\vec \xi)}}{2}\|<\epsilon. 
\end{equation}

Let $f:\{0,1\}^\N\rightarrow\R$ be defined by
$$f(\omega)=\sum_{t=1}^\infty \frac{\alpha^{(\vec \xi)}_t}{2}\omega(t).$$
Combining \eqref{4.TailBound}, \eqref{4.BoundOnCentralTerm}, and \eqref{4.TheFirstPart}, one has that for any $\epsilon>0$ there exists a $k_\epsilon\in \N$ such that for any $k>k_\epsilon$, any $\ell\in\{1,...,N\}$, and any $\omega\in\{0,1\}^\N$,
\begin{equation}\label{4.MainEstimate}
\|\phi_\ell(n_k)f(\omega)- \frac{b_\ell^{(\vec \xi)}}{2}\omega(k)\|<\epsilon.
\end{equation}
Let now $f_{\vec \xi}:\{0,1\}^\N\times\{0,1\}^\N\rightarrow \mathbb T$ be defined by
$$f_{\vec \xi}(\omega_1,\omega_2)=2(f(\omega_1)-f(\omega_2))\,\text{mod}\,1=\sum_{t=1}^\infty \alpha_t^{(\vec \xi)}(\omega_1(t)-\omega_2(t))\,\text{mod}\,1.$$
Setting $\sigma_{\vec \xi}=(\mathbb P\times \mathbb P)\circ f_{\vec \xi}^{-1}$ and imitating the proof of \cref{3.IndependentPoly}, we obtain the desired result.
\end{proof}
\begin{cor}
Let $N\in\N$ and let $t\in\{0,...,N\}$. For any $a_1,...,a_N\in\Z$ and any linearly independent polynomials $v_1,...,v_N\in\Z[x]$ with $v_j(0)=0$ for each $j\in\{1,...,N\}$, there exists a non-trivial weakly mixing Gaussian system $(\R^\Z,\mathcal A,\gamma, T)$ and an increasing sequence $(n_k)_{k\in\N}$ in $\N$ such that for any $A,B\in\mathcal A$,
 $$\lim_{k\rightarrow\infty}\gamma(A\cap T^{- v_j(n_k)}B)=\begin{cases}
 \gamma(A\cap T^{-a_j} B)\text{ if }j\leq t,\\
 \gamma(A)\gamma(B)\text{ if }j\in\{1,...,N\}\setminus\{0,...,t\}.
 \end{cases}$$
 \end{cor}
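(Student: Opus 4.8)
The plan is to derive this corollary from \cref{4.MainResultExistence} by absorbing the shifts $a_1,\dots,a_t$ into the iterating sequences and then undoing them by an elementary change of variables. First I would set $\phi_j(k)=v_j(k)-a_j$ for $j\in\{1,\dots,t\}$ and $\phi_j(k)=v_j(k)$ for $j\in\{t+1,\dots,N\}$, obtaining sequences $\phi_1,\dots,\phi_N:\N\to\Z$. The key preliminary check is that $\phi_1,\dots,\phi_N$ are asymptotically independent: for $\vec c=(c_1,\dots,c_N)\in\Z^N\setminus\{\vec 0\}$ we have $\sum_{j=1}^N c_j\phi_j(k)=\bigl(\sum_{j=1}^N c_j v_j(k)\bigr)-\sum_{j=1}^t c_j a_j$, and since $v_1,\dots,v_N$ are $\Q$-linearly independent with $v_j(0)=0$, the polynomial $\sum_{j=1}^N c_j v_j$ is nonzero with zero constant term, hence non-constant, so $|\sum_{j=1}^N c_j\phi_j(k)|\to\infty$. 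This is the only point at which the hypothesis $v_j(0)=0$ is used.

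Next I would apply \cref{4.MainResultExistence} to $\phi_1,\dots,\phi_N$ to get an increasing sequence $(n_k)_{k\in\N}$ in $\N$ with the stated property for every $\vec\xi\in\{0,1\}^N$, and then specialize to $\vec\xi=(\xi_1,\dots,\xi_N)$ with $\xi_j=0$ for $j\le t$ and $\xi_j=1$ for $j>t$. Writing $(\R^\Z,\mathcal A,\gamma,T):=(\R^\Z,\mathcal A,\gamma_{\vec\xi},T_{\vec\xi})$, this is a non-trivial weakly mixing Gaussian system and \eqref{4.MixingOrRigidExpression} gives, for all $A,B\in\mathcal A$ and all $j$,
\[
\lim_{k\to\infty}\gamma\bigl(A\cap T^{-\phi_j(n_k)}B\bigr)=(1-\xi_j)\,\gamma(A\cap B)+\xi_j\,\gamma(A)\gamma(B).
\]
For $j>t$ we have $\phi_j(n_k)=v_j(n_k)$ and the right-hand side equals $\gamma(A)\gamma(B)$, which is the desired conclusion. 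For $j\le t$ the identity reads $\lim_k\gamma(A\cap T^{-(v_j(n_k)-a_j)}B)=\gamma(A\cap B)$; replacing $B$ by $T^{-a_j}B$, invoking the $T$-invariance of $\gamma$, and using $T^{-(v_j(n_k)-a_j)}T^{-a_j}=T^{-v_j(n_k)}$, I conclude $\lim_k\gamma(A\cap T^{-v_j(n_k)}B)=\gamma(A\cap T^{-a_j}B)$.

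I do not anticipate a real obstacle here: the content of the corollary is genuinely contained in \cref{4.MainResultExistence}, and the only new idea is that a fixed integer shift can be pushed into the iterating sequence without disturbing asymptotic independence. The single step deserving a moment of attention is that verification of asymptotic independence, together with a glance at the edge cases $t=0$ (where $\vec\xi=(1,\dots,1)$ and every sequence is in the mixing regime) and $t=N$ (where $\vec\xi=(0,\dots,0)$ and every sequence is in the rigid regime), in both of which the argument is unchanged.
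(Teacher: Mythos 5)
Your proposal is correct and is essentially the paper's own argument: the paper likewise sets $\phi_j=v_j-a_j$ for $j\le t$ and $\phi_j=v_j$ for $j>t$, notes these are asymptotically independent, and applies \cref{4.MainResultExistence}. Your added details (the explicit independence check via the non-constancy of $\sum_j c_jv_j$ and the substitution $B\mapsto T^{-a_j}B$ to recover the shifted limit) are exactly the steps the paper leaves implicit.
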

 \begin{proof}
  For each $j\in\{0,...,t\}\setminus\{0\}$ let  $(\phi_j(k))_{k\in\N}=(v_j(k)-a_j)_{k\in\N}$ and for each $j\in\{1,...,N\}\setminus\{0,...,t\}$,  let $(\phi_j(k))_{k\in\N}=(v_j(k))_{k\in\N}$. The result now follows by applying \cref{4.MainResultExistence} to the asymptotically independent sequences $\phi_1,...,\phi_N$. 
 \end{proof}
\section{Interpolating between rigidity and mixing}\label{Section6InterpolationResults}
Our goal in this section is to prove \cref{0.MainInterpolationResult} and obtain \cref{0.IndependentPolysStepinResult} as a corollary. We now restate \cref{0.MainInterpolationResult}. (Recall that we denote by $\N^\N_\infty$ the set of all (strictly) increasing sequences $(n_k)_{k\in\N}$ in $\N$.)
\begin{thm}\label{5.MainInterpolationResult}
Let $N\in\N$, let $\lambda_1,...,\lambda_N\in[0,1]$ and let $\phi_1,...,\phi_N:\N\rightarrow\Z$. Suppose that $\phi_1,...,\phi_N$ satisfy the  following condition: 
\begin{adjustwidth}{0.5cm}{0.5cm}
\underline{Condition C}: There exists an $(n_k)_{k\in\N}\in\N^\N_\infty$ such that for any $\vec \xi=(\xi_1,...,\xi_N)\in\{0,1\}^N$, there exists an aperiodic  $T_{\vec \xi}\in\text{Aut}([0,1],\mathcal B,\mu)$ with the property that for each $j\in\{1,...,N\}$ and any $A,B\in\mathcal B$,
\begin{equation*}
\lim_{k\rightarrow\infty}\mu(A\cap T_{\vec \xi}^{- \phi_j(n_k)}B)=(1-\xi_j)\mu(A\cap B)+\xi_j\mu(A)\mu(B).
\end{equation*}
\end{adjustwidth}
Then the set 
\begin{multline*}
\mathcal O(\phi_1,...,\phi_N)=\{T\in\text{Aut}([0,1],\mathcal B,\mu)\,|\exists (k_\ell)_{\ell\in\N}\in\N^\N_\infty\,\forall j\in\{1,...,N\}\,\forall A,B\in\mathcal B,\\
\lim_{\ell\rightarrow\infty}\mu(A\cap T^{- \phi_j(n_{k_\ell})}B)=(1-\lambda_j)\mu(A\cap B)+\lambda_j\mu(A)\mu(B)\}
\end{multline*}
is a dense $G_\delta$ set.
\end{thm}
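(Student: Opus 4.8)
\emph{Step 1: $\mathcal O(\phi_1,\dots,\phi_N)$ is $G_\delta$.} This part does not use Condition C. Fix a countable set $\mathcal D=\{A_1,A_2,\dots\}$ dense in the measure algebra for the metric $d(A,B)=\mu(A\triangle B)$, and put $c_j(A,B)=(1-\lambda_j)\mu(A\cap B)+\lambda_j\mu(A)\mu(B)$. For $q,L\in\N$ let
\[
W_{q,L}=\bigcup_{k>L}\;\bigcap_{j=1}^{N}\;\bigcap_{1\le i,i'\le q}\Bigl\{T\in\mathrm{Aut}([0,1],\mathcal B,\mu)\;:\;\bigl|\mu(A_i\cap T^{-\phi_j(n_k)}A_{i'})-c_j(A_i,A_{i'})\bigr|<\tfrac1q\Bigr\}.
\]
For fixed $m\in\Z$ the map $T\mapsto\mu(A\cap T^{-m}B)=\langle U_T^{m}\mathbf 1_B,\mathbf 1_A\rangle_{L^2}$ is continuous in the weak topology, so each innermost set is open, each $W_{q,L}$ is open, and $\mathcal G:=\bigcap_{q,L}W_{q,L}$ is $G_\delta$. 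That $\mathcal G=\mathcal O(\phi_1,\dots,\phi_N)$ is then obtained by a standard argument: ``$\supseteq$'' is immediate, while for ``$\subseteq$'' one extracts, by a diagonalization over $q$ and $L$, a single increasing sequence witnessing the required limit for every member of $\mathcal D$, and upgrades to all $A,B\in\mathcal B$ using that $(A,B)\mapsto\mu(A\cap T^{-m}B)$ and $(A,B)\mapsto c_j(A,B)$ are Lipschitz in $d$ uniformly in $m$ and $j$.

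\emph{Step 2: reducing density to a single transformation.} If $T\in\mathcal O(\phi_1,\dots,\phi_N)$ with witnessing sequence $(k_\ell)$ and $S\in\mathrm{Aut}([0,1],\mathcal B,\mu)$, then replacing $A,B$ by $S^{-1}A,S^{-1}B$ in the defining limit shows $STS^{-1}\in\mathcal O(\phi_1,\dots,\phi_N)$ with the same $(k_\ell)$; hence $\mathcal O(\phi_1,\dots,\phi_N)$ is conjugation invariant. Since conjugacy classes of aperiodic automorphisms are dense in $\mathrm{Aut}([0,1],\mathcal B,\mu)$ (a consequence of the Rokhlin lemma), it suffices to produce one aperiodic $T^{\ast}\in\mathcal O(\phi_1,\dots,\phi_N)$: then $\overline{\mathcal O(\phi_1,\dots,\phi_N)}\supseteq\overline{\{ST^{\ast}S^{-1}:S\in\mathrm{Aut}\}}=\mathrm{Aut}([0,1],\mathcal B,\mu)$.

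\emph{Step 3: constructing $T^{\ast}$ --- the interpolation.} This is where Condition C is used. Let $(n_k)$ and the aperiodic transformations $T_{\vec\xi}$, $\vec\xi\in\{0,1\}^N$, be as in Condition C; in the language of self-joinings this says $\Delta^{T_{\vec\xi}}_{\phi_j(n_k)}\to(1-\xi_j)\Delta_0+\xi_j(\mu\times\mu)$ for each $j$, equivalently $U_{T_{\vec\xi}}^{\phi_j(n_k)}\to(1-\xi_j)I+\xi_j E$ in the weak operator topology, where $E$ is the orthogonal projection onto the constants. The goal is a single aperiodic $T^{\ast}$ together with a subsequence $(n_{k_\ell})$ of $(n_k)$ such that $U_{T^{\ast}}^{\phi_j(n_{k_\ell})}\to(1-\lambda_j)I+\lambda_j E$ for every $j$ simultaneously; by Step 2 this proves the theorem. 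Following a modification of St\"epin's interpolation technique \cite{stepin1987spectral}, $T^{\ast}$ is produced by a cutting-and-stacking / rank-one type construction whose ingredients are governed by the $T_{\vec\xi}$: one first passes to a sufficiently sparse subsequence $(n_{k_\ell})$, and then assembles a tower so that translating by $\phi_j(n_{k_\ell})$ realigns a proportion $1-\lambda_j$ of the mass --- producing the $\Delta_0$ term --- while dispersing the complementary proportion $\lambda_j$ --- producing the $\mu\times\mu$ term --- and so that this holds uniformly in $j$. (In favourable cases this can be phrased spectrally, arranging the maximal spectral type of $T^{\ast}$ to be a generalized Riesz product adapted to the frequencies $\phi_j(n_{k_\ell})$, with $\widehat{\tau}(\phi_j(n_{k_\ell})+m)\to(1-\lambda_j)\widehat{\tau}(m)$ for all $m$; simplicity of the spectrum then forces the desired operator limit, and an explicit model analogous to \cref{3.IndependentPoly} can be used to check it directly.) I expect this to be the main obstacle: the target limiting self-joining $(1-\lambda_j)\Delta_0+\lambda_j(\mu\times\mu)$ is not extremal and \emph{cannot} be obtained from products or disjoint unions of the $T_{\vec\xi}$ --- those only realize $\lambda_j\in\{0,1\}$ --- so a genuine interpolation is required, together with the combinatorics of arranging $\phi_j(n_{k_\ell})$ so that the partial rigidities with coefficients $1-\lambda_1,\dots,1-\lambda_N$ can coexist.

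\emph{Step 4: deducing \cref{0.IndependentPolysStepinResult}.} For asymptotically independent $\phi_1,\dots,\phi_N:\N\to\Z$, \cref{4.MainResultExistence} furnishes a sequence $(n_k)$ and \emph{weakly mixing} --- hence aperiodic --- transformations $T_{\vec\xi}$, $\vec\xi\in\{0,1\}^N$, satisfying precisely the conclusion of Condition C; thus Condition C holds and \cref{5.MainInterpolationResult} applies. Since $\mathcal O(\phi_1,\dots,\phi_N)$ is contained in the set appearing in \cref{0.IndependentPolysStepinResult}, and the latter is itself $G_\delta$ by the argument of Step 1, that larger set is a dense $G_\delta$ as well, which is \cref{0.IndependentPolysStepinResult}.
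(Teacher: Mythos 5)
Your Steps 1, 2 and 4 are fine, but Step 3 --- the actual interpolation, which is the entire content of the theorem --- is not a proof. You correctly identify the obstacle (the limit joining $(1-\lambda_j)\Delta_0+\lambda_j(\mu\times\mu)$ is not extremal, and a disjoint union of the $T_{\vec\xi}$ only realizes it on sets that meet every piece proportionally), you declare that ``a genuine interpolation is required,'' and then you gesture at a cutting-and-stacking or Riesz-product construction without carrying it out or even specifying its parameters. Under the bare hypothesis of Condition C (arbitrary $\phi_j$, arbitrary aperiodic $T_{\vec\xi}$, no spectral or rank-one structure available), nothing in your sketch explains how the tower heights, spacers, or the subsequence $(n_{k_\ell})$ are to be chosen so that \emph{all} $N$ partial rigidities with the prescribed coefficients $1-\lambda_j$ hold simultaneously for a single $T^{\ast}$. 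So the proposal, as written, does not prove the theorem.

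The reduction in Step 2, while logically sound, is also what makes your task unnecessarily hard: you insist on producing one aperiodic $T^{\ast}$ lying in $\mathcal O(\phi_1,\dots,\phi_N)$ itself, i.e.\ satisfying the limit for \emph{all} $A,B\in\mathcal B$. The paper never does this. It writes $\mathcal O(\phi_1,\dots,\phi_N)=\bigcap_{q,\ell}\mathcal O(q,\ell)$ with each $\mathcal O(q,\ell)$ open (a finite-precision condition involving only dyadic sets of rank at most $\ell$ and a single time $\phi_j(n_k)$), and proves that each $\mathcal O(q,\ell)$ is \emph{dense}; Baire category then yields the dense $G_\delta$. Density of $\mathcal O(q,\ell)$ is exactly where the disjoint-union idea you dismissed succeeds: given $T_0$ and $\epsilon$, approximate $T_0$ by a cyclic permutation $R$ of dyadic intervals of rank $\ell'\ge\ell$, order the $\lambda_j$, take from Condition C the ``threshold'' transformations $T_1,\dots,T_{N+1}$, use the Halmos conjugacy lemma (this is where aperiodicity enters) to replace each $T_t$ by a conjugate with $\partial(R,T_t)<\epsilon/4$, and then splice them together: inside each dyadic interval of rank $\ell'$ reserve a sub-slab of relative width $\delta_t=\lambda_t-\lambda_{t-1}$ and run an affinely rescaled copy of $T_t$ on the union of the $t$-th slabs. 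Because every dyadic set of rank at most $\ell'$ meets each slab in exact proportion $\delta_t$, the limit along $(\phi_j(n_k))$ on such sets is precisely $(1-\lambda_j)\mu(E\cap F)+\lambda_j\mu(E)\mu(F)$, which puts the spliced map in $\mathcal O(q,\ell)$ while keeping it $\epsilon$-close to $T_0$. In other words, the non-extremality problem you flagged is real for your global $T^{\ast}$, but it evaporates once one only asks for membership in the open approximating sets; your Step 3 would need a substantially new construction, whereas the paper's route needs none.
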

Before proving \cref{5.MainInterpolationResult}, we will review the necessary background material on  $\text{Aut}([0,1],\mathcal B,\mu)$.
\subsection{Background on $\text{Aut}([0,1],\mathcal B,\mu)$}\label{Subsection6.1AutBackground}
We will follow the material and the terminology in \cite{halmosBooklectures}. For each $\ell\in\N$, let $E_\ell$ denote the family of the half-open intervals 
$$[\frac{k}{2^\ell},\frac{k+1}{2^\ell}),\,k\in\{0,...,2^\ell-1\}.$$
We call each element of $E_\ell$ a \textbf{dyadic interval of rank $\ell$}. Define the metric $\partial$ on  $\textit{Aut}([0,1],\mathcal B,\mu)$ by
\begin{equation}\label{5.TheIncompleteMetric}
\partial(T,S)=\sum_{\ell\in\N}\frac{1}{2^{2\ell}}\sum_{E \in E_\ell}\mu(T E\triangle S E),
\end{equation}
where $T E\triangle S E$ denotes the symmetric difference between the sets $TE$ and $SE$. The topology induced by  $\partial$ is called the weak topology of $\textit{Aut}([0,1],\mathcal B,\mu)$.\\
With this topology, a sequence $(T_k)_{k\in\N}$ in $\textit{Aut}([0,1],\mathcal B,\mu)$ converges to $T\in\textit{Aut}([0,1],\mathcal B,\mu)$ if and only if $(T_k)_{k\in\N}$ converges to $T$ with respect to the weak operator topology on $L^2(\mu)$ if and only if $(T_k)_{k\in\N}$ converges to $T$ in the strong operator topology on $L^2(\mu)$. Furthermore, $(\textit{Aut}([0,1],\mathcal B,\mu),\partial)$ is a topological group. \\
We remark that while $\textit{Aut}([0,1],\mathcal B,\mu)$ with the weak topology is completely metrisable, the metric space  $(\textit{Aut}([0,1],\mathcal B,\mu),\partial)$ is not complete (i.e. not every Cauchy sequence needs to be convergent).\\

We now turn our attention to some of the dense subsets of $(\textit{Aut}([0,1],\mathcal B,\mu),\partial)$. Given $\ell\in\N$, a transformation $T\in\textit{Aut}([0,1],\mathcal B,\mu)$ is a \textbf{cyclic permutation of the dyadic intervals of rank $\ell$}, if  for any $E\in E_\ell$: (a) $TE\in E_\ell$, (b) there exists an $\alpha\in \R$ such that for any $x\in E$, $Tx=x+\alpha$, and (c) $E_\ell=\{E, TE,...,T^{2^{\ell}-1}E\}$. The following result states  that the cyclic permutations of dyadic intervals are dense in $\textit{Aut}([0,1],\mathcal B,\mu)$ \cite[page 65]{halmosBooklectures}.
\begin{lem}\label{5.WeakAppThm}
Let $T\in \textit{Aut}([0,1],\mathcal B,\mu)$ and let $\epsilon>0$. Then there exists an $\ell_\epsilon\in\N$ such that for any  $\ell>\ell_\epsilon$ there exists a cyclic permutation $S$ of the dyadic intervals of rank $\ell$ such that $\partial (T,S)<\epsilon$. 
\end{lem}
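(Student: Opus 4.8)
The plan is to run the classical two-step argument (this is the dyadic-interval form of the Halmos--Rokhlin weak approximation theorem, \cite[p.~65]{halmosBooklectures}): first approximate $T$ in the metric $\partial$ by a \emph{permutation of dyadic intervals} --- a transformation that maps each dyadic atom onto another atom by a translation --- and then perform a cheap combinatorial surgery to turn that permutation into a \emph{cyclic} one of the prescribed rank $\ell$. The only soft fact used repeatedly is that the tail of the series defining $\partial$ is harmless: since $\mu(TE\triangle SE)\le 2\mu(E)$ for every dyadic $E$, one has $\sum_{r>r_0}2^{-2r}\sum_{E\in E_r}\mu(TE\triangle SE)\le\tfrac23 4^{-r_0}$ for \emph{any} $S\in\textit{Aut}([0,1],\mathcal B,\mu)$, so several error terms below become negligible merely by taking a relevant rank large.

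The first step I would prove is the approximation: for every $\delta>0$ there are a rank $\ell_0$ and a permutation $\sigma$ of the dyadic intervals of rank $\ell_0$ such that $\partial(T,T_\sigma)<\delta$, where $T_\sigma$ maps each rank-$\ell_0$ atom $E'$ onto $\sigma(E')$ by the appropriate translation (a piecewise-translation bijection, hence in $\textit{Aut}([0,1],\mathcal B,\mu)$). Fix $L$ with $\tfrac23 4^{-L}<\delta/2$; by the tail remark it suffices to make $\mu(TE\triangle T_\sigma E)$ small for the finitely many dyadic intervals $E$ of rank $\le L$, and, since a coarser dyadic interval splits into rank-$L$ ones and the symmetric difference splits additively, only for those of rank exactly $L$. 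The sets $TE$, $E\in E_L$, are $2^L$ disjoint Borel sets of measure $2^{-L}$ with union $[0,1]$; by regularity of Lebesgue measure, for $\ell_0$ large each $TE$ lies within measure $\delta_1$ of a union $G_E$ of rank-$\ell_0$ atoms. Disjointifying the $G_E$, absorbing the leftover atoms, and transferring a few atoms between classes so that each class consists of exactly $2^{\ell_0-L}$ atoms, produces a partition $\{F_E\}_{E\in E_L}$ of the rank-$\ell_0$ atoms with $\mu(TE\triangle F_E)\to 0$ as $\delta_1\to0$. Any bijection from the atoms of $E$ onto the atoms of $F_E$, amalgamated over $E\in E_L$, is the desired $\sigma$ (the choice of bijection within each $F_E$ is irrelevant, as it affects only ranks $>L$). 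Finally, lifting a permutation of $E_{\ell_0}$ to $E_{\ell_0+1}$ (halve each atom, keep the same translations) does not change $T_\sigma$, so afterwards I may inflate $\ell_0$ freely; I do so until also $\tfrac23 4^{-\ell_0}<\delta/2$. Apply this with $\delta=\epsilon/4$.

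Now fix a target rank $\ell>\ell_0$ and set $m=\ell-\ell_0$. Identify the rank-$\ell$ atoms with pairs $(E',j)$, $E'$ a rank-$\ell_0$ atom and $j\in\{0,\dots,2^m-1\}$, so that $T_\sigma$ is $(E',j)\mapsto(\sigma E',j)$. For any permutations $\pi_{E'}$ of $\{0,\dots,2^m-1\}$ the map $S'\colon (E',j)\mapsto(\sigma E',\pi_{E'}(j))$ is again a permutation of $E_\ell$, and it \emph{coincides with $T_\sigma$ on every dyadic interval of rank $\le\ell_0$} --- permuting the $j$-coordinate fixes each such interval as a set --- so $\partial(T_\sigma,S')\le\tfrac23 4^{-\ell_0}<\epsilon/4$ no matter how the $\pi_{E'}$ are chosen. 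I then choose them combinatorially: for each cycle of $\sigma$ pick one atom of it, set $\pi$ there equal to a fixed $2^m$-cycle, and the identity on all other atoms; then the block of rank-$\ell$ atoms lying over each $\sigma$-cycle is a single $S'$-cycle, so $S'$ has at most $2^{\ell_0}$ cycles. Composing $S'$ with at most $2^{\ell_0}-1$ transpositions of rank-$\ell$ atoms (one per cycle merge) produces a cyclic permutation $S$ of $E_\ell$ differing from $S'$ on at most $2^{\ell_0+1}$ atoms, whence a routine estimate gives $\partial(S',S)\le\tfrac43\,2^{-(\ell-\ell_0)}$. Choosing $\ell_\epsilon>\ell_0$ so large that $\tfrac43\,2^{-(\ell-\ell_0)}<\epsilon/2$ for all $\ell>\ell_\epsilon$, the triangle inequality yields $\partial(T,S)\le\partial(T,T_\sigma)+\partial(T_\sigma,S')+\partial(S',S)<\tfrac\epsilon4+\tfrac\epsilon4+\tfrac\epsilon2=\epsilon$, as required.

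The main obstacle is the approximation step, and it is conceptual rather than computational: one \emph{cannot} approximate a general $T$ by a dyadic permutation at a prescribed scale. If $T$ scrambles the partition $E_{\ell_0}$ (for instance if $T$ is mixing) then the doubly stochastic matrix $(2^{\ell_0}\mu(TE'\cap E''))_{E',E''}$ is nearly constant, so Birkhoff--von Neumann supplies no permutation $\sigma$ with $\sigma E'$ close to $TE'$ atom-by-atom; the rank $\ell_0$ must instead depend on $T$ and $\epsilon$, and the approximation must be organized around the finitely many \emph{images} $TE$, $E\in E_L$, not around a single bistochastic matrix. Everything after that is bookkeeping, the one genuine trick being to perform the cycle-merging surgery at the fine rank $\ell$ (through the $\pi_{E'}$), so that its cost is $O(2^{-(\ell-\ell_0)})$ rather than $O(1)$. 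Alternatively one may simply invoke \cite[p.~65]{halmosBooklectures} for the approximation step and supply only the surgery.
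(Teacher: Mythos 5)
The paper does not actually prove this lemma: it is quoted verbatim from page 65 of Halmos's \emph{Lectures on Ergodic Theory}, so the only thing to compare against is that citation, and your argument is a correct reconstruction of the classical proof behind it. Both steps check out: the approximation of $T$ by a rank-$\ell_0$ dyadic permutation $T_\sigma$ is the standard regularity/counting argument (which, as you note, one may also simply cite), and the surgery at the fine rank $\ell$ is sound --- your $S'$ agrees with $T_\sigma$ on every dyadic set of rank at most $\ell_0$, so $\partial(T_\sigma,S')\le\tfrac{2}{3}4^{-\ell_0}$, and merging the at most $2^{\ell_0}$ cycles alters $S'$ on at most $2^{\ell_0+1}$ rank-$\ell$ atoms, giving $\partial(S',S)\le\tfrac{4}{3}2^{-(\ell-\ell_0)}$ and hence a cyclic permutation of every rank $\ell>\ell_\epsilon$ within $\epsilon$ of $T$, exactly as the lemma requires.
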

Recall that a transformation  $T\in\textit{Aut}([0,1],\mathcal B,\mu)$ is called aperiodic if the set of $x\in[0,1]$ for which there exists an $n\in\N$ with $T^nx=x$ has measure zero. \cref{5.ConjugacyLemma} below  asserts that the conjugacy class of any aperiodic $T\in\textit{Aut}([0,1],\mathcal B,\mu)$ is dense \cite[page 77]{halmosBooklectures}.
\begin{lem}\label{5.ConjugacyLemma}
Let $T_0\in \textit{Aut}([0,1],\mathcal B,\mu)$ and let $\epsilon>0$. For any aperiodic $T\in\textit{Aut}([0,1],\mathcal B,\mu)$ there exists an $S\in \textit{Aut}([0,1],\mathcal B,\mu)$ such that $\partial (T_0,S^{-1}TS)<\epsilon$.
\end{lem}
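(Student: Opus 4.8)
The plan is to run the classical argument behind the conjugacy lemma: replace $T_0$ by a cyclic permutation of dyadic intervals via \cref{5.WeakAppThm}, reproduce that finite combinatorial picture inside the aperiodic $T$ by means of a Rokhlin tower, and take $S$ to be the change of coordinates identifying the two pictures. The one external ingredient beyond what is recalled above is the classical Rokhlin tower lemma for aperiodic transformations.

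First a reduction. For any $P,Q\in\textit{Aut}([0,1],\mathcal B,\mu)$ and any $E\in E_\ell$ one has $\mu(PE\triangle QE)\le 2\mu(E)$, so $\sum_{E\in E_\ell}\mu(PE\triangle QE)\le 2$ and the tail $\sum_{\ell>L}2^{-2\ell}\sum_{E\in E_\ell}\mu(PE\triangle QE)$ is at most $2\sum_{\ell>L}2^{-2\ell}<\epsilon/3$ once $L$ is large. Hence it suffices to produce $S$ such that $S^{-1}TS$ agrees with a suitable target, up to small total measure, on the finitely many dyadic intervals of rank at most $L$. Using \cref{5.WeakAppThm}, fix $\ell\ge L$ (as large as we wish) and a cyclic permutation $P$ of the dyadic intervals of rank $\ell$ with $\partial(T_0,P)<\epsilon/3$; list these $n=2^\ell$ intervals as $I_0,\dots,I_{n-1}$ so that $PI_k=I_{k+1\bmod n}$.

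Now apply the Rokhlin tower lemma to $T$: given $\delta>0$ there is $F\in\mathcal B$ with $F,TF,\dots,T^{n-1}F$ pairwise disjoint and $\mu\big(\bigcup_{k=0}^{n-1}T^kF\big)\ge 1-\delta$. Put $C_k=T^kF$, let $R$ be the complement of their union, so $\mu(R)\le\delta$ and the $C_k$ have common measure $c=(1-\mu(R))/n\le 1/n$. Choose a measurable $I_k'\subseteq I_k$ with $\mu(I_k')=c$ for every $k$, and let $S\in\textit{Aut}([0,1],\mathcal B,\mu)$ be any measure preserving transformation carrying $I_k'$ onto $C_k$ for each $k$ and $\bigcup_k(I_k\setminus I_k')$ onto $R$; such an $S$ exists because $([0,1],\mathcal B,\mu)$ is a Lebesgue space, so arbitrary measure preserving identifications of Borel sets of equal measure can be realized. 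For $k\ne n-1$ we get $(S^{-1}TS)I_k'=S^{-1}(TC_k)=S^{-1}(C_{k+1})=I_{k+1}'$, so on $\bigcup_kI_k'$ the conjugate reproduces $P$ exactly, except on the top level $I_{n-1}'$ (where $T^nF$ need not be $F$); on $\bigcup_k(I_k\setminus I_k')$, of measure $\mu(R)$, and on $I_{n-1}'$, of measure $c$, the map $S^{-1}TS$ is uncontrolled. Since every dyadic interval $J$ of rank $m\le L\le\ell$ is a union of rank-$\ell$ intervals, it follows that $\mu\big((S^{-1}TS)J\triangle PJ\big)\le 2(\mu(R)+c)\le 2\delta+2^{1-\ell}$, and therefore, using $|E_m|=2^m$,
$$\sum_{m\le L}2^{-2m}\sum_{J\in E_m}\mu\big((S^{-1}TS)J\triangle PJ\big)\ \le\ \big(2\delta+2^{1-\ell}\big)\sum_{m\ge 1}2^{-m}\ =\ 2\delta+2^{1-\ell}.$$
Taking $\ell$ large and then $\delta$ small makes this $<\epsilon/3$; with the tail estimate this yields $\partial(S^{-1}TS,P)<2\epsilon/3$, and then $\partial(T_0,S^{-1}TS)\le\partial(T_0,P)+\partial(P,S^{-1}TS)<\epsilon$.

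The genuine work — and the only delicate point — is the bookkeeping at the top of the Rokhlin tower and on the error set $R$, where $S^{-1}TS$ cannot be forced to coincide with $P$; the resolution is simply that these sets carry total measure $O(2^{-\ell}+\delta)$, so once the tower is tall ($\ell$ large) and accurate ($\delta$ small) they contribute negligibly to $\partial$. The remaining point, that the glued map $S$ is a genuine element of $\textit{Aut}([0,1],\mathcal B,\mu)$, is immediate from the Lebesgue-space structure of $([0,1],\mathcal B,\mu)$.
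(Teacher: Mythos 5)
Your proof is correct. The paper does not actually prove this lemma---it is quoted from Halmos's book (\cite[page 77]{halmosBooklectures})---and your argument (approximate $T_0$ by a cyclic permutation of the dyadic intervals of rank $\ell$ via \cref{5.WeakAppThm}, build a Rokhlin tower of height $2^{\ell}$ for the aperiodic $T$, and let $S$ identify the tower levels with the dyadic intervals) is precisely the classical proof behind that citation, with the error terms coming from the tower top and the residual set $R$ estimated correctly.
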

\subsection{The proof of \cref{0.IndependentPolysStepinResult} and \cref{0.MainInterpolationResult}}\label{Subsection6.2TheProofOfStepin}
\begin{proof}[Proof of \cref{5.MainInterpolationResult}]
Let the sequence $(n_k)_{k\in\N}$ in $\N$ be as in the statement of \cref{5.MainInterpolationResult}. Recall that for each $\ell\in\N$,  $E_\ell$ denotes the family of all dyadic intervals of rank $\ell$ and let $E(\ell)=\bigcup_{r=1}^\ell E_r$. For each $q,\ell\in\N$, define $\mathcal O(q,\ell)$ to be  the set
\begin{multline*}
\bigcup_{k=\ell}^\infty\bigcap_{E,F\in E(\ell)}\bigcap_{j=1}^N\{T\in \text{Aut}([0,1],\mathcal B,\mu)\,|\,|\mu(E\cap T^{- \phi_j(n_k)}F)-(1-\lambda_j)\mu(E\cap F)-\lambda_j\mu(E)\mu(F)|<\frac{1}{q}\}.
\end{multline*}

Our first claim is that $\mathcal O(\phi_1,...,\phi_N)=\bigcap_{q,\ell\in\N}\mathcal O(q,\ell)$. Clearly, if $T\in\mathcal O(\phi_1,...,\phi_N)$, then $T\in\bigcap_{q,\ell\in\N}\mathcal O(q,\ell)$. Now suppose that $T\in\bigcap_{q,\ell\in\N}\mathcal O(q,\ell)$. It follows that for each $\ell\in\N$ we can find a $k_\ell\geq\ell$ such that 
$$T\in \bigcap_{E,F\in E(\ell)}\bigcap_{j=1}^N\{S\in \text{Aut}([0,1],\mathcal B,\mu)\,|\,|\mu(E\cap S^{- \phi_j(n_{k_\ell})}F)-(1-\lambda_j)\mu(E\cap F)-\lambda_j\mu(E)\mu(F)|<\frac{1}{\ell}\}.$$
By passing to a subsequence, if needed, we can assume that $(k_\ell)_{\ell\in\N}$ is increasing. Furthermore, for any $m\in\N$, any $j\in\{1,...,N\}$, and any  $E,F\in E(m)$,
\begin{equation}\label{5.TIsInO(f_1...,f_N)}
\lim_{\ell\rightarrow\infty}\mu(E\cap T^{- \phi_j(n_{k_\ell})}F)=(1-\lambda_j)\mu(E\cap F)+\lambda_j\mu(E)\mu(F).
\end{equation}
Note that for a fixed $F\in\mathcal B$, the set $\mathcal E_F$ of those $E\in\mathcal B$ for which \eqref{5.TIsInO(f_1...,f_N)} holds is a $\lambda$-system\footnote{
Let $D$ be a family of subsets of a non-empty set  $X$. $D$ is a $\lambda$-system if: (1) $X\in D$, (2) if $A,B\in D$ and  $A\subseteq B$, then $B\setminus A\in D$, and (3) for any collection of sets $\{A_n\,|\,n\in\N\}\subseteq D$ with $A_1\subseteq A_2\subseteq\cdots$, one has $\bigcup_{n\in\N}A_n\in D$.
}
and that for a fixed $E\in\mathcal B$, the set $\Phi_E$ of those $F\in\mathcal B$ for which \eqref{5.TIsInO(f_1...,f_N)} holds is a $\lambda$-system
as well. Also note that  $\bigcup_{\ell\in\N}E_\ell\cup\{\emptyset\}$ is a $\pi$-system\footnote{
Let $P$ be a family of subsets of a non-empty set  $X$. $P$ is a $\pi$-system if $P$ is non-empty and  for any $A,B\in P$,  $A\cap B\in P$.
} with $\bigcup_{\ell\in\N}E_\ell\cup\{\emptyset\}\subseteq \mathcal E_F$ for each $F\in\bigcup_{\ell\in\N}E_\ell$. By  applying the $\pi$-$\lambda$ Theorem (see, for example, \cite[Theorem 2.1.6]{durrett2019probability}) to each
$\mathcal E_F$, $F\in\bigcup_{\ell\in\N}E_\ell$, we see that  \eqref{5.TIsInO(f_1...,f_N)} holds for any $E\in\mathcal B$ and any 
$F\in\bigcup_{\ell\in\N}E_\ell$. Applying the  $\pi$-$\lambda$ Theorem again but now to each $\Phi_E$, $E\in\mathcal B$,  we obtain that  
\eqref{5.TIsInO(f_1...,f_N)} holds 
for arbitrary $E,F\in\mathcal B$  and hence $T\in\mathcal O(\phi_1,...,\phi_N)$.\\

We now show that $\mathcal O(\phi_1,...,\phi_N)$ is $G_\delta$. For any  $E,F\in\bigcup_{\ell\in\N}E_\ell$ define the map $$I_{E,F}:\text{Aut}([0,1],\mathcal B,\mu)\rightarrow [0,1]$$
by $I_{E,F}(T)=\mu(E\cap TF)$.\\
Note that for any given $E,F\in\bigcup_{\ell\in\N}E_\ell$,  $|I_{E,F}(T)-I_{E,F}(S)|\leq \mu(TF\triangle SF)$ and hence $I_{E,F}$ is continuous (with respect to the weak topology). Recall that $\textit{Aut}([0,1],\mathcal B,\mu)$ is a topological group and so, for any $n\in\Z$, the map $T\mapsto T^n$ is continuous. Thus, for each $n\in\Z$ and any $E,F\in\bigcup_{\ell\in\N}E_\ell$, the map $T\mapsto \mu(E\cap T^nF)$ from $\textit{Aut}([0,1],\mathcal B,\mu)$ to $[0,1]$ is continuous as well.  It now follows that for any $q,\ell\in\N$, $\mathcal O(q,\ell)$ is open and hence $\mathcal O(\phi_1,...,\phi_N)$ is $G_\delta$.\\ 

To prove that $\mathcal O(\phi_1,...,\phi_N)$ is dense, it suffices to show that for any $q,\ell\in\N$, any $T_0\in \text{Aut}([0,1],\mathcal B,\mu)$, and any $\epsilon>0$, there exists a $T\in \mathcal O(q,\ell)$ such that $\partial (T_0,T)<\epsilon$. In what follows we will construct a transformation $T\in\text{Aut}([0,1],\mathcal B,\mu)$ with these properties.\\

Fix $q,\ell\in\N$, $T_0\in \text{Aut}([0,1],\mathcal B,\mu)$, and $\epsilon>0$. By \cref{5.WeakAppThm}, there exists a cyclic permutation $R$ of the dyadic intervals of rank $\ell'$ for some $\ell'\geq \ell$ such that 
\begin{equation}\label{5.Cyclification}
    \frac{1}{2^{\ell'}}<\frac{\epsilon}{4}\text{ and }\partial (T_0,R)<\frac{\epsilon}{2}.
\end{equation}
By reindexing $\phi_1,...,\phi_N$, if needed, we assume without loss of generality that 
$$0\leq \lambda_1\leq\lambda_2\leq \cdots\leq \lambda_N\leq 1$$
(we will actually assume that $0<\lambda_1<\cdots<\lambda_N<1$, the general case is handled similarly).\\
By assumption, there exist aperiodic $T_1,...,T_{N+1}\in\text{Aut}([0,1],\mathcal B,\mu)$ such that for each $t\in\{1,...,N+1\}$, each $j\in\{1,...,N\}$ and each $A,B\in\mathcal B$,
$$\lim_{k\rightarrow\infty}\mu(A\cap T_t^{- \phi_j(n_k)}B)=\begin{cases}
\mu(A)\mu(B),\text{ if }j\geq t,\\
\mu(A\cap B),\text{ if }j<t.
\end{cases}$$
By \cref{5.ConjugacyLemma}, we can assume that for each $t\in\{1,...,N+1\}$
\begin{equation}\label{5.T_tandRareClose}
    \partial (R,T_t)<\frac{\epsilon}{4}.
\end{equation}
Furthermore, since the set $\{T_t^n1\,|\,n\in\Z\}$ has measure zero, we assume without loss of generality  that $T_t(1)=1$. Thus, for each $t\in\{1,...,N+1\}$, $T_t([0,1))=[0,1)$. \\

Let $\lambda_0=0$ and $\lambda_{N+1}=1$. For each $t\in\{1,...,N+1\}$, let $\delta_t=\lambda_t-\lambda_{t-1}$ and let 
$$S_t:[0,1)\rightarrow\bigcup_{r=0}^{2^{\ell'}-1}[\frac{r+\lambda_{t-1}}{2^{\ell'}},\frac{r+\lambda_{t}}{2^{\ell'}})$$
be defined by 
$$S_t(x)=\delta_t(x-\frac{r}{2^{\ell'}})+\frac{r+\lambda_{t-1}}{2^{\ell'}}$$
for any $x\in [\frac{r}{2^{\ell'}},\frac{r+1}{2^{\ell'}})$. We remark that $S_t$ is a bijection and both $S_t$ and $S^{-1}_t$ are measurable.\\
We now define $T:[0,1]\rightarrow[0,1]$ by
$$T(x)=\begin{cases}
S_t\circ T_t\circ S_t^{-1}(x),\text{ if }\exists t\in\{1,...,N+1\},\,x\in\bigcup_{r=0}^{2^{\ell'}-1}[\frac{r+\lambda_{t-1}}{2^{\ell'}},\frac{r+\lambda_{t}}{2^{\ell'}})=S_t([0,1)),\\
1,\,\text{if }x=1.
\end{cases}$$
It now remains to show that (i) $T\in\text{Aut}([0,1],\mathcal B,\mu)$, (ii) $T\in\mathcal O(q,\ell)$, and (iii) $\partial (T_0,T)<\epsilon$.\\

\underline{(i)}: We will now show that $T\in\text{Aut}([0,1],\mathcal B,\mu)$. For each $t\in\{1,...,N+1\}$, 
\begin{equation}\label{5.DefnMapOnFiber}
S_t\circ T_t\circ S_t^{-1}:\bigcup_{r=0}^{2^{\ell'}-1}[\frac{r+\lambda_{t-1}}{2^{\ell'}},\frac{r+\lambda_{t}}{2^{\ell'}})\rightarrow\bigcup_{r=0}^{2^{\ell'}-1}[\frac{r+\lambda_{t-1}}{2^{\ell'}},\frac{r+\lambda_{t}}{2^{\ell'}})
\end{equation}
is an invertible measurable function with measurable inverse $S_t\circ T_t^{-1}\circ S_t^{-1}$. Note that for any measurable $A\subseteq [0,1)$, $\mu(S_t(A))=\delta_t\mu(A)$ and, consequently, for any measurable $A\subseteq S_t([0,1))$,
$\mu(S_t^{-1}A)=\frac{1}{\delta_t}\mu(A)$. It follows that for any measurable $A\subseteq S_t([0,1))$,
\begin{equation}\label{5.MeasurePreservingFiber}
\mu(S_t\circ T_t\circ S_t^{-1}(A))=\delta_t\mu(T_t\circ S_t^{-1}(A))
=\delta_t\mu( S_t^{-1}(A))=\delta_t\cdot\frac{1}{\delta_t}\mu(A)=\mu(A)
\end{equation}
and similarly $\mu(S_t\circ T_t^{-1}\circ S_t^{-1}(A))=\mu(A)$.\\
Let $A\subseteq [0,1]$ be measurable and for each $t\in\{1,...,N+1\}$, let $A_t=A\cap S_t([0,1))$. Since $A=\bigcup_{t=1}^{N+1}A_t$ up to a set of measure zero and $A_1,...,A_{N+1}$ are disjoint, \eqref{5.MeasurePreservingFiber} implies
$$\mu(TA)=\mu(\bigcup_{t=1}^{N+1}TA_t)=\sum_{t=1}^{N+1}\mu(S_t\circ T_t\circ S_t^{-1}(A_t))=\sum_{t=1}^{N+1}\mu(A_t)=\mu(\bigcup_{t=1}^{N+1}A_t)=\mu(A)$$
and $\mu(T^{-1}A)=\mu(A)$. Thus, $T\in\text{Aut}([0,1],\mathcal B,\mu)$.\\

\underline{(ii)}: To prove that $T\in\mathcal O(q,\ell)$, we will first note that for each $E\in E(\ell')$ and each $t\in\{1,...,N+1\}$, $E\cap S_t([0,1))=S_t(E)$. We also note that, by \eqref{5.DefnMapOnFiber}, for any $t\in\{1,...,N+1\}$, $T\big(S_t([0,1))\big)=S_t([0,1))$. Thus, for any $j\in\{1,...,N\}$ and any $E,F\in E(\ell)\subseteq  E(\ell')$,
\begin{multline*}
\lim_{k\rightarrow\infty}\mu(E\cap T^{- \phi_j(n_k)}F)=\lim_{k\rightarrow\infty}\sum_{t=1}^{N+1}\mu(E\cap T^{- \phi_j(n_k)}F\cap S_t([0,1)))\\
=\lim_{k\rightarrow\infty}\sum_{t=1}^{N+1}\mu([E\cap S_t([0,1))]\cap T^{- \phi_j(n_k)}[F\cap S_t([0,1))])=\lim_{k\rightarrow\infty}\sum_{t=1}^{N+1}\mu[S_t(E)\cap T^{- \phi_j(n_k)}(S_tF)]\\
=\lim_{k\rightarrow\infty}\sum_{t=1}^{N+1}\mu[S_t(E)\cap(S_t\circ T_t^{- \phi_j(n_k)}\circ S_t^{-1})(S_tF)]
=\lim_{k\rightarrow\infty}\sum_{t=1}^{N+1}\mu[S_t(E\cap T_t^{- \phi_j(n_k)}F)]\\
=\lim_{k\rightarrow\infty}\sum_{t=1}^{N+1}\delta_t\mu(E\cap T_t^{- \phi_j(n_k)}F)=\sum_{t=j+1}^{N+1}\delta_t\mu(E\cap F)+\sum_{t=1}^j\delta_t\mu(E)\mu(F)\\
=\sum_{t=j+1}^{N+1}(\lambda_t-\lambda_{t-1})\mu(E\cap F)+\sum_{t=1}^j(\lambda_t-\lambda_{t-1})\mu(E)\mu(F)=(1-\lambda_j)\mu(E\cap F)+\lambda_j\mu(E)\mu(F).
\end{multline*}
So $T\in\mathcal O(q,\ell)$.\\

\underline{(iii)}: By \eqref{5.Cyclification}, to prove that $\partial (T_0,T)<\epsilon$, all we need to show is that $\partial (R,T)<\frac{\epsilon}{2}$. Note that for any $E\in E_{\ell'}$, $RE\in E_{\ell'}$. So, for any $E\in E(\ell')$ and any $t\in\{1,...,N+1\}$, $RE\cap S_t([0,1))=S_t(R(E))$.  It follows that 
\begin{multline*}
    \sum_{\ell=1}^{\ell'}\frac{1}{2^{2\ell}}\sum_{E\in E_\ell} \mu(RE\triangle TE)=\sum_{\ell=1}^{\ell'}\frac{1}{2^{2\ell}}\sum_{E\in E_\ell}\sum_{t=1}^{N+1} \mu((RE\triangle TE)\cap S_t([0,1)))\\
    =\sum_{\ell=1}^{\ell'}\frac{1}{2^{2\ell}}\sum_{E\in E_\ell}\sum_{t=1}^{N+1} \mu([RE\cap S_t([0,1))]\triangle [TE\cap S_t([0,1))])=\sum_{\ell=1}^{\ell'}\frac{1}{2^{2\ell}}\sum_{E\in E_\ell}\sum_{t=1}^{N+1} \mu([S_tRE]\triangle [TS_tE])\\
    =\sum_{\ell=1}^{\ell'}\frac{1}{2^{2\ell}}\sum_{E\in E_\ell}\sum_{t=1}^{N+1} \mu([S_tRE]\triangle (S_t\circ T_t \circ S
    ^{-1}_t)(S_tE))= \sum_{\ell=1}^{\ell'}\frac{1}{2^{2\ell}}\sum_{E\in E_\ell}\sum_{t=1}^{N+1} \mu(S_t(RE\triangle T_t E))\\
    =\sum_{\ell=1}^{\ell'}\frac{1}{2^{2\ell}}\sum_{E\in E_\ell}\sum_{t=1}^{N+1}\delta_t\mu(RE\triangle T_tE)=\sum_{t=1}^{N+1}\delta_t\sum_{\ell=1}^{\ell'}\frac{1}{2^{2\ell}}\sum_{E\in E_\ell}\mu(RE\triangle T_tE)\leq \sum_{t=1}^{N+1}\delta_t\partial(R,T_t).
\end{multline*}
By \eqref{5.T_tandRareClose}, $\partial(R,T_t)<\frac{\epsilon}{4}$, so
$$ \sum_{\ell=1}^{\ell'}\frac{1}{2^{2\ell}}\sum_{E\in E_\ell} \mu(RE\triangle TE)\leq \sum_{t=1}^{N+1}\delta_t\partial(R,T_t)<\frac{\epsilon}{4}.$$
Finally, since by our choice of $\ell'$, $\frac{1}{2^{\ell'}}<\frac{\epsilon}{4}$, we obtain
\begin{multline*}
    \partial (R,T)=\sum_{\ell\in\N}\frac{1}{2^{2\ell}}\sum_{E\in E_\ell}\mu(RE\triangle TE)\\\
    =\sum_{\ell=1}^{\ell'}\frac{1}{2^{2\ell}}\sum_{E\in E_\ell} \mu(RE\triangle TE)+\sum_{\ell=\ell'+1}^\infty \frac{1}{2^{2\ell}}\sum_{E\in E_\ell}\mu(RE\triangle TE)\\
    \leq\sum_{\ell=1}^{\ell'}\frac{1}{2^{2\ell}}\sum_{E\in E_\ell} \mu(RE\triangle TE)+\frac{1}{2^{\ell'}}<\frac{\epsilon}{4}+\frac{\epsilon}{4}=\frac{\epsilon}{2}.
\end{multline*}
We are done.
\end{proof}
We now obtain \cref{0.IndependentPolysStepinResult} as a corollary of \cref{4.MainResultExistence} and \cref{5.MainInterpolationResult}.
\begin{thm}\label{6.IndependentPolysStepinResult}
Let $N\in\N$ and let $\lambda_1,...,\lambda_N\in [0,1]$.  For any asymptotically independent sequences $\phi_1,...,\phi_N:\N\rightarrow\Z$, the set
\begin{multline*}
    \mathcal O=\{T\in\text{Aut}([0,1],\mathcal B,\mu)\,|\,\exists (n_k)_{k\in\N}\in\N^\N_\infty\,\forall j\in\{1,...,N\}\,\forall A,B\in\mathcal B,\\
    \lim_{k\rightarrow\infty}\mu(A\cap T^{- \phi_j(n_k)}B)=(1-\lambda_j)\mu(A\cap B)+\lambda_j\mu(A)\mu(B)\}
\end{multline*}
is a dense $G_\delta$ set.
\end{thm}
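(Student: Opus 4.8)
The plan is to derive \cref{6.IndependentPolysStepinResult} by combining \cref{4.MainResultExistence} with \cref{5.MainInterpolationResult}, and then to verify separately that the (a priori larger) set $\mathcal O$ is $G_\delta$. First I would check that $\phi_1,\dots,\phi_N$ satisfy Condition C. By \cref{4.MainResultExistence} there is an increasing sequence $(n_k)_{k\in\N}$ in $\N$ such that for each $\vec\xi\in\{0,1\}^N$ there is a non-trivial weakly mixing Gaussian system $(\R^\Z,\mathcal A,\gamma_{\vec\xi},T_{\vec\xi})$ satisfying \eqref{4.MixingOrRigidExpression}. As in the proof of \cref{1.EquivalenceOftypesOfSystems}, each such system is measure-theoretically isomorphic to $([0,1],\mathcal B,\mu,S_{\vec\xi})$ for some $S_{\vec\xi}\in\text{Aut}([0,1],\mathcal B,\mu)$; weak mixing, and hence aperiodicity (see the footnote to \cref{0.MainResultExistence}), is preserved by this isomorphism, as is the limit \eqref{4.MixingOrRigidExpression}. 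Renaming $S_{\vec\xi}$ as $T_{\vec\xi}$, we conclude that $\phi_1,\dots,\phi_N$ satisfy Condition C with the sequence $(n_k)_{k\in\N}$.

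Next I would invoke \cref{5.MainInterpolationResult}: since Condition C holds, the set $\mathcal O(\phi_1,\dots,\phi_N)$ is a dense $G_\delta$ subset of $\text{Aut}([0,1],\mathcal B,\mu)$. Since $(n_k)_{k\in\N}$ is strictly increasing, for every $(k_\ell)_{\ell\in\N}\in\N^\N_\infty$ the subsequence $(n_{k_\ell})_{\ell\in\N}$ again lies in $\N^\N_\infty$; therefore $\mathcal O(\phi_1,\dots,\phi_N)\subseteq\mathcal O$, and in particular $\mathcal O$ is dense.

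It remains to show $\mathcal O$ is $G_\delta$, which I would do by imitating the corresponding step in the proof of \cref{5.MainInterpolationResult}. For $q,\ell,M\in\N$, writing $E(\ell)=\bigcup_{r=1}^\ell E_r$, set
\[
\mathcal U(q,\ell,M)=\bigcup_{m>M}\ \bigcap_{E,F\in E(\ell)}\ \bigcap_{j=1}^N\Bigl\{T\ \Big|\ \bigl|\mu(E\cap T^{-\phi_j(m)}F)-(1-\lambda_j)\mu(E\cap F)-\lambda_j\mu(E)\mu(F)\bigr|<\tfrac1q\Bigr\}.
\]
Each map $T\mapsto\mu(E\cap T^{-\phi_j(m)}F)$ is continuous, being the composition of $T\mapsto T^{-\phi_j(m)}$ (continuous, since $\text{Aut}([0,1],\mathcal B,\mu)$ is a topological group) with the continuous map $S\mapsto\mu(E\cap SF)$; hence each $\mathcal U(q,\ell,M)$ is open. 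One then checks that $\mathcal O=\bigcap_{q,\ell,M\in\N}\mathcal U(q,\ell,M)$: the inclusion $\subseteq$ is immediate from the definition of $\mathcal O$, while for $\supseteq$ one recursively selects $m_1<m_2<\cdots$ with $m_\ell>m_{\ell-1}$ witnessing membership in $\mathcal U(\ell,\ell,m_{\ell-1})$, and then applies the $\pi$-$\lambda$ argument of \cref{5.MainInterpolationResult} to promote convergence along $(\phi_j(m_\ell))_{\ell\in\N}$ from dyadic intervals to arbitrary $A,B\in\mathcal B$. Thus $\mathcal O$ is $G_\delta$, and being dense it is a dense $G_\delta$ set. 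The only delicate point is this last identification --- correctly encoding the existential quantifier over strictly increasing sequences appearing in the definition of $\mathcal O$ as a countable intersection of open sets; everything else is a direct application of the two main ingredients.
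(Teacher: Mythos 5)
Your proposal is correct and follows essentially the same route as the paper: density is obtained by combining \cref{4.MainResultExistence} (translated to $\text{Aut}([0,1],\mathcal B,\mu)$ via the isomorphism argument of \cref{1.EquivalenceOftypesOfSystems}, so that Condition C holds) with \cref{5.MainInterpolationResult}, while the $G_\delta$ property is verified separately by the same open-set encoding and $\pi$-$\lambda$ argument used in the proof of \cref{5.MainInterpolationResult}. Your explicit bookkeeping with the extra parameter $M$ to capture the existential quantifier over arbitrary strictly increasing sequences is exactly the ``argument similar to'' step the paper leaves to the reader.
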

\begin{proof}
 By an argument similar to the one used in the proof of \cref{5.MainInterpolationResult}, $\mathcal O$ is a $G_\delta$ set. Combining  \cref{4.MainResultExistence} and \cref{5.MainInterpolationResult}, we see that $\mathcal O$ contains a dense $G_\delta$ set. Hence, it is a dense $G_\delta$ set.
\end{proof}
\section{Families of non-asymptotically independent sequences for which Condition C holds}\label{Section7OtherFamilies}
In this section we will show that, as mentioned in the Introduction, Condition C in \cref{0.MainInterpolationResult} is satisfied by families of sequences which are not asymptotically independent. The following result, which also follows from  \cite[Theorem 3.11]{BKLUltrafilterPoly}, provides some examples of such families of sequences. (Our proof is different from that of \cite[Theorem 3.11]{BKLUltrafilterPoly}.)
\begin{thm}
Let $N\geq 2$, let $A_1,...,A_{2^N-2}$ be an enumeration of the non-empty proper subsets of $\{1,...,N\}$, and let $p_1,...,p_{2^N-2}\in\N$ be distinct prime numbers. For each $j\in\{1,...,N\}$, set
\begin{equation}\label{A.Defn_q_j}
q_j=\prod_{\{n\in\{1,...,2^N-2\}\,|\,j\in A_n\}}p_n.    
\end{equation}
and put $\phi_j(k)=q_jk$, $k\in\N$. Then there exists an increasing sequence $(n_k)_{k\in\N}$ in $\N$ such that for any $\vec \xi=(\xi_1,...,\xi_N)\in\{0,1\}^N$, there exists a non-trivial weakly mixing Gaussian system  $(\R^\Z,\mathcal A,\gamma_{\vec \xi},T_{\vec \xi})$ with the property that for each $j\in\{1,...,N\}$ and any $A,B\in\mathcal A$,
\begin{equation}\label{A.TheLimit}
\lim_{k\rightarrow\infty}\gamma_{\vec\xi}(A\cap T_{\vec \xi}^{- \phi_j(n_k) }B)=(1-\xi_j)\gamma_{\vec \xi}(A\cap B)+\xi_j\gamma_{\vec \xi}(A)\gamma_{\vec \xi}(B).
\end{equation}
\end{thm}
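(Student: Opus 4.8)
The plan is to follow the proof of \cref{3.IndependentPoly} almost verbatim, replacing its ``polynomial'' digit scheme by one adapted to the divisibility pattern of the $q_j$'s. By \cref{1.NotionsOfMixing} and \cref{1.TheWienerConsequence} it suffices to produce an increasing sequence $(n_k)_{k\in\N}$ in $\N$ and, for each $\vec\xi=(\xi_1,\dots,\xi_N)\in\{0,1\}^N$, a continuous symmetric Borel probability measure $\sigma_{\vec\xi}$ on $\mathbb T$ such that for every $j\in\{1,\dots,N\}$ and every $m\in\Z$,
\[
\lim_{k\to\infty}\int_{\mathbb T}e^{2\pi i(q_jn_k+m)x}\,\text{d}\sigma_{\vec\xi}(x)=(1-\xi_j)\int_{\mathbb T}e^{2\pi imx}\,\text{d}\sigma_{\vec\xi}(x);
\]
the Gaussian system associated with $\sigma_{\vec\xi}$ is then non-trivial (as $\sigma_{\vec\xi}\neq0$) and weakly mixing (as $\sigma_{\vec\xi}$ is continuous), and \eqref{A.TheLimit} is precisely the statement that this system is rigid along $(q_jn_k)_k$ when $\xi_j=0$ and has the mixing property along $(q_jn_k)_k$ when $\xi_j=1$.

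First I would fix the ambient data. Let $P=p_1\cdots p_{2^N-2}$ and $Q=\max_{1\le j\le N}q_j$, let $\mathbb P$ be the uniform product measure on $\{0,\dots,P-1\}^{\N}$, and choose $(n_k)_{k\in\N}$ increasing with $Pn_k\mid n_{k+1}$ for every $k$ and $n_{k+1}/n_k\to\infty$ (for instance $n_1=P$, $n_{k+1}=kPQ\,n_k$, which in particular forces $n_{k+1}\ge 2Pn_k$); this one sequence will serve every $\vec\xi$. Now fix $\vec\xi$, put $S^{c}=\{j:\xi_j=0\}$, and note that, since $p_n\mid q_j$ if and only if $j\in A_n$, a squarefree divisor $r=\prod_{n\in E}p_n$ of $P$ satisfies $r\mid q_j\iff j\in\bigcap_{n\in E}A_n$. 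As every subset of $\{1,\dots,N\}$ is an intersection of a (possibly empty) subfamily of the $A_n$ — take $E=\{m_0\}$ with $A_{m_0}=S^{c}$ when $S^{c}$ is proper and nonempty, $E=\emptyset$ (so $r=1$) when $S^{c}=\{1,\dots,N\}$, and $E=\{1,\dots,2^N-2\}$ (so $r=P$, using $\bigcap_nA_n=\emptyset$ since $N\ge2$) when $S^{c}=\emptyset$ — we may fix $r=r^{(\vec\xi)}\mid P$ with $r\mid q_j\iff\xi_j=0$. Define $g_{\vec\xi}(\omega)=\tfrac1r\sum_{t\ge1}\tfrac{\omega(t)}{n_t}\bmod 1$, let $\rho_{\vec\xi}=\mathbb P\circ g_{\vec\xi}^{-1}$ and (writing $\widehat{\rho}(m):=\int_{\mathbb T}e^{2\pi imx}\,\text{d}\rho(x)$) let $\sigma_{\vec\xi}$ be the distribution of $g_{\vec\xi}(\omega_1)-g_{\vec\xi}(\omega_2)$ under $\mathbb P\times\mathbb P$, so that $\widehat{\sigma_{\vec\xi}}(m)=|\widehat{\rho_{\vec\xi}}(m)|^{2}\ge0$ is real with $\widehat{\sigma_{\vec\xi}}(0)=1$; thus $\sigma_{\vec\xi}$ is a symmetric probability measure. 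Because $n_{t+1}\ge2Pn_t$, the map $\omega\mapsto\sum_t\omega(t)/n_t$ has no ``carries'' and is injective, so each fiber of $g_{\vec\xi}$ is finite; hence $\rho_{\vec\xi}$, and therefore $\sigma_{\vec\xi}$, is non-atomic.

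Next I would run the key estimate exactly as in \cref{3.IndependentPoly}. Fix $j$ and $m$. For $t<k$ the term $\tfrac{q_jn_k}{rn_t}\omega(t)$ of $q_jn_kg_{\vec\xi}(\omega)$ is an integer (as $Pn_t\mid n_k$), and the tail over $t>k$ has absolute value at most $Qn_k(P-1)\sum_{t>k}n_t^{-1}=O(QPn_k/n_{k+1})\to0$ uniformly in $\omega$; hence $q_jn_kg_{\vec\xi}(\omega)\equiv\tfrac{q_j}{r}\omega(k)\pmod 1$ with an error that is $o(1)$ uniformly in $\omega$. Since the $k$-th term of $g_{\vec\xi}(\omega)$ itself is $O(1/n_k)$, one may discard it and use the independence (under $\mathbb P$) of $\omega(k)$ from the remaining coordinates to obtain
\[
\widehat{\rho_{\vec\xi}}(q_jn_k+m)=\Big(\tfrac1P\sum_{a=0}^{P-1}e^{2\pi iq_ja/r}\Big)\widehat{\rho_{\vec\xi}}(m)+o(1).
\]
The geometric sum equals $1$ when $r\mid q_j$ and, because $r\mid P$, equals $0$ otherwise, so by our choice of $r$ it equals $1-\xi_j$. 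Squaring (and using $(1-\xi_j)^2=1-\xi_j$) gives $\widehat{\sigma_{\vec\xi}}(q_jn_k+m)\to(1-\xi_j)\widehat{\sigma_{\vec\xi}}(m)$, which is the displayed limit above; \cref{1.NotionsOfMixing} then transfers this to the Gaussian system associated with $\sigma_{\vec\xi}$, establishing \eqref{A.TheLimit}.

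The hard part here is essentially only bookkeeping: arranging a single sequence $(n_k)$ so that the lower-order terms vanish and the tails are negligible simultaneously for all of $q_1,\dots,q_N$, and checking that the extreme targets $\vec\xi=\vec0$ (all-rigid, $r=1$) and $\vec\xi=\vec1$ (all-mixing, $r=P$) do not collapse $\sigma_{\vec\xi}$ — they do not, since the alphabet $\{0,\dots,P-1\}$ still has $P\ge2$ symbols. Everything else is lighter than in \cref{3.IndependentPoly}: there is no polynomial algebra, and the relevant digit map is genuinely injective, so continuity of $\sigma_{\vec\xi}$ is immediate rather than requiring the finite-fiber counting argument used there.
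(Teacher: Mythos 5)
Your proposal is correct, and it follows the same overall strategy as the paper's proof: reduce, via \cref{1.NotionsOfMixing} and \cref{1.TheWienerConsequence}, to constructing for each $\vec\xi$ a continuous symmetric probability measure on $\mathbb T$ realized as the push-forward of a random ``digit expansion'' along a rapidly growing, highly divisible sequence $(n_k)$, and then evaluate a character sum over the digit alphabet using the arithmetic fact that $p_n\mid q_j$ if and only if $j\in A_n$. The differences are in the bookkeeping, and they are real simplifications. The paper attaches to each $\vec\xi$ a single modulus and a matching alphabet: a prime $p_n$ with $A_n=\{j:\xi_j=0\}$, the convention $p_0=1$ for the all-rigid vector, and an auxiliary prime $M>\max_n p_n$ for the all-mixing vector; the price is that continuity of $\sigma_{\vec\xi}$ is not automatic (when $p_n\mid q_j$ the quantity $q_j\omega(k)/p_n$ does not distinguish digits), which is why the paper runs the estimate at scale $1/(p_nM)$ and uses coprimality of $q_j$ and $M$ before invoking the finite-fiber argument of \cref{3.IndependentPoly}. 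You instead keep one alphabet $\{0,\dots,P-1\}$ and one probability space for all $\vec\xi$, and encode $\vec\xi$ through a squarefree modulus $r\mid P$ with $r\mid q_j\iff\xi_j=0$ (a single $p_{m_0}$ in the intermediate cases, $r=1$ and $r=P$ at the extremes, the latter using $\bigcap_nA_n=\emptyset$ for $N\ge2$); since your growth condition $n_{t+1}\ge 2Pn_t$ makes the digit map genuinely injective, atomlessness is immediate and no auxiliary prime is needed. Your factorization of the integral via independence of $\omega(k)$ from the other coordinates is the same computation the paper performs by citing mixing of the shift, and the character-sum evaluation $\frac1P\sum_{a=0}^{P-1}e^{2\pi iq_ja/r}=\mathbbm 1_{r\mid q_j}$ (valid because $r\mid P$, even for composite $r$) correctly yields $1-\xi_j$. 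In short: the paper's construction is the minimal one, with per-$\vec\xi$ alphabets tailored to a prime, while yours trades a larger alphabet for a cleaner continuity argument and a unified treatment of the extreme vectors; both are complete proofs of the statement.
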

\begin{proof}
 Let $A_0=\{1,...,N\}$, let $A_{2^N-1}=\emptyset$, and let $M$ be the least prime number with the property that for each $n\in\{1,...,2^N-2\}$, $M>p_n$. Put $p_{2^N-1}=M$ and define the sequence $(n_k)_{k\in\N}$  by 
 $$n_k=(\prod_{r=1}^{2^N-1}p_r)^{2k}k!,\,k\in\N.$$
For each $n\in\{0,...,2^N-1\}$, define $\vec \xi_n=(\xi_1^{(n)},...,\xi_N^{(n)})\in\{0,1\}^N$ by
$$\xi_j^{(n)}=1-\mathbbm 1_{A_n}(j),\, j\in\{1,...,N\}.$$
(Observe that $\{\vec \xi_n\,|\,n\in\{0,...,2^N-1\}\}=\{0,1\}^N$.)\\

Fix $n\in\{0,1,...,2^N-1\}$, put $p_0=1$, and let $c_n=\max\{p_n-1,1\}$. Consider the product space
$$X_n=\{0,...,c_n\}^\N$$
and let $\mathbb P_n$ be the Borel probability measure on $X_n$ defined by the infinite product of the  normalized counting measure on $\{0,...,c_n\}$. Let $f_n:X_n\times X_n\rightarrow \mathbb T$ be defined by 
$$f_n(\omega_1,\omega_2)=\sum_{t=1}^\infty\frac{1}{n_t}\frac{\omega_1(t)-\omega_2(t)}{p_n}\,\text{mod}\,1.$$
Clearly $f_n$ is continuous.\\
Set the probability measure $\sigma_{\vec\xi_n}$ on $\mathbb T$ to equal $(\mathbbm P_n\times\mathbb P_n)\circ f_n^{-1}$. Note that for each $k\in\Z$,
\begin{multline*}
\int_\mathbb T e^{2\pi ikx}\text{d}\sigma_{\vec \xi_n}(x)=\int_{X_n}\int_{X_n}e^{2\pi i k(\sum_{t=1}^\infty\frac{1}{n_t}\frac{\omega_1(t)-\omega_2(t)}{p_n})}\text{d}\mathbb P_n(\omega_1)\text{d}\mathbb P_n(\omega_2)\\
=|\int_{X_n}e^{2\pi ik(\sum_{t=1}^\infty\frac{1}{n_t}\frac{\omega(t)}{p_n})}\text{d}\mathbb P_n(\omega)|^2.
\end{multline*}
It follows that $\sigma_{\vec \xi_n}$ is a (non-zero, positive) symmetric probability measure. We claim that the non-trivial Gaussian system $(\R^\Z,\mathcal A,\gamma_{\vec \xi_n}, T_{\vec\xi_n})$ associated with $\sigma_{\vec\xi_n}$ is weakly mixing and satisfies \eqref{A.TheLimit}.
By \cref{1.TheWienerConsequence} and \cref{1.NotionsOfMixing}, it suffices to show that (i) $\sigma_{\vec\xi_n}$ is continuous and (ii) that for each $j\in\{1,...,N\}$ and any $m\in\Z$, 
\begin{equation}\label{A.TheSpectralLimit}
\lim_{k\rightarrow\infty}\int_\mathbb T e^{2\pi i(\phi_j(n_k)+m)x}\text{d}\sigma_{\vec\xi_n}(x)=(1-\xi_j^{(n)})\int_\mathbb T e^{2\pi imx}\text{d}\sigma_{\vec\xi_n}(x).
\end{equation}

\underline{(i)}: We will now show that $\sigma_{\vec\xi_n}$ is continuous. For this, let $j\in\{1,...,N\}$ and note that
\begin{multline}\label{A.UniformLimit.1}
\lim_{k\rightarrow\infty}\|\phi_j(n_k)\sum_{t=1}^\infty\frac{1}{n_t}\frac{\omega(t)}{p_nM}-\frac{q_j\omega(k)}{p_nM}\|=\lim_{k\rightarrow\infty}\|\sum_{t=1}^\infty\frac{n_k}{n_t}\frac{q_j\omega(t)}{p_nM}-\frac{q_j\omega(k)}{p_nM}\|\\
=\lim_{k\rightarrow\infty}\|\sum_{t=1}^\infty\frac{k!}{t!}\frac{(\prod_{r=1}^{2^N-1}p_r)^{2(k-t)}q_j\omega(t)}{p_nM}-\frac{q_j\omega(k)}{p_nM}\|=0
\end{multline}
uniformly in $\omega\in X_n$. Thus,
\begin{equation}\label{A.UniformLimit}
\lim_{k\rightarrow\infty}\|\phi_j(n_k)\sum_{t=1}^\infty\frac{1}{n_t}\frac{\omega_1(t)}{p_nM}-\phi_j(n_k)\sum_{t=1}^\infty\frac{1}{n_t}\frac{\omega_2(t)}{p_nM}\|-\|\frac{q_j\omega_1(k)}{p_nM}-\frac{q_j\omega_2(k)}{p_nM}\|=0
\end{equation}
uniformly in $(\omega_1,\omega_2)\in X_n\times X_n$.\\
By \eqref{A.Defn_q_j},  $q_j$ and $M$ are relatively prime and hence for any $a,b\in\{0,...,c_n\}$,
\begin{equation}\label{A.PrimeSubgroupsCongruenceMode1}
\frac{q_ja}{p_nM}\equiv\frac{q_jb}{p_nM}\,\text{mod}\,1\text{ if and only if $a=b$.}
\end{equation}
The continuity of $\sigma_{\vec \xi_n}$ now follows from \eqref{A.UniformLimit} and \eqref{A.PrimeSubgroupsCongruenceMode1} by noting that $\mathbb P_n$ is an atomless measure and arguing as in the proof of \cref{3.IndependentPoly}.\\

\underline{(ii)}: By \eqref{A.UniformLimit.1}, for any $j\in\{1,...,N\}$, 
\begin{equation*}
\lim_{k\rightarrow\infty}\|\phi_j(n_k)\sum_{t=1}^\infty\frac{1}{n_t}\frac{\omega(t)}{p_n}-\frac{q_j\omega(k)}{p_n}\|
=
\lim_{k\rightarrow\infty}|M|\|\phi_j(n_k)\sum_{t=1}^\infty\frac{1}{n_t}\frac{\omega(t)}{p_nM}-\frac{q_j(\omega(k)}{p_nM}\|=0
\end{equation*}
uniformly on $\omega\in X_n$. So for each $m\in\Z$,
\begin{multline*}
\lim_{k\rightarrow\infty}\int_\mathbb T e^{2\pi i (\phi_j(n_k)+m)x}\text{d}\sigma_{\vec \xi_n}(x)\\
=\lim_{k\rightarrow\infty} |\int_{X_n}e^{2\pi i(\phi_j(n_k)+m)(\sum_{t=1}^\infty\frac{1}{n_t}\frac{\omega(t)}{p_n})}\text{d}\mathbb P_n(\omega)|^2
=\lim_{k\rightarrow\infty} |\int_{X_n}e^{2\pi i\frac{q_j\omega(k)}{p_n}}e^{2\pi i m(\sum_{t=1}^\infty\frac{1}{n_t}\frac{\omega(t)}{p_n})}\text{d}\mathbb P_n(\omega)|^2\\
=|\frac{1}{c_n+1}\sum_{r=0}^{c_n}e^{2\pi i\frac{q_jr}{p_n}}|^2|\int_{X_n}e^{2\pi i m(\sum_{t=1}^\infty\frac{1}{n_t}\frac{\omega(t)}{p_n})}\text{d}\mathbb P_n(\omega)|^2
=|\frac{1}{c_n+1}\sum_{r=0}^{c_n}e^{2\pi i\frac{q_jr}{p_n}}|^2\int_\mathbb T e^{2\pi imx}\text{d}\sigma_{\vec \xi_n}(x).
\end{multline*}
By \eqref{A.Defn_q_j}, for each $j\in\{1,...,N\}$,
$$|\frac{1}{c_n+1}\sum_{r=0}^{c_n}e^{2\pi i\frac{q_jr}{p_n}}|^2=\mathbbm 1_{A_n}(j)=1-\xi_j^{(n)},$$
which implies that \eqref{A.TheSpectralLimit} holds.
\end{proof}

\textbf{Acknowledgments:} The author would like to thank Professor Vitaly Bergelson for the  question which motivated this paper and his valuable input during the preparation of the manuscript. The author also thanks the anonymous  referee to whom the much clearer and more succinct presentation of the material in Section 2 is owed.

\bibliographystyle{abbrvnat}
\bibliography{Bib.bib}

\end{document}